\def\Xint#1{\mathchoice
{\XXint\displaystyle\textstyle{#1}}%
{\XXint\textstyle\scriptstyle{#1}}%
{\XXint\scriptstyle\scriptscriptstyle{#1}}%
{\XXint\scriptscriptstyle%
\scriptscriptstyle{#1}}%
\!\int}
\def\XXint#1#2#3{{\setbox0=\hbox{$#1{#2#3}{%
\int}$ }
\vcenter{\hbox{$#2#3$ }}\kern-.6\wd0}}
\def\barint{\, \Xint -} 
\def\bariint{\barint_{} \kern-.4em \barint}
\def\bariiint{\bariint_{} \kern-.4em \barint}
\renewcommand{\iint}{\int_{}\kern-.34em \int} 
\renewcommand{\iiint}{\iint_{}\kern-.34em \int} 
\DeclareMathAlphabet{\mathcal}{OMS}{cmsy}{m}{n}
\newcommand{\R}{\mathbb{R}}
\newcommand{\N}{\mathbb{N}}
\newcommand{\Z}{\mathbb{Z}}
\newcommand{\supp}{\mathop{\mathrm{supp}}}
\newcommand{\de}{\delta}
\newcommand{\ga}{{\gamma}}
\newcommand{\lam}{\lambda}
\newcommand{\La}{\Lambda}
\newcommand{\Om}{{\Omega}}
\newcommand{\td}{\tilde}
\renewcommand{\th}{\theta}
\newcommand{\nb}{\nabla}
\newcommand{\p}{\partial}
\newcommand{\la}{\langle}
\newcommand{\ra}{\rangle}
\newcommand{\les}{\lesssim}
\newcommand{\ges}{\gtrsim}
\newcommand{\norm}[1]{\lVert #1 \rVert}
\renewcommand{\:}{\colon}
\newcommand{\wto}{\rightharpoonup} 
\newcommand{\wstar}{\overset{\ast}{\rightharpoonup}}
\newcommand{\into}{\hookrightarrow}
\newcommand{\uloc}{\mathrm{uloc}}
\newcommand{\loc}{{\rm loc}}
\newcommand{\lin}{{\rm lin}}
\let\div\relax
\DeclareMathOperator{\div}{div}
\DeclareMathOperator{\pv}{pv}
\let\tilde\relas
\newcommand{\tilde}[1]{\widetilde{#1}}
\DeclareMathOperator*{\esssup}{ess\,sup}
\newcommand{\I}{{\infty}}  
\newcommand{\BMO}{{\rm BMO}}
\newcommand{\osc}{{\rm osc}}
\newcommand{\blue}[1]{{\color{blue} \bf #1}}
\newcommand{\cmz}[1]{{\color{magenta} #1}}
\newcommand{\EQ}[1]{\begin{equation}\begin{split} #1 \end{split}\end{equation}}
\newcommand{\EQN}[1]{\begin{equation*}\begin{split} #1 \end{split}\end{equation*}} 
\newtheorem{theorem}{Theorem}[section]
\newtheorem{lemma}[theorem]{Lemma}
\newtheorem{corollary}[theorem]{Corollary}
\newtheorem{proposition}[theorem]{Proposition}
\theoremstyle{definition}
\theoremstyle{remark}
\newtheorem{remark}[theorem]{Remark}
\numberwithin{equation}{section}
\begin{document}

\title[Non-decaying solutions to critical SQG]{Non-decaying solutions to the critical surface quasi-geostrophic equations with symmetries}
\author{Dallas Albritton}
\address{Courant Institute of Mathematical Sciences, New York University, New York, NY 10012}
\email{daa399@cims.nyu.edu}

\author{Zachary Bradshaw}
\address{Department of Mathematics, University of Arkansas, Fayetteville, AR 72701}
\email{zb002@uark.edu}

\subjclass[2020]{Primary 35Q30}

\date{\today}

\begin{abstract}
We develop a theory of self-similar solutions to the critical surface quasi-geostrophic equations. We construct self-similar solutions for arbitrarily large data in various regularity classes and demonstrate, in the small data regime, uniqueness and global asymptotic stability. These solutions are non-decaying as $|x| \to +\infty$, which leads to ambiguity in the velocity $\vec{R}^\perp \theta$. This ambiguity is corrected by imposing $m$-fold rotational symmetry.  The self-similar solutions exhibited here lie just beyond the known well-posedness theory and are expected to shed light on potential non-uniqueness, due to symmetry-breaking bifurcations, in analogy with work~\cite{jiasverakillposed,guillodsverak} on the Navier-Stokes equations.
\end{abstract}

\maketitle

\setcounter{tocdepth}{1}
\tableofcontents

\section{Introduction}

 The uniqueness of Leray's weak solutions~\cite{leray} of the three-dimensional Navier-Stokes equations
\begin{equation}
\label{eq:NS}
\tag{NS}
\begin{aligned}
	\p_t u + u \cdot \nabla u - \Delta u + \nabla p = 0, \quad \div u = 0,
	\end{aligned}
\end{equation}
is a well known open problem. In~\cite{jiasverakselfsim,jiasverakillposed}, Jia and {\v S}ver{\'a}k proposed a non-uniqueness scenario based on \emph{forward self-similar solutions} $u \: \R^3 \times \R_+ \to \R^3$, which are invariant under the scaling symmetry
\begin{equation}
\label{eq:NSscaling}
	u \to \lambda u(\lambda x,\lambda^2 t), \quad p \to \lambda^2 p(\lambda x,\lambda^2 t).
\end{equation}
These solutions, which lie just beyond the known perturbation theory, correspond to steady states of the Navier-Stokes equations in similarity variables $u(x,t) = U(y)/\sqrt{t}$, $y = x/\sqrt{t}$:  
\begin{equation}
	\label{eq:NSinselfsim}
	- \Delta U + U \cdot \nabla U  -(1 + y \cdot \nabla) U/2 + \nabla P = 0, \quad \div U = 0.
\end{equation}
Based on the analogy between~\eqref{eq:NSinselfsim} and the steady Navier-Stokes equations, which naturally exhibit bifurcations, Jia and {\v S}ver{\'a}k conjectured that similar bifurcations occur within the class of steady solutions of~\eqref{eq:NSinselfsim} and cause a loss of uniqueness in the Cauchy problem for~\eqref{eq:NS}. Convincing numerical evidence for these bifurcations was recently discovered by Guillod and {\v S}ver{\'a}k~\cite{guillodsverak}.  \\


With the above paradigm in mind, we explore connections between self-similar solutions and potential non-uniqueness in the \emph{critical surface quasi-geostrophic equations}:
\begin{equation}
	\label{eq:sqg}
	\tag{SQG}
\partial_t \theta + \vec{R}^\perp \theta \cdot\nabla \theta + \Lambda \theta  = 0,
\end{equation}
where $\theta \: \R^2 \times \R_+ \to \R$ is the surface buoyancy, $\Lambda = (- \Delta)^{1/2}$ is the half-Laplacian, and $\vec{R}$ is the vector of Riesz transforms. 
The above equation arises in geophysical fluid dynamics as a model of the large-scale motion of the ocean when the quasi-geostrophic potential vorticity is uniform. The dissipation term $\Lambda \theta$ captures the Ekman friction (see~\cite{lapeyre2017surface} for a survey on SQG from a physics perspective). This model and its cousins also arise in attempts to understand potential singularity formation in the Navier-Stokes and Euler equations~\cite{ConstantinMajdaTabak}, since $\omega = \nabla^\perp \theta$ and $u = \vec{R}^\perp \theta$ satisfy
\begin{equation}
	\p_t \omega + u \cdot \nabla \omega - \omega \cdot \nabla u + \Lambda \omega = 0,
\end{equation}
which is analogous to the vector transport equation satisfied by the vorticity. 

Unlike~\eqref{eq:NS} in dimension three, \eqref{eq:sqg} is \emph{critical} in the sense that its strongest known monotone quantity, the $L^\infty$ norm, is invariant under the scaling symmetry
\begin{equation}
	\label{eq:SQGscaling}
	\theta \to \theta(\lambda x,\lambda t).
\end{equation}
One of our motivations is to better understand how criticality impacts the non-uniqueness program.
By now, it is well understood that solutions are globally well-posed in the smooth category, with many proofs, including~\cite{KiselevNazarovVolbergInventiones2007} (`moduli of continuity'),~\cite{CV} (inspired by De Giorgi),~\cite{VariationsThemeCaffVass2009}, ~\cite{ConstVicolGAFA} (`nonlinear maximum principle'),~\cite{MaekawaMiura-drift} (inspired by Nash), and \cite{ConstTarfVicolCMP}. However, self-similar (i.e., $0$-homogeneous) initial data is discontinuous at the origin (unless $\theta_0 \equiv \text{const.})$ and does not belong to the above well-posedness theory. 
Our second motivation is to better understand difficulties concerning self-similar solutions of the two-dimensional Navier-Stokes system, whose vorticity formulation is
\begin{equation}
	\label{eq:NS2d}
	\tag{NS2}
	\p_t \omega + u \cdot \nabla \omega - \Delta \omega = 0, \quad u = \nabla^\perp \psi, \quad \Delta \psi = \omega.
\end{equation}
While equations~\eqref{eq:sqg} and~\eqref{eq:NS2d} are dissipative active scalars and share many features, the existence of large self-similar solutions of~\eqref{eq:NS2d} appears to be more difficult. We will discuss the differences in detail below.

\subsection{Main results}

In developing a theory of non-decaying solutions of~\eqref{eq:sqg}, we immediately encounter an essential issue at the level of making sense of the equations: The Riesz transform of a non-decaying function is generally only well-defined \emph{up to constants} (even if the function is smooth). One may view this as an issue of gauge invariance due to the generalized Galilean boosts $x \to x + y(t)$. When $\theta \in L^2$, there is a single representative of $\vec{R}^\perp \theta$ belonging to $L^2$, and this choice fixes the gauge. For non-decaying functions, there may no longer be a natural choice unless one imposes additional structure, such as periodicity. Our approach is to impose symmetries that automatically determine a representative of the Riesz transform and, hence, fix the gauge.

\subsubsection*{Symmetries} Let $g \in O(1) = \{ g \in \R^{2\times2} : gg^T = 1\}$ act on functions $\theta \: \R^2 \to \R$ and vector fields $\vec{v} \: \R^2 \to \R^2$ according to the transformations
\begin{equation}
	 (g \cdot \theta)(x) = (\det g) \theta(g^{-1} x), \quad (g \cdot \vec{v})(x) = (\det g) g \vec{v}(g^{-1} x)
\end{equation}
where we omit $\cdot$ when $g$ acts on $\R^2$. In this convention, reflections act on $\theta$ by odd reflection. It is simple to formally verify that if $\theta$ is a solution of~\eqref{eq:sqg}, then $g \cdot \theta$ is also a solution of~\eqref{eq:sqg}. 

We consider the symmetry groups
\begin{equation}
	\label{eq:symmetrygroups}
	G = \la R_{\varphi_0} \ra, \la R_{\varphi_0}, S_{y=0} \ra, SO(1)
\end{equation}
where $\varphi_0 \in 2\pi( \mathbb{Q} \setminus \Z )$, $R_{\varphi}$ denotes counter-clockwise rotation by angle $\varphi$, $S_{y=0}$ denotes reflection across the $x$-axis, and $\la \cdot \ra$ denotes generation by. The above groups correspond to $m$-fold rotational symmetry, $m$-fold rotational and odd-in-period symmetry, and radial symmetry. They are precisely those proper closed subgroups of $O(1)$ that do not stabilize any non-zero vector in $\R^2$. If $\theta(\cdot,t) \in L^\infty$ is invariant under $r = R_{\varphi_0}$, then, for each ball $B_N$, $N > 0$,
 \begin{equation}
 \label{eq:quickrotationcomputation}
	r \int_{B_{N}} (\vec{R}^\perp \theta)(x,t) \, dx = \int_{B_{N}} r \cdot \vec{R}^\perp \theta \, dx = \int_{B_{N}} \vec{R}^\perp (r \cdot \theta) \, dx = \int_{B_{N}} \vec{R}^\perp \theta \, dx.
 \end{equation}
 Hence, each term in~\eqref{eq:quickrotationcomputation} vanishes. The symmetry assumption is considerably stronger than fixing an arbitrary gauge because it automatically controls the mean drift at every scale. \\ 

In the sequel, $G$ is a fixed symmetry group from~\eqref{eq:symmetrygroups}. For simplicity, we often write \emph{symmetric}, rather than $G$-symmetric, to mean invariant under $G$.

We consider a generalization of self-similarity known as \emph{discrete self-similarity}, i.e.,  invariance under~\eqref{eq:SQGscaling} for a specific scaling factor $\lambda > 1$. We write $\lambda$-DSS to mean $\lambda$-discretely self-similar. Plainly, every self-similar solution is $\lambda$-DSS for all $\lambda>1$.\\

We now present our main results.

\begin{theorem}[Bounded solutions]
\label{thm:boundedsolutions}
Let $M > 0$ and $\theta_0 \in L^\infty(\R^2)$ be symmetric with $\norm{\theta_0}_{L^\infty(\R^2)} \leq M$.

(Existence) There exists a bounded, smooth, symmetric solution of~\eqref{eq:sqg} on $\R^2 \times (0,+\infty)$ satisfying $\theta(\cdot,t) \wstar \theta_0$ in $L^\infty(\R^2)$ as $t \to 0^+$.

(Self-similarity) The solution $\theta$ may be chosen to satisfy the following property: If $\lambda > 1$ and $\theta_0$ is $\lambda$-DSS, then $\theta$ is $\lambda$-DSS. In particular, $\theta$ may be chosen to be self-similar if $\theta_0$ is self-similar.

(\emph{A priori} smoothing estimates) Any bounded, smooth, symmetric solution with initial data $\theta_0$ satisfies the maximum principle
\begin{equation}
	\label{eq:maxprinciple}
	\norm{\theta(\cdot,t)}_{L^\infty(\R^2)} \leq \norm{\theta_0}_{L^\infty(\R^2)},
\end{equation}
the spatial regularity estimates
\begin{equation}
	\sup_{t > 0} t^\ell \norm{\nabla^\ell_x \theta(\cdot,t)}_{L^\infty(\R^2)} \leq C(\ell,M),
\end{equation}
for all integers $\ell \geq 0$, and the spacetime regularity estimates
\begin{equation}
	\label{eq:spacetimeregests}
	\norm{\p_t^k \nabla^\ell_x \theta(\cdot,t)}_{L^\infty_{t,x}(K)} \leq C(k,\ell,M,K)
\end{equation}
 for all compact $K = K_1 \times K_2 \subset \R^2 \times \R_+$ and integers $k, \ell \geq 0$.

(Weak-$\ast$ stability) If $\theta_0^{(k)} \wstar \theta_0$ in $L^\infty(\R^2)$ and $\theta^{(k)}$ are corresponding bounded, smooth, symmetric solutions, then there exists a subsequence such that $\theta^{(k)} \to \theta$ in $C^\infty_\loc(\R^2 \times \R_+)$, where $\theta$ is a bounded, smooth, symmetric solution with initial data $\theta_0$.
\end{theorem}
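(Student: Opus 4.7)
The plan is to construct the solution via an approximation scheme compatible with the symmetry group $G$. Let $\chi \in C^\infty_c(\R^2)$ be a radial bump with $\chi \equiv 1$ on $B_1$, set $\chi_R(x) = \chi(x/R)$, and define $\theta_0^{(R)} = \chi_R \theta_0$. These truncations are $G$-symmetric (because $\chi_R$ is radial), satisfy $\norm{\theta_0^{(R)}}_{L^\infty} \le M$, and lie in $L^\infty \cap L^2$. For such data the classical critical SQG theory (e.g., Caffarelli-Vasseur or Kiselev-Nazarov-Volberg) yields a unique global smooth solution $\theta^{(R)}$, which by uniqueness inherits the $G$-symmetry and therefore has a canonical representative of $\vec{R}^\perp \theta^{(R)}$ in the sense of~\eqref{eq:quickrotationcomputation}.

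Next I would establish a priori estimates uniform in $R$. The maximum principle~\eqref{eq:maxprinciple} follows from the C\'ordoba-C\'ordoba pointwise inequality for $\Lambda$. For the spatial smoothing, the key observation is that $G$-symmetry permits, at each scale $r$ and center $x_0$, a near/far decomposition of the drift
\begin{equation*}
\vec{R}^\perp \theta(x) = \vec{R}^\perp(\eta_{x_0,r}\theta)(x) + \vec{R}^\perp\!\bigl((1-\eta_{x_0,r})\theta\bigr)(x),
\end{equation*}
where $\eta_{x_0,r}$ is a smooth cutoff at scale $r$ around $x_0$, and in which the far piece is smooth with all derivatives (in a slightly smaller ball) bounded by $\norm{\theta}_{L^\infty}$ alone, thanks to the cancellation from averaging over the $G$-orbits as in~\eqref{eq:quickrotationcomputation}. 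This reduces the nonlinear term to essentially its classical form, so the moduli-of-continuity or nonlinear maximum principle arguments supply $t^\ell \norm{\nabla^\ell_x \theta^{(R)}(t)}_{L^\infty} \le C(\ell, M)$. Bootstrapping the equation then furnishes the spacetime bounds~\eqref{eq:spacetimeregests}.

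With uniform bounds in hand, a diagonal Arzel\`a-Ascoli argument extracts a subsequence $\theta^{(R_k)} \to \theta$ in $C^\infty_\loc(\R^2 \times \R_+)$, and continuity of the symmetric Riesz transform under $C^\infty_\loc$-convergence of uniformly bounded $G$-symmetric functions guarantees that $\theta$ solves~\eqref{eq:sqg} classically on $\R^2 \times \R_+$. Equicontinuity in $t$ inherited from~\eqref{eq:spacetimeregests}, together with the uniform $L^\infty$ bound, yields $\theta(\cdot,t) \wstar \theta_0$ as $t \to 0^+$ by a standard test function argument. The weak-$\ast$ stability statement reduces to the same compactness argument applied directly to the given sequence $\theta^{(k)}$, requiring only the uniform a priori estimates. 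For the DSS assertion, if $\theta_0$ is $\lambda$-DSS I would restrict to DSS-compatible scales $R_k = \lambda^k R_0$: scaling invariance of~\eqref{eq:sqg} together with uniqueness in the approximation class forces $\theta^{(\lambda R)}(\lambda x,\lambda t) = \theta^{(R)}(x,t)$, so the DSS structure is preserved along the subsequence and passes to the limit.

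I expect the main obstacle to be the uniform control of the Riesz transform of non-decaying symmetric functions along the approximation. Specifically, one must show that the canonical symmetric PV representative of $\vec{R}^\perp \theta^{(R)}$ is stable as $R\to\infty$ and that the at-most-linear growth of the far-field contribution enters the nonlinearity only through its gradient (which is pointwise controlled by $\norm{\theta}_{L^\infty}$ and hence harmless). This is precisely where the symmetry hypothesis does essential work, and where the bulk of the technical machinery must be developed before the scheme above can be executed rigorously.
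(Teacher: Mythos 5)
Your scheme runs into a genuine gap at the two places where the paper has to work hardest. First, the DSS part: the truncated data $\chi_R\theta_0$ is discontinuous at the origin (self-similar data is never continuous there unless constant), so it lies outside the classical well-posedness theory you invoke. Caffarelli--Vasseur gives \emph{existence} of a solution from $L^2\cap L^\infty$ data that regularizes for $t>0$, but no uniqueness statement in that class; Kiselev--Nazarov--Volberg and the nonlinear maximum principle are propagation-of-regularity results requiring data that is already regular. Without uniqueness of the approximations, neither the inheritance of $G$-symmetry by $\theta^{(R)}$ nor your key consistency relation $\theta^{(\lambda R)}(\lambda x,\lambda t)=\theta^{(R)}(x,t)$ is forced, and mollifying at a fixed scale to restore uniqueness destroys the $\lambda$-DSS structure. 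This is precisely why the paper does not solve~\eqref{eq:sqg} directly along the approximation: for the DSS case it introduces an approximate constitutive law $v=\vec{R}^\perp(\theta_{\de,\rho})$ with a \emph{scaling-covariant} mollification (at scale $t\delta$) and truncation, produces symmetric $\lambda$-DSS solutions of that approximate problem by the Schauder fixed point theorem (Proposition~\ref{prop.existence.SQG.local}), and only then passes $\delta\to0$, $\rho\to\infty$ using the compactness of the linear theory. For the non-DSS case the paper simply mollifies \emph{and} truncates to get $C^\infty_0$ data, where genuine uniqueness in $C([0,\infty);H^m)$ is available; your truncation-only variant needs that extra mollification.

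Second, the \emph{a priori} estimates in the theorem are asserted for \emph{any} bounded, smooth, symmetric solution whose data is attained only weakly-$\ast$, and your argument only produces estimates along your own approximations. The subtle point, which the paper flags explicitly, is that the $L^\infty$ norm of such a solution could in principle jump at $t=0$; the C\'ordoba--C\'ordoba inequality controls $t>0$ against later times but does not connect to $\theta_0$. The paper excludes the jump through the uniqueness (duality) argument for the linear drift-diffusion equation with $\BMO$ drift (Lemma~\ref{lem:Linftytheory}, relying on Maekawa--Miura for the adjoint problem), which identifies an arbitrary bounded smooth symmetric solution with the one constructed in the linear theory; the same lemma supplies the compactness and the weak-$\ast$ attainment of initial data (via bounds on $\p_t\theta$ in negative spaces) that you attribute to equicontinuity from~\eqref{eq:spacetimeregests}, which only holds on compact subsets of $\R^2\times\R_+$ and says nothing at $t=0$. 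Finally, for the smoothing estimates themselves, the correct tools are the De Giorgi-type $L^\infty\to C^{\alpha}$ estimate of Caffarelli--Vasseur followed by Silvestre's $C^{1,\alpha}$ estimate (with the paper's flow-line translation trick to use only the H\"older seminorm of $v$), not moduli-of-continuity/propagation arguments, since the data here is merely bounded. Your near/far symmetric decomposition of the drift is in the right spirit (it is close to the paper's $X_p$ bounds for $\vec{R}^\perp$ in Lemma~\ref{lem:riesz}), but it does not substitute for these smoothing inputs.
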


The maximum principle~\eqref{eq:maxprinciple} is not as obvious as expected, since \emph{a priori} the $L^\infty$ norm of a bounded, smooth solution could `jump' at the initial time (see Lemma~\ref{lem:Linftytheory}).

The specific form of the time regularity estimates in~\eqref{eq:spacetimeregests} is because, although $\vec{R}^\perp \theta$ is well defined, it is not controlled in $L^\infty$. We address this, and uniqueness, in Theorem~\ref{thm:moreregularsolutions}.


Let $\alpha \in (0,1)$. For each $f \in L^1_\loc(\R^2)$, we define the semi-norm
\begin{equation}
	\label{eq:ydotdef}
	\norm{f}_{\dot Y^\alpha} = \sup_{r > 0} r^\alpha [f]_{C^\alpha(\R^2 \setminus B_r)}.
\end{equation}
Let $\theta_0 \in L^\infty$ be symmetric, with $\vec{R} \theta_0 \in L^\infty$ (equivalent to $\vec{R}^\perp \theta_0 \in L^\infty$), $\nabla \theta_0 \in L^{2,\infty}$, and
\begin{equation}
	\label{eq:ybbdef}
	\norm{\theta_0}_{\mathbb{Y}^\alpha} = \norm{\theta_0}_{L^\infty}  + \norm{\theta_0}_{\dot Y^\alpha}  + \norm{\vec{R} \theta_0}_{L^\infty} + \norm{\nabla \theta_0}_{L^{2,\infty}} \leq M.
\end{equation}
This assumption is satisfied by $\lambda$-DSS $\theta_0$ locally Lipschitz in $\R^2 \setminus \{ 0 \}$.

\begin{theorem}[Regular solutions]
\label{thm:moreregularsolutions}
Let $\theta_0$ be as above and $\theta$ be as in Theorem~\ref{thm:boundedsolutions}.

(\emph{A priori} estimates)
We decompose $\theta$ as
\begin{equation}
	\theta = e^{-t\Lambda} \theta_0 + \psi.
\end{equation}
Then, for all $p \in (2,+\infty)$, we have
\begin{equation}
	 \sup_{t > 0}  t^{- \frac{2}{p}} \norm{\psi(\cdot,t)}_{L^p(\R^2)} \leq C(p,\alpha,M).
\end{equation}
Moreover, $\theta$ satisfies the spacetime regularity estimates
\begin{equation}
	\sup_{t > 0} t^{k+\ell} \norm{\p_t^k \nabla^\ell_x \theta(\cdot,t)}_{L^\infty(\R^2)} \leq C(k,\ell,\alpha,M)
\end{equation}
for all integers $k,\ell \geq 0$.

(Uniqueness) If $M \ll_\alpha 1$, then $\theta$ is unique in the class of bounded, smooth, symmetric solutions.
\end{theorem}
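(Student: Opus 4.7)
\emph{Setting up the equation for $\psi$.} The plan is to subtract the linear evolution: $\psi = \theta - e^{-t\Lambda}\theta_0$ satisfies $\psi|_{t=0}=0$ and
\begin{equation*}
\partial_t \psi + \Lambda \psi = -\vec{R}^\perp \theta \cdot \nabla \theta = -\div(\theta\, \vec{R}^\perp \theta),
\end{equation*}
where the second equality uses $\div\vec{R}^\perp\theta = 0$. The Duhamel formula then reads $\psi(t) = -\int_0^t e^{-(t-s)\Lambda}\div(\theta\,\vec{R}^\perp\theta)(s)\,ds$. The key auxiliary estimates I will aim for are the propagation of two \emph{critical} derivative norms:
\begin{equation*}
\sup_{t>0} \norm{\vec{R}^\perp \theta(\cdot,t)}_{L^\infty} \leq C(\alpha,M), \qquad \sup_{t>0} \norm{\nabla \theta(\cdot,t)}_{L^{2,\infty}} \leq C(\alpha,M).
\end{equation*}
Together with $\norm{\theta}_{L^\infty}\leq M$ from Theorem~\ref{thm:boundedsolutions}, these yield $\vec{R}^\perp\theta\cdot\nabla\theta \in L^\infty_t L^{2,\infty}_x$ with norm $\lesssim M^2$.

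\emph{Propagating the critical norms and bounding $\psi$.} For the linear part, $\vec{R}^\perp e^{-t\Lambda}\theta_0 = e^{-t\Lambda}\vec{R}^\perp\theta_0$ and $\nabla e^{-t\Lambda}\theta_0$ are bounded in $L^\infty$ and $L^{2,\infty}$ respectively by $\norm{\theta_0}_{\mathbb{Y}^\alpha}$ since the semigroup $e^{-t\Lambda}$ is a convolution with a nonnegative integrable kernel. For the correction $\psi$, I would first derive the $L^p$ bound and then use it to propagate the derivative norms via Duhamel. Applying the fractional-heat smoothing $\norm{e^{-t\Lambda}f}_{L^p}\lesssim t^{-2(1/2-1/p)}\norm{f}_{L^{2,\infty}}$ for $p\in (2,\infty)$ (which follows from the explicit Poisson-like kernel and interpolation) gives
\begin{equation*}
\norm{\psi(t)}_{L^p} \lesssim \int_0^t (t-s)^{-1+2/p}\,M^2 \, ds \lesssim M^2 t^{2/p}.
\end{equation*}
The bound on $\vec{R}^\perp\psi$ in $L^\infty$ and on $\nabla\psi$ in $L^{2,\infty}$ is closed by a continuity/bootstrap argument: applying $\vec{R}^\perp$ or $\nabla$ inside the Duhamel integral produces a second-order singular integral $\nabla\vec{R}^\perp e^{-(t-s)\Lambda}$ whose kernel decays integrably in $s$ against $L^{2,\infty}$ source, so the nonlinear correction is controlled by $CM^2$. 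Here it is essential that the $\dot Y^\alpha$ seminorm controls Hölder regularity of $\theta_0$ away from the origin, which lets us handle the singular integral at the origin via the self-similar ansatz. Once these critical norms are in hand, the equation becomes a linear fractional drift-diffusion with bounded drift $\vec{R}^\perp\theta$ and $L^{2,\infty}$ RHS $\div(\theta\vec{R}^\perp\theta)$; iterative Schauder/semigroup estimates respecting the SQG scaling yield the claimed $t^{k+\ell}$-weighted spacetime regularity on $\theta$.

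\emph{Uniqueness for small $M$.} Let $\theta_1, \theta_2$ be two bounded, smooth, symmetric solutions with the same data, and set $w = \theta_1 - \theta_2 = \psi_1 - \psi_2$, which belongs to $L^p$ on compact time intervals for every $p\in(2,\infty)$ by the a priori estimate and vanishes at $t=0$. Then
\begin{equation*}
\partial_t w + \Lambda w + \vec{R}^\perp \theta_1 \cdot \nabla w + \vec{R}^\perp w \cdot \nabla \theta_2 = 0.
\end{equation*}
I would test against $|w|^{p-2}w$ for $p$ slightly above $2$ and apply the Córdoba–Córdoba inequality to control the dissipation; the drift term vanishes by incompressibility, while the stretching term is bounded by $\norm{\vec{R}^\perp w}_{L^\infty}\norm{\nabla\theta_2}_{L^{2,\infty}}\norm{w}_{L^p}^{p-1}$, controlled by a factor of $M$ from the critical bounds on $\theta_2$. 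For $M\ll_\alpha 1$ this is absorbed (or handled by Gronwall after trading $\norm{\vec{R}^\perp w}_{L^\infty}$ for $\norm{w}_{L^p}$ via the structure of $w$), giving $w\equiv 0$.

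\emph{Main obstacle.} I expect the genuinely delicate step to be the propagation of $\norm{\vec{R}^\perp\theta}_{L^\infty}$, which is not directly preserved by the nonlinear flow (Riesz transforms fail to be bounded on $L^\infty$). Closing this bound requires using both the symmetry gauge to pin down the constant and the divergence structure of the nonlinearity, together with the $\dot Y^\alpha$ localization near the origin, to make the otherwise borderline singular integral estimates work.
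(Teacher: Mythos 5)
There is a genuine gap, and it sits exactly where you flag your ``main obstacle.'' Your entire argument is predicated on first propagating the critical norms $\sup_t\norm{\vec{R}^\perp\theta(\cdot,t)}_{L^\infty}+\sup_t\norm{\nabla\theta(\cdot,t)}_{L^{2,\infty}}\le C(\alpha,M)$ for \emph{arbitrary} $M$, and then feeding $\vec{R}^\perp\theta\cdot\nabla\theta\in L^\infty_t L^{2,\infty}_x$ into Duhamel to get the $L^p$ bound on $\psi$. But the mechanism you propose to close these critical bounds does not work: applying $\nabla$ or $\nabla\vec{R}^\perp$ to $e^{-(t-s)\Lambda}$ costs a factor $(t-s)^{-1}$ in operator norm on $L^{2,\infty}$ (the Riesz transform is order zero), so the Duhamel integral for $\nabla\psi$ or $\vec{R}^\perp\nabla\psi$ against an $L^\infty_tL^{2,\infty}_x$ source is \emph{not} integrable in $s$ near $s=t$ --- the kernel does not ``decay integrably'' as you assert. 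More fundamentally, the problem is critical: the bilinear term carries no smallness in time or in norm when $M$ is large, so there is no parameter with which to run a continuity/contraction argument; this is precisely the obstruction the paper identifies (``the potential $\nabla\theta^{\rm lin}$ \ldots is large in critical spaces''). The paper's route is logically the reverse of yours and supplies the missing mechanism in two ways: (i) it borrows the nonlinear smoothing of Theorem~\ref{thm:boundedsolutions} (De Giorgi/Caffarelli--Vasseur plus Silvestre) to get the subcritical-in-form bounds $t^{\ell}\norm{\nabla^\ell\theta}_{L^\infty}\les_M1$, and (ii) it performs a Calder{\'o}n-type splitting $\theta_0=\varphi\theta_0+(1-\varphi)\theta_0$ into an $L^p$ (supercritical) piece, treated by $L^p$ energy estimates, and a $C^\alpha$ (subcritical) piece whose linear evolution satisfies $\norm{\nabla\bar\theta}_{L^1_tL^\infty_x}\les_{\alpha,p}T^{\,2/p+\alpha-2\alpha/p}M$, which is small for short time and can be absorbed; the $L^\infty$ bound on $\vec{R}^\perp\theta$ is then deduced \emph{a posteriori} by interpolating $\vec{R}^\perp\psi\in L^\infty_tL^4_x$ (a consequence of the $L^p$ estimate) with $t\norm{\nabla\vec{R}^\perp\psi}_{L^\infty}\les_M1$ (via $\dot B^1_{\infty,1}$ and boundedness of Riesz transforms there). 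Without either ingredient (i) or (ii), your first step is unsupported, and everything downstream (the $t^{2/p}$ bound on $\psi$, the spacetime regularity, and the smallness constant in uniqueness) collapses.

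A secondary issue: in the uniqueness step you invoke $\norm{\vec{R}^\perp w}_{L^\infty}$ for the difference $w$, which is not controlled (Riesz transforms are unbounded on $L^\infty$, and $w$ has no symmetry-independent pointwise bound on its transform), and your H{\"o}lder pairing $L^\infty\cdot L^{2,\infty}\cdot L^{p/(p-1)}$ does not close for $p>2$. The paper avoids this by an $L^4$ energy estimate in the time-weighted norm $\norm{t^{-1/2}f}_{L^\infty_tL^4_x}$, estimating the stretching term by $\norm{\vec{R}^\perp f}_{L^4}\norm{f}_{L^4}^3\,\norm{\nabla\theta_2}_{L^\infty}$ and using $t\norm{\nabla\theta_2}_{L^\infty}\le C(M)\to0$ as $M\to0$; if you repair the a priori part, you should adopt that structure (or justify an $L^q$ bound on $\vec{R}^\perp w$) rather than an $L^\infty$ bound on the transformed difference.
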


Classically, self-similar solutions describe the long-time behavior of certain evolutionary PDEs with a scaling symmetry. Among the numerous examples are the heat equation (heat kernel), porous medium equation (Barenblatt solutions), viscous Burgers equation (diffusion waves), and~\eqref{eq:NS2d} (Lamb-Oseen vortices, see~\cite{GallayWayne}). Small self-similar solutions play a similar role in~\eqref{eq:sqg}.

To state Theorem~\ref{thm:condasstab}, we reintroduce $G$ into the notation (see Remark~\ref{rmk:bifrmk}).
Let $\theta_0^{\rm ss} \in L^\infty$ be $G$-symmetric and self-similar. Let $\theta^{\rm ss}$ be a bounded, smooth, $G$-symmetric, self-similar solution with initial data $\theta_0^{\rm ss}$. Let $\theta_0 \in L^\infty$ satisfying
\begin{equation}
	\label{eq:initialdataassumption}
	\norm{\theta_0 - \theta_0^{\rm ss}}_{L^\infty(\R^2 \setminus B_R)} \to 0 \text{ as } R \to +\infty.
\end{equation}
Let $\theta$ be a bounded, smooth, $G$-symmetric solution with initial data $\theta_0$.

\begin{theorem}[Conditional asymptotic stability]
	\label{thm:condasstab}
Suppose that $\theta^{\rm ss}$ is the unique solution in the class of bounded, smooth, $G$-symmetric solutions with initial data $\theta_0^{\rm ss}$. Then, for all $R > 0$, we have
\begin{equation}
	\label{eq:asymptoticstability}
	\norm{\theta(\cdot,t) - \theta^{\rm ss}(\cdot,t)}_{L^\infty_x(B(Rt))} \to 0 \text{ as } t \to +\infty.
\end{equation}
In particular, Theorem~\ref{thm:moreregularsolutions} guarantees asymptotic stability when $\norm{\theta_0^{\rm ss}}_{{\rm Lip}(S^1)} \ll 1$.
\end{theorem}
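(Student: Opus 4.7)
The strategy is to exploit the scale invariance of~\eqref{eq:sqg} to convert the long-time asymptotics into an initial-time compactness statement, and then invoke the weak-$\ast$ stability built into Theorem~\ref{thm:boundedsolutions} together with the uniqueness hypothesis. Concretely, I will rescale $\theta$ at ever larger scales and argue that the rescalings converge back to $\theta^{\rm ss}$ in $C^\infty_{\loc}(\R^2 \times \R_+)$.

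First, for each $\lambda > 0$ I set $\theta_\lambda(x,t) := \theta(\lambda x, \lambda t)$. Because~\eqref{eq:sqg} is invariant under~\eqref{eq:SQGscaling} and the $G$-action commutes with dilations, each $\theta_\lambda$ is again a bounded, smooth, $G$-symmetric solution with initial data $\theta_0^\lambda(x) := \theta_0(\lambda x)$ and the same $L^\infty$ bound as $\theta$. The central input is that $\theta_0^\lambda \wstar \theta_0^{\rm ss}$ in $L^\infty(\R^2)$ as $\lambda \to +\infty$: by the $0$-homogeneity of $\theta_0^{\rm ss}$,
\begin{equation*}
	\theta_0^\lambda(x) - \theta_0^{\rm ss}(x) = \theta_0(\lambda x) - \theta_0^{\rm ss}(\lambda x) \quad (x \neq 0),
\end{equation*}
which tends to zero pointwise on $\R^2 \setminus \{0\}$ by~\eqref{eq:initialdataassumption}; combined with the uniform $L^\infty$ bound, dominated convergence yields the claimed weak-$\ast$ convergence against any $L^1$ test function.

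Next I would apply the weak-$\ast$ stability portion of Theorem~\ref{thm:boundedsolutions}: along a subsequence $\lambda_k \to +\infty$, $\theta_{\lambda_k} \to \tilde \theta$ in $C^\infty_{\loc}(\R^2 \times \R_+)$ for some bounded, smooth, $G$-symmetric solution $\tilde \theta$ with initial data $\theta_0^{\rm ss}$. The hypothesized uniqueness of $\theta^{\rm ss}$ in this class forces $\tilde \theta = \theta^{\rm ss}$, and a standard subsequence-of-subsequence argument promotes this to full convergence $\theta_\lambda \to \theta^{\rm ss}$ in $C^\infty_{\loc}(\R^2 \times \R_+)$. Evaluating at $t = 1$ and using the self-similarity identity $\theta^{\rm ss}(\lambda x, \lambda) = \theta^{\rm ss}(x,1)$ then gives
\begin{equation*}
	\|\theta(\cdot,\lambda) - \theta^{\rm ss}(\cdot,\lambda)\|_{L^\infty(B(R\lambda))} = \sup_{|x| \leq R} |\theta_\lambda(x,1) - \theta^{\rm ss}(x,1)| \to 0,
\end{equation*}
which is~\eqref{eq:asymptoticstability} after renaming $\lambda = t$.

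The main obstacle is the weak-$\ast$ convergence of the rescaled initial data: Theorem~\ref{thm:boundedsolutions} requires convergence on all of $\R^2$, while~\eqref{eq:initialdataassumption} controls only the complement of large balls. The $0$-homogeneity of $\theta_0^{\rm ss}$ is what bridges this gap, as the rescaling pushes the region of poor control toward the origin, which is a null set and hence invisible to $L^1$ test functions. The uniqueness hypothesis plays an equally essential role: without it, one would only obtain subsequential convergence to \emph{some} self-similar solution with initial data $\theta_0^{\rm ss}$, consistent with the bifurcation scenario that motivates the paper.
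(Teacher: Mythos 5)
Your proof is correct and follows essentially the same route as the paper: rescale by $\theta(\lambda x,\lambda t)$, invoke the weak-$\ast$ stability from Theorem~\ref{thm:boundedsolutions}, identify the limit via the uniqueness hypothesis, and unfold the rescaling. The only addition is that you spell out, via $0$-homogeneity of $\theta_0^{\rm ss}$ and dominated convergence, why $\theta_0(\lambda\,\cdot) \wstar \theta_0^{\rm ss}$ — a step the paper states without justification — and you make the subsequence-of-subsequence upgrade explicit, where the paper just notes that the sequence $(t_k)$ was arbitrary; both are fine and fill in the same implicit details.
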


Clearly, uniqueness of $\theta^{\rm ss}$ within the self-similar class is necessary for its asymptotic stability to hold. The above notion of convergence is equivalent to $L^\infty_\loc(\R^2)$ convergence of the profile $\Theta$ in similarity variables: $\Theta(y,s) = \theta(x,t)$, where $s = \log t$, $y = x/t$. An analogous result holds in the discretely self-similar case.

\begin{proof}[Proof of Theorem~\ref{thm:condasstab}]
Let $(t_k)_{k \in \N} \subset \R_+$ with $1 \leq t_k \to +\infty$ as $k \to +\infty$. Consider the sequence $(\theta^{(k)})_{k \in \N}$ of rescaled solutions
\begin{equation}
	\label{eq:therescaling}
	\theta^{(k)}(x,t) = \theta(t_k x, t_k t)
\end{equation}
with initial data $\theta_0^{(k)}(x) = \theta_0(t_k x)$. Notice that $\theta_0^{(k)} \wstar \theta_0^{\rm ss}$ in $L^\infty(\R^2)$ as $k \to +\infty$. By the weak-$\ast$ stability  property in Theorem~\ref{thm:boundedsolutions}, there exists a subsequence (still indexed by $k$) and a bounded, smooth, $G$-symmetric solution $\bar{\theta}$ such that
\begin{equation}
	\theta^{(k)} \to \bar{\theta} \text{ in } C^\infty_\loc(\R^2 \times \R_+) \text{ as } k \to +\infty
\end{equation}
and $\bar{\theta}(\cdot,0) = \theta_0^{\rm ss}$. By the uniqueness assumption, we have $\bar{\theta} \equiv \theta^{\rm ss}$. Since $(t_k)_{k \in \N}$ was arbitrary, unfolding the rescaling~\eqref{eq:therescaling} gives the asymptotic stability~\eqref{eq:asymptoticstability}.
\end{proof}

We expect that the convergence in Theorem~\ref{thm:condasstab} may be refined when $\theta_0^{\rm ss}$ is small. For example, we expect that bounded, (initially) compactly supported, $G$-symmetric perturbations converge in $L^\infty$ to the unique small self-similar solution at the rate $O(t^{-2})$. The situation when $\theta_0^{\rm ss}$ is not small but its solution is unique may be more subtle. We leave this and other refinements for future work.

\begin{remark}
\label{rmk:bifrmk}
Crucially, the uniqueness condition in Theorem~\ref{thm:condasstab} is allowed to depend on the group $G$. Consider the following scenario, which is inspired by the work of Jia, {\v S}ver{\'a}k, and Guillod~\cite{jiasverakillposed,guillodsverak}. Let $\bar{G} \subset G$ be a proper subgroup. For example, $\bar{G}$ contains only rotations while $G$ also contains a reflection. Consider the $1$-parameter family of initial data $A \theta_0^{\rm ss}$, $A > 0$, which presumably `draws' a branch of self-similar solutions in phase space. The solutions are unique when $A \ll 1$. Plausibly, as $A$ approaches a threshold value $A = A_{\rm crit}$, one may find a bifurcation that breaks the reflection symmetry and, presumably, exchanges stability. In this case, (i) uniqueness would be lost within the $\bar{G}$-symmetric class but retained within the $G$-symmetric class, and (ii) asymptotic stability would continue to hold among $G$-symmetric solutions.
\end{remark}



Our last result concerns the global existence of symmetric solutions with non-decaying \textit{unbounded} initial data. Since we want our theory to include scaling-invariant solutions, we cannot work with the uniformly local space $L^p_\uloc$ common in the Navier-Stokes literature~\cite{lemarie2002}. Instead, we work with a weighted $L^p$-based space $X_p^{R_0}$, $R_0 > 0$, with norm
\begin{equation}
	\norm{f}_{X_p^{R_0}} :=\sup_{R\geq R_0} \left( \barint_{B_R} |f|^p \, dx \right)^{\frac{1}{p}},
\end{equation}
with the convention that $R_0 = 1$ when unspecified: $X_p = X_p^1$.
This norm is inhomogeneous but scales similarly to $L^\infty$. This space with $p=2$ also appears in the literature for the Navier-Stokes equation in  2D~\cite{basson} and, with a  different weight, in 3D~\cite{BK1,BKT1}; a related weighted $L^2$ space appears in~\cite{FDLR1}.
For~\ref{eq:sqg}, if $\theta_0 = f(x/|x|)$ is self-similar with unbounded $f : S^{n-1} \to \R$, then $\th_0\notin L^p_\uloc$ for any $p<\I$, whereas $X_p$ admits such functions by choosing $f \in L^p(S^{n-1})$. Hence, the $X_p$ framework is natural for \ref{eq:sqg}, and we develop our local theory in the following norms built off of $X_2$: For $R_0,T > 0$, we define
\begin{equation*}
	\norm{f}_{A_T^{R_0}} := \esssup_{t \in (0,T)} \norm{f(\cdot,t)}_{X_2^{R_0}},
\end{equation*}
and
\begin{equation*}
	\norm{f}_{E_T^{R_0}}^2 := \sup_{R \geq R_0} \frac{1}{R^n}  \int_0^T \int_{B_R} |\Lambda^{1/2} f|^2 \, dx \, dt
\end{equation*}
with the convention that $R_0 = 1$ when unspecified.


\begin{theorem}[Global existence for unbounded data]\label{thrm.existence}
Let $p  > 2$ and $\theta_0 \in X_p$ be symmetric.

(Existence) There exists a global symmetric distributional solution $\th$ to~\eqref{eq:sqg} with $\theta, |\theta|^{p/2} \in A_T \cap E_T$ for all $T > 0$, satisfying the local energy inequalities
\begin{equation}
	\p_t |\theta|^2 + 2 \theta \Lambda \theta + \div (v |\theta|^2) \leq 0
\end{equation}
and
\begin{equation}
	\p_t |\theta|^p + 2 |\theta|^{p/2} \Lambda (|\theta|^{p/2}) + \div (v |\theta|^p) \leq 0
\end{equation}
in the sense of distributions with non-negative test functions, and attaining its initial data in the sense
\begin{equation}
	\| \th (\cdot,t)-\th_0\|_{L^p(K)}\to 0 \text{ as } t \to 0^+
\end{equation}
for every compact set $K \subset \R^n$.

(\emph{A priori} estimates) There exists $T_*=T_*(p,\|\th_0\|_{X_p}) > 0$ such that, for all $T\geq T_*$, any solution $\theta$ as above also satisfies the local energy estimates
\begin{equation}
	\norm{\theta}_{A_{T}^{T/T_*}}^2 + 	\norm{\theta}_{E_{T}^{T/T_*}}^2 \leq 2  \|\th_0\|_{X_2}^2
\end{equation}
and
\begin{equation}
	\norm{|\theta|^{p/2}}_{A_{T}^{T/T_*}}^2 + 	\norm{|\theta|^{p/2}}_{E_{T}^{T/T_*}}^2 \leq 2 \|\th_0\|_{X_p}^p.
\end{equation}

(Self-similarity) The solution $\theta$ as guaranteed to exist above may be chosen to satisfy the following property: If $\lambda > 1$ and $\th_0$ is $\lambda$-DSS, then $\theta$ is $\lambda$-DSS.
\end{theorem}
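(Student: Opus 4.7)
The plan is to approximate $\theta_0 \in X_p$ by a sequence of bounded symmetric initial data, invoke Theorem~\ref{thm:boundedsolutions} to get bounded smooth symmetric solutions, establish uniform local energy bounds, and pass to the limit. For generic $\theta_0 \in X_p$, I would set $\theta_0^{(k)} = \chi_k \theta_0$ where $\chi_k$ is a radial cutoff equal to $1$ on $B_k$; this preserves $G$-symmetry, keeps the $X_p$ norm uniformly bounded, and gives $\theta_0^{(k)} \to \theta_0$ in $L^p_\loc$. In the $\lambda$-DSS case I would instead truncate to the annulus $\{\lambda^{-k} < |x| < \lambda^{k}\}$ by a radial $\lambda$-DSS cutoff, so that $\theta_0^{(k)}$ is simultaneously bounded and $\lambda$-DSS, and invoke the self-similarity clause of Theorem~\ref{thm:boundedsolutions} to take each $\theta^{(k)}$ to be $\lambda$-DSS.

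\textbf{Local energy estimates.} Since $\theta^{(k)}$ is smooth and bounded, the pointwise Córdoba--Córdoba inequality $p|\theta|^{p-2}\theta \Lambda \theta \geq 2|\theta|^{p/2}\Lambda(|\theta|^{p/2})$, applied after multiplying \eqref{eq:sqg} by $p|\theta|^{p-2}\theta$, yields the two claimed differential inequalities pointwise. Testing the $|\theta|^p$ inequality against a high power of a radial cutoff $\varphi_R \in C^\infty_c(B_{2R})$ equal to $1$ on $B_R$ with $|\nabla \varphi_R| \lesssim 1/R$ gives, schematically,
\begin{equation*}
\int |\theta(t)|^p \varphi_R^N \, dx + c \int_0^t \!\! \int |\Lambda^{1/2}(|\theta|^{p/2} \varphi_R^{N/2})|^2 \, dx \, ds \lesssim \int |\theta_0|^p \varphi_R^N \, dx + \frac{1}{R} \int_0^t \!\! \int |v| |\theta|^p \varphi_R^{N-1} \, dx \, ds + (\mathrm{comm.}),
\end{equation*}
where the commutator generated by localizing $\Lambda$ is absorbed into the dissipation using $|\nabla^\ell \varphi_R| \lesssim R^{-\ell}$. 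For the drift I would split $\theta = \theta_\near + \theta_\far$ with $\theta_\near = \theta \mathbf{1}_{B_{4R}}$. The $G$-symmetry fixes the gauge in $v = \vec{R}^\perp \theta$ via the mean-vanishing property \eqref{eq:quickrotationcomputation}, forcing $v_\far(0) = 0$; then the standard difference-kernel bound $|K(x-y) - K(-y)| \lesssim |x|/|y|^3$ with a dyadic sum in $|y|$ gives $\|v_\far\|_{L^\infty(B_{2R})} \lesssim \|\theta\|_{X_p}$, while the near part satisfies $\|v_\near\|_{L^p(B_{2R})} \lesssim R^{2/p} \|\theta\|_{X_p}$ by $L^p$-boundedness of the Riesz transform.

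\textbf{Bootstrap to close.} Inserting these two drift estimates, applying Hölder, averaging over $B_R$, and taking the supremum over $R \geq R_0$, the estimate reduces to a differential inequality of the form
\begin{equation*}
	Y_p(t) + D_p(t) \leq \|\theta_0\|_{X_p}^p + \frac{C}{R_0} \int_0^t Y_p(s)^{(p+1)/p} \, ds, \quad Y_p(t) := \sup_{R \geq R_0} \barint_{B_R} |\theta(t)|^p \, dx,
\end{equation*}
where $D_p(t)$ is the corresponding dissipation supremum. A continuity argument: if $Y_p \leq 2\|\theta_0\|_{X_p}^p$ on some subinterval, then the right-hand side is at most $\|\theta_0\|_{X_p}^p(1 + C T R_0^{-1} \|\theta_0\|_{X_p})$, and choosing $T_* = T_*(p, \|\theta_0\|_{X_p})$ so that $C T_* \|\theta_0\|_{X_p} \leq \tfrac{1}{2}$ (that is, imposing $R_0 \geq T/T_*$) closes the bootstrap. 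This yields the $A_T^{T/T_*}$ and $E_T^{T/T_*}$ bounds uniformly in $k$; the $p=2$ case is identical.

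\textbf{Passage to the limit and main obstacle.} Uniform control in $A_T^{R_0} \cap E_T^{R_0}$, combined with control of $\p_t \theta^{(k)}$ in $L^2_t H^{-s}_\loc$ extracted from \eqref{eq:sqg}, gives Aubin--Lions compactness in $L^p_\loc$; along a subsequence the limit $\theta$ is a $G$-symmetric distributional solution of \eqref{eq:sqg}, and both local energy inequalities and the $A, E$ bounds transfer by weak lower semicontinuity. Initial data attainment in $L^p_\loc(K)$ follows from the $|\theta|^p$ local energy inequality tested near $t=0$. The $\lambda$-DSS property passes to the limit because $C^\infty_\loc$-limits preserve DSS. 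The main technical obstacle is step two: justifying the commutator estimate in the localized dissipation and carrying out the near/far splitting of $v = \vec{R}^\perp \theta$. The near/far decomposition is only sensible because $G$-symmetry fixes the gauge ambiguity in $\vec{R}^\perp \theta$ singled out in the introduction; without it the drift integrand in the local energy identity would be defined only up to an uncontrolled additive vector field, and the estimate could not close.
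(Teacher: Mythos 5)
The overall skeleton — approximate by bounded symmetric data, invoke Theorem~\ref{thm:boundedsolutions}, prove uniform local energy estimates in $A_T \cap E_T$ for $\theta$ and $|\theta|^{p/2}$, and pass to the limit — matches the paper. But your approximation scheme has a genuine gap that undermines the whole argument.

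\textbf{The spatial cutoff does not produce bounded data.} You propose $\theta_0^{(k)} = \chi_k \theta_0$ with $\chi_k$ a radial spatial cutoff, and then want to ``invoke Theorem~\ref{thm:boundedsolutions} to get bounded smooth symmetric solutions.'' But Theorem~\ref{thm:boundedsolutions} requires $\theta_0 \in L^\infty(\R^2)$, and $X_p$ data need not be locally bounded: for example, self-similar $\theta_0(x) = f(x/|x|)$ with $f \in L^p(S^1) \setminus L^\infty(S^1)$ lies in $X_p$ but is unbounded on every annulus. Multiplying by a spatial cutoff leaves such data unbounded, so Theorem~\ref{thm:boundedsolutions} does not apply to $\theta_0^{(k)}$. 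The paper instead uses the \emph{vertical} (amplitude) truncation $\theta_0^{(m)} = m\,\sgn\theta_0$ where $|\theta_0| > m$ and $\theta_0^{(m)} = \theta_0$ otherwise. This gives $\theta_0^{(m)} \in L^\infty$, converges to $\theta_0$ in $L^p_\loc$, keeps $\norm{\theta_0^{(m)}}_{X_p}$ uniformly bounded, and — crucially — preserves both $G$-symmetry and $\lambda$-DSS, because $\sgn$ and $\min(|\cdot|,m)$ commute with the scaling $x \mapsto \lambda x$ and with the $O(1)$-action. Your DSS fix is also internally inconsistent: a function supported on the annulus $\{\lambda^{-k} < |x| < \lambda^k\}$ cannot be $\lambda$-DSS unless it vanishes, since $\lambda$-DSS invariance propagates the support to all of $\R^2 \setminus \{0\}$; and even ignoring the DSS issue, restricting $\theta_0$ to an annulus still does not make it bounded.

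Beyond that, your handling of the local energy estimates via a near/far splitting of $v$ and a continuity bootstrap is plausible in outline and is in the same spirit as the paper's, though the paper instead packages the drift control into the abstract $X_p$-boundedness of the (symmetry-fixed) Riesz transform (Lemma~\ref{lem:riesz}) and a clean commutator estimate (Lemma~\ref{lem:muhcommutatorest}), then closes via a Grönwall argument in Lemma~\ref{lem:aprioriestimates} and Corollary~\ref{lemma.bounds.nonlinear.Xp}. Also, ``the local energy inequalities transfer by weak lower semicontinuity'' hides real work: passing the term $\la \Lambda^{1/2}\theta, [\Lambda^{1/2},\phi]\theta\ra$ to the limit requires the careful four-term decomposition in the proof of Lemma~\ref{lem:stabilityL2loc}, and strong $L^p_\loc$ attainment of the data uses uniform convexity of $L^q$ as in Corollary~\ref{cor:lpstability}, not merely testing near $t=0$. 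These are fixable with the paper's lemmas, but the approximation error above is the step that would actually cause the proof to fail.
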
 

Compared to \cite{ConstantinCordobaWu,AbidiHmidi,Lazar1,Lazar2}, the above solutions do not have any decay properties, may have unbounded data, and may grow as $|x| \to +\infty$.

When $\th_0\in X_p\setminus L^\I$, it is plausible that the solutions we construct become smooth after the initial time, since \eqref{eq:sqg} has a smoothing effect from $L^2$ initial data~\cite{CV}. It is also plausible that the above solutions are unique under the small data assumptions of Theorem~\ref{thm:moreregularsolutions}. We leave these and other extensions for future work. 

\subsection{Comparison with existing literature}

\subsubsection*{SQG theory} The weak solution theory of~\eqref{eq:sqg} was initiated by Resnick in~\cite{Resnick} (weak solutions with $L^2$ initial data) and further developed by Marchand in~\cite{MarchandCMP,MarchandPhysicaD} ($\dot H^{-1/2}$ and $L^p$, $p \geq 4/3$, solutions). The same papers considered also the inviscid SQG. Local energy solutions with initial data in $L^2_\uloc(\R^2)$ were developed by Marchand in~\cite[Theorem 1.4]{MarchandCMP} under the background assumptions $p \in (2,+\infty)$ and $\theta_0 \in L^p(\R^2)$. This theory was further explored by Lazar~\cite{Lazar1,Lazar2}, who considered initial data in the space $\Lambda^s (H^s_\uloc) \cap L^\infty$ and showed global existence of weak solutions when $s \in [1/4,1]$ and local existence when $s \in (0,1/4)$. These assumptions are made in order to define the Riesz transforms, but they exclude non-trivial DSS solutions, even for bounded data.\footnote{If $w_0 \in H^s_\uloc$ and $\th_0=\La^s w_0$ is self similar, then $\lam^s w_0(x/\lam)=w_0(x)$. If $w_0|_{\mathbb S^1}\neq 0$ on  a subset of $\mathbb S^1$ having positive 1D Lebesgue measure, then $w_0\notin L^2_\uloc$. But, $ H^s_\uloc\subset L^2_\uloc$, meaning Lazar's space excludes non-trivial self-similar data.}

To our knowledge, self-similar solutions of~\eqref{eq:sqg} were only considered by Lemari{\'e}-Rieusset and Marchand in~\cite{PGLR-SQG}, who constructed small self-similar solutions for initial data satisfying $\theta_0, \vec{R} \theta_0 \in L^\infty$. This is less regularity than we require in Theorem~\ref{thm:moreregularsolutions} but does not admit large data.\footnote{We wish to clarify the assumptions in~\cite[Theorem 4]{PGLR-SQG}. In order to unequivocally define $\vec{R} \theta_0$, Marchand and Lemari{\'e}-Rieusset assume that $P_{\leq j} \theta_0 \to 0$ in the sense of distributions as $j \to -\infty$, where $P_{\leq j}$ is a Littlewood-Paley projector. This way, $\vec{R}$ can be safely defined on the frequency block $P_j \theta_0$, and the blocks are summed afterward. This assumption does not admit non-trivial self-similar data. However, Theorem~4 in~\cite{PGLR-SQG} is completely valid within the symmetric class without this assumption. Interestingly, in Example 3 of~\cite{PGLR-SQG}, the authors gave a symmetric example of self-similar $\theta_0$ satisfying $\vec{R} \theta_0 \in L^\infty$.} Notice that the strong solution theories of Abidi and Hmidi~\cite{AbidiHmidi} in $\dot B^{0}_{\infty,1}$  and Miura~\cite{miuracriticalsobolevlarge} in $H^1$ do not admit non-zero self-similar solutions. Together with~\cite{PGLR-SQG}, these are the most general `strong solution' theories for~\eqref{eq:sqg} known to the authors. The condition $\norm{\theta_0}_{L^\infty} \ll 1$ was exploited by Constantin, C{\'o}rdoba and Wu in~\cite{ConstantinCordobaWu} to propagate regularity for all time. As we mentioned earlier, there are many proofs of global regularity, which fall into two categories: \emph{propagation of regularity} and \emph{smoothing}. For our purposes, we require the proofs of smoothing~\cite{CV,MaekawaMiura-drift}, since our initial data is `merely bounded.'

\subsubsection*{Navier-Stokes theory} Since Jia and {\v S}ver{\'a}k~\cite{jiasverakselfsim} demonstrated the existence of large self-similar Navier-Stokes solutions, there have been an abundance of works in this direction, including~\cite{tsaidiscretely,korobkovtsai,bradshawtsaiII,bradshawtsairot,lemarie2016,wolfchael2loc,bradshawtsaibesov,globalweakbesov}. The original proof in~\cite{jiasverakselfsim} is based on so-called \emph{local smoothing} near the initial time, whereas later proofs were based on (i) decompositions of the solution into an `approximate solution' (typically $e^{t\Delta} u_0$) and a finite-energy `correction', or (ii) local energy solutions. Analogues of these methods appear in Theorems~\ref{thm:moreregularsolutions} and~\ref{thrm.existence}, respectively. Our approach also utilizes a mollification procedure that preserves the self-similar scaling, see~\cite{wolfchael2loc,FDLR1}. Local energy solutions were originally introduced by Lemari{\'e}-Rieusset~\cite{lemarie2002} in the Navier-Stokes theory with further developments in~\cite{KikuchiSeregin,MMPEnergy,KwonTsai} and others. The issue with the definition of Riesz transforms for the pressure $p = R_i R_j (u_i u_j)$ without decay also features in this theory but is perhaps less dangerous, since only $\nabla p$ enters into the PDE. 


Our interest in non-uniqueness is partly motivated by the question, ``Could a Navier-Stokes solution lose uniqueness at a hypothetical singularity?" One may consider forward self-similar solutions as continuations of hypothetical \emph{backward self-similar solutions}, which were introduced by Leray in~\cite{leray}, see p. 225. While Tsai~\cite{tsai} demonstrated that backward self-similar singularities satisfying certain general assumptions do not exist, it is entirely plausible that forward discretely self-similar solutions (for example, with rotational correction~\cite{bradshawtsairot}) may arise in this way. 


\subsubsection*{Miscellaneous} Convex integration has played an important role in establishing rigorous non-uniqueness of Navier-Stokes~\cite{BuckmasterVicolAnnals}, SQG~\cite{IsettVicol,BuckmasterShkollerVlad,chengkwon2020non,isett2020direct} and QG~\cite{novackquasigeostrophic} solutions. In~\cite{elgindijoungsymmetries,elgindi2019singular,elgindijeong} by Elgindi and Jeong, $m$-fold rotational symmetry plays a crucial role in their well-posedness theory for the Euler equations and inviscid SQG. Global asymptotic stability of large self-similar solutions was recently established by Beekie and the first author in~\cite{albrittonbeekie2020long} for the critical Burgers equation $\p_t u + u \p_x u + \Lambda u = 0$ and multidimensional scalar conservation laws with critical dissipation.

\subsection{Strategy of the proof}

Restricting one's attention to symmetric solutions defines $\vec{R}^\perp \theta$ unequivocally. Afterward, the strategy is as follows:

\subsubsection*{Bounded solutions} One wishes to leverage the conserved (or almost conserved) quantities of the equation. We focus on the critical $L^\infty$ norm, which is monotone due to the maximum principle. In this respect, our analysis is simpler than that for the Navier-Stokes equations, based on supercritical quantities. Next, one bootstraps from $L^\infty_x \to C^\alpha_x$ using the De Giorgi-type estimates of Caffarelli-Vasseur~\cite{CV} and, subsequently, from $C^\alpha_x \to C^{1,\alpha}_x$. These estimates are linear in nature.\footnote{There is a related but different $C^\alpha_x$ smoothing effect at work in~\cite{jiasverakselfsim}.} Less obvious is the control of the solution in time, since $\vec{R}^\perp \theta$ is only controlled in $\BMO$. As mentioned before, some care is required in the linear theory to ensure that every bounded smooth solution satisfies the maximum principle, that is, the $L^\infty$ norm may not `jump' at the initial time. This is developed in Lemma~\ref{lem:Linftytheory}. Once the \emph{a priori} estimates are in place, the self-similar solutions are constructed by the Leray-Schauder fixed point theorem at the level of an approximate equation with small parameter $\delta \to 0^+$. 

\subsubsection*{Regular solutions} Often, it is more convenient to work with energy-type quantities. For this, one wishes to introduce some decay into the problem. Let $\theta^{\rm lin} = e^{-t\Lambda} \theta_0$. We perturb off of the solution of the half-heat equation: $\theta = \theta^{\rm lin} + \psi$. The equation satisfied by the remainder $\psi$ is
\begin{equation}
	\label{eq:psiequation}
\begin{aligned}
	\p_t \psi + \Lambda \psi + \vec{R}^\perp \theta \cdot \nabla \psi + \vec{R}^\perp \psi \cdot \nabla \theta^\lin &= - \vec{R}^\perp \theta^\lin \cdot \nabla \theta^\lin
	\end{aligned}
\end{equation}
with $\psi(\cdot,0) = 0$. Under our assumptions, the (critical) forcing term on the RHS belongs to $L^\infty_{t,\loc} L^p_x$ (with suitable time weights) for all $p >2 $. Hence, we expect that the solution $\theta$ also belongs to $L^\infty_{t,\loc} L^p_x$ (with suitable time weights). The main difficulty here is that the potential $\nabla \theta^\lin$ in the term $\vec{R}^\perp \psi \cdot \nabla \theta^\lin$, which we would like to regard as `lower order', is actually \emph{large in critical spaces}.\footnote{As $\theta^\lin$ is self-similar, it only belongs to `singularly critical' or `ultracritical' spaces, which do not have a hidden smallness. For example, in this Navier-Stokes theory, consider the difference between $L^5_{t,x}$ and $L^{5,\infty}_t L^5_x$.} This difficulty makes the $L^p$ energy estimates unclear at the level of $\psi$. There is a known technique in the Navier-Stokes literature for circumventing this difficulty: the Calder{\'o}n-type splitting~\cite{calderon90}. We split the critical initial data $\theta_0 = \theta^{\rm sub}_0 + \theta^{\rm sup}_0$ into subcritical (H{\"o}lder continuous) and supercritical parts. Subsequently, one splits the solution as $\theta = e^{-t \Lambda} \theta^{\rm sub}_0 + \theta^{\rm sup}$. The coefficients in the equation for $\theta^{\rm sup}$ are subcritical, and it is possible to close an $L^p$ energy estimate. With such estimates in hand, one proves weak-strong uniqueness for small data in nearly the same way as in the Navier-Stokes theory. 

\subsubsection*{Unbounded solutions} While self-similar solutions have infinite energy, it is possible to control them via localized energy estimates in the function space $X_p$ mentioned above. 
In the estimates, there are boundary terms (from both the dissipation and drift) due to the flux of energy between annuli, but this is not a problem, since one controls all annuli at once. To control $\vec{R}^\perp \theta$, we use the simple observation that, in the presence of symmetries, $\vec{R}^\perp \: X_p \to X_p$ for all $p \in (1,+\infty)$. The restriction $p>2$ in Theorem~\ref{thrm.existence} comes from the term
\begin{equation}
	\iint |\theta|^2 v \cdot \nabla \varphi \, dx \, dt
\end{equation}
in the $L^2$ energy estimates.
Since $\dot H^{1/2} \subset L^4$, one can afford to (locally) place $v \in L^\infty_t L^p_x$, $p > 2$, and still `absorb' the bulk of $|\theta|^2$ into the LHS of the energy estimate. The same considerations give $L^{5/2}_{t,x}$ as the borderline quantity in the $\varepsilon$-regularity theory for~\eqref{eq:NS}~\cite{GustafsonKangTsai,wang2019regularity}.


\subsection{Self-similar Navier-Stokes solutions in two dimensions}
Consider the stream function
\begin{equation}
	\label{eq:initialstreamfunctions}
	\tilde{\psi}_0 = \psi_0 + \frac{\alpha}{2\pi} \log |x|,
\end{equation}
where $\psi_0$ is $0$-homogeneous and $\alpha \in \R$. Does there exist a self-similar solution of~\eqref{eq:NS2d} with initial vorticity $\omega_0 = \Delta \tilde{\psi}_0$? When $\psi_0 = 0$, the corresponding self-similar solutions are the \emph{Lamb-Oseen vortices}, whose initial data are Dirac masses. The Lamb-Oseen vortices were shown by Gallay and Wayne~\cite{GallayWayne} to be the unique solutions, within a natural class, to~\eqref{eq:NS2d} with Dirac mass initial data and to describe the long-time behavior of~\eqref{eq:NS2d}. Non-uniqueness of solutions to~\eqref{eq:NS2d} is anticipated for general initial stream functions of the form~\eqref{eq:initialstreamfunctions}, yet the proof of existence is unknown even when $\alpha = 0$.

One difficulty is that there is no quantity controlled `for free' from which to begin the analysis. At the level of the velocity field, the typical quantity is an $L^2$-based norm, such as $L^2_\uloc$, whereas non-trivial $-1$-homogeneous initial data $u_0$ does not belong to $L^2_\loc$ in dimension two. At the level of the vorticity, the $L^1$, $L^\infty$, and Lorentz space $L^{p,q}$ norms, $p \in (1,+\infty)$, $q \in [1,+\infty]$, would be controlled with estimates independent of the drift, except that our desired $\omega$ `only' belongs to $L^{1,\infty}$, $\nabla^\perp L^{2,\infty}(\R^2;\R^2)$, etc. That is, the obviously controlled quantities are either critical and `just missing' the initial data or subcritical, whereas known methods require control on supercritical or good critical quantities. We intend to address these issues in future work.

The equations~\eqref{eq:NS2d} and~\eqref{eq:sqg} belong to the family of \emph{generalized dissipative SQG equations}:
\begin{equation}
	\label{eq:msqg}
	\tag{gSQG}
	\p_t \omega + (\nabla^\perp \psi) \cdot \nabla \omega + \Lambda^\zeta \omega = 0, \quad \Lambda^s \psi = \omega,
\end{equation}
where $s, \zeta \in (0,2]$. Choosing $(s,\zeta) = (2,2)$ and switching $\omega \to - \omega$ recovers~\eqref{eq:NS2d}, while $(s,\zeta) = (1,1)$ recovers~\eqref{eq:sqg}. These equations were considered in~\cite{Resnick,ConstantinCordobaGancedoWu}. The Hamiltonian $H = \int \psi \omega \, dx = 0$, which corresponds to the kinetic energy in~\eqref{eq:NS2d} and the $\dot H^{-1/2}$ norm in~\eqref{eq:sqg}, is conserved for the inviscid dynamics. It would be interesting to know whether one can extend the present work by exploiting this quantity. We expect that the methods in the present work can be extended to~\eqref{eq:msqg} for some range of exponents. Furthermore, it may be interesting to understand whether the viscous point vortex solutions to~\eqref{eq:msqg}, analogous to the Lamb-Oseen vortices in~\eqref{eq:NS2d}, are unique. 

\subsection{On numerics}

It is reasonable to expect that one may detect symmetry-breaking numerically as in~\cite{guillodsverak}, which is based on numerical continuation for the steady-state problem~\eqref{eq:NSinselfsim} in similarity variables. In our setting, the numerics may be complicated by (i) non-locality appearing in the non-linearity and in the diffusion, and (ii) the boundary condition $\theta_0^{\rm ss}(x)$ as $|x| \to +\infty$ rather than decay to zero. It may also be possible to detect non-uniqueness by solving the time-dependent problem in similarity variables $y=x/t$, $s = \log t$, since a unique self-similar solution is stable (Theorem~\ref{thm:condasstab}) and presumably exchanges stability in the symmetry breaking. In this vein, we remark that algebraic decay in physical variables becomes exponential decay in similarity variables.


\section{Linear theory}

For notational convenience, we allow all constants to depend implicitly on the dimension $n \geq 2$.

\subsection{Non-local operators}

Let $\vec{R} = \vec{\nabla} (-\Delta)^{-1/2}$ be the vector of Riesz transforms. These operators are well defined as Fourier multipliers mapping $L^2 \to L^2$ and, by the classical Calder{\'o}n-Zygmund theory, may be extended as operators $L^p \to L^p$ ($1 < p < +\infty$) and $L^1 \to L^{1,\infty}$. In our setting, we require the following extension $L^\infty \to \BMO$ (see Stein \cite{BigStein}, p. 155--157; Duoandikoetxea~\cite{Duoand}, p. 118--119) that defines the Riesz transforms \emph{up to constants}:
\begin{equation}
	\label{eq:Riesztransformdef}
	\vec{R}f (x) = [\vec{R} (\mathbf{1}_{B_1} f)](x) + \pv \int_{\R^n} [K(x,y) - K(0,y)] (\mathbf{1}_{\R^n \setminus B_1} f)(y) \,dy,
\end{equation}
where
\begin{equation}
	K(y)=c_n \frac{y}{|y|^{n+1}}
\end{equation}
is the associated kernel. Definition~\eqref{eq:Riesztransformdef} is valid pointwise a.e. for any function $f \in L^1_\loc(\R^n)$ whose tail converges absolutely against $1/\la y \ra^{n+1}$. In particular, $\vec{R}$ is valid on the spaces $X_p$, $p \geq 1$, which we define below:


We use the weighted spaces $X_p^{R_0}$ and $X_{p,\osc}^{R_0}$ defined by the norms
\begin{equation}
	\norm{f}_{X_p^{R_0}} :=\sup_{R\geq R_0} \left( \barint_{B_R} |f|^p \, dx \right)^{\frac{1}{p}},
\end{equation}
and 
\begin{equation}
	\norm{f}_{X_{p,\osc}^{R_0}} :=\sup_{R\geq R_0} \left( \barint_{B_R}\left\lvert f(x)-\barint_{B_R} f(y) \, dy\right\rvert^p \, dx \right)^{\frac{1}{p}},
\end{equation}
where $R_0>0$ and $1\leq p<\I$. If $R_0=1$ we write $X_p =X_p^{R_0}$.
We set $X_\I=L^\I$ for notational convenience.
We have the trivial inclusions $X_{p_1} \into X_{p_2}$, $p_1 \geq p_2$, $L^\infty(\R^n) \into X_p$, and
$\BMO(\R^n) \into {X_{p,\osc}}$.

Let $f \in X_1$. For $j \in \N$, we define $P_j f = \mathbf{1}_{B(2^j)} f - \mathbf{1}_{B(2^{j-1})} f$ and $P_{\leq j} f = \mathbf{1}_{B(2^j)} f$. Then
\begin{equation}
	f = P_{\leq 1} f + \sum_{j=2}^\infty P_j f \text{ in } L^1_\loc(\R^n),
\end{equation}
and\footnote{This is a routine computation in the theory of Besov spaces of negative order. The non-obvious direction is
\begin{equation}
	\norm{P_j f}_{L^p(\R^n)} \leq \sum_{1 \leq k \leq j} 2^{nk/p} 2^{-nk/p} \norm{P_k f}_{L^p(\R^n)} \les_p  \sum_{k \leq j} 2^{nk/p} \times \sup_{\ell \geq 1} 2^{n\ell/p} \norm{P_{\ell} f}_{L^p(\R^n)} \les_p 2^{nj/p} \norm{f}_{X_p}. 
\end{equation}
The same computation underlies the heat characterization of negative order Besov spaces.
}
\begin{equation}
	\norm{f}_{X_p} \sim_p \norm{P_{\leq 1} f}_{L^p(\R^n)} + \sup_{j \geq 2} \, 2^{-nj/p} \norm{P_j f}_{L^p(\R^n)}.
\end{equation}

\begin{lemma}[Riesz transforms on $X_p$]
\label{lem:riesz}
For all $p \in (1,+\infty)$, we have $\vec{R} \: X_p \to X_{p,\osc}$ with
\begin{equation}
	\label{eq:xposcbound}
	\| \vec R f \|_{X_{p,\osc}} \lesssim_p \|f\|_{X_p}.
\end{equation}
Assume further that $n=2$ and $f$ is symmetric. Then $\vec{R} f \in X_p$ (defined to be the unique symmetric representative) and $\norm{\vec{R}f}_{X_p} = \norm{\vec{R}f}_{X_{p,\osc}}$. In particular,
\begin{equation}
	\label{eq:xposcbound2}
	\| \vec R f \|_{X_{p}} \lesssim_p \|f\|_{X_p}.
\end{equation}
\end{lemma}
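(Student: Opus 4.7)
The plan is to establish the oscillation bound~\eqref{eq:xposcbound} by a standard near/far decomposition around each ball $B_R$, and then to upgrade to the norm bound~\eqref{eq:xposcbound2} in the symmetric setting by showing that $G$-symmetry forces the mean of $\vec Rf$ on every $B_R$ to vanish.

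For~\eqref{eq:xposcbound}, fix $R\geq 1$ and split $f=f_{\near}+f_{\far}$ with $f_{\near}=\mathbf{1}_{B_{3R}}f$. The Calder\'on--Zygmund $L^p\to L^p$ bound gives $\|\vec Rf_{\near}\|_{L^p(\R^n)}\lesssim_p\|f\|_{L^p(B_{3R})}\lesssim R^{n/p}\|f\|_{X_p}$, hence $(\barint_{B_R}|\vec Rf_{\near}|^p\,dx)^{1/p}\lesssim\|f\|_{X_p}$. For the far part, the cancellation $K(x-y)-K(-y)$ built into~\eqref{eq:Riesztransformdef}, combined with the mean-value theorem, yields $|K(x-y)-K(-y)|\lesssim R/|y|^{n+1}$ whenever $x\in B_R$ and $|y|\geq 3R$. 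Decomposing $\{|y|>3R\}$ into dyadic annuli $A_j=B_{3\cdot 2^jR}\setminus B_{3\cdot 2^{j-1}R}$, H\"older's inequality together with the $X_p$ estimate $\|f\|_{L^p(B_{3\cdot 2^jR})}\lesssim (2^jR)^{n/p}\|f\|_{X_p}$ bounds the contribution of $A_j$ by $2^{-j}\|f\|_{X_p}$; summing a geometric series shows $\vec Rf_{\far}$ is bounded in $L^\infty(B_R)$, up to an additive constant, by $\|f\|_{X_p}$. Combining the two pieces and recalling that the $X_{p,\osc}$ seminorm is invariant under addition of constants yields~\eqref{eq:xposcbound}.

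For~\eqref{eq:xposcbound2}, the key observation is that the representative of $\vec Rf$ defined by~\eqref{eq:Riesztransformdef} is $G$-equivariant, in the sense that $\vec R(g\cdot f)=g\cdot\vec Rf$ for every $g\in G$. This is a direct change-of-variables computation, using the $G$-invariance of the unit ball $B_1$ and the rotational equivariance $K(gz)=gK(z)$ of the Riesz kernel. Repeating verbatim the calculation~\eqref{eq:quickrotationcomputation}---with $\vec R$ in place of $\vec R^\perp$ and $\barint_{B_R}$ in place of $\int_{B_N}$---shows that $\barint_{B_R}\vec Rf\,dx$ is a $G$-fixed vector in $\R^2$. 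Since $G$ was chosen so as to stabilize no nonzero vector, this mean must vanish for every $R\geq 1$, whence $\|\vec Rf\|_{X_p}=\|\vec Rf\|_{X_{p,\osc}}$, and~\eqref{eq:xposcbound2} follows from~\eqref{eq:xposcbound}. The only technical point requiring care is the verification of $G$-equivariance for the particular representative~\eqref{eq:Riesztransformdef}; once this is in hand, the lemma reduces to the rotation identity already used in the Introduction.
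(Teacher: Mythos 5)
Your proposal is correct and takes essentially the same route as the paper: the near part via Calder\'on--Zygmund $L^p$ boundedness and the far part via the cancellation $K(x,y)-K(0,y)$ built into \eqref{eq:Riesztransformdef} summed over dyadic annuli, with constants harmless in the oscillation seminorm (the paper merely organizes this as a global dyadic-shell decomposition $P_jf$ estimated at dyadic radii rather than a near/far split at scale $3R$). The symmetric upgrade is exactly the paper's argument: equivariance of the representative \eqref{eq:Riesztransformdef} together with the fixed-vector computation \eqref{eq:quickrotationcomputation} forces $\barint_{B_R}\vec{R}f\,dx=0$ for every $R$, so \eqref{eq:xposcbound2} follows from \eqref{eq:xposcbound}.
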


The symmetry ensures that $\int_{B_R} \vec{R} f \, dx = 0$ for all $R > 0$, as justified in~\eqref{eq:quickrotationcomputation}, so the estimate~\eqref{eq:xposcbound2} for symmetric $f$ follows immediately from~\eqref{eq:xposcbound} and the definitions of $X_p$ and $X_{p,\osc}$.

\begin{proof}
Let $f \in X_p$. We use the modified Riesz transform definition in~\eqref{eq:Riesztransformdef}. Our goal is to estimate $2^{-j_0n/p} \norm{P_{\leq j_0} \vec{R} f}_{L^p(\R^n)}$ uniformly in $j_0 \in \N$. Rewriting the singular integral as a sum over dyadic shells gives
\begin{equation}
	\label{eq:rieszequality}
	\vec{R} f = \vec{R} P_{\leq {j_0+1}} f + \sum_{j={j_0+2}}^{+\infty} \vec{R} P_{j} f \text{ in } L^1_\loc(\R^n).
\end{equation}
We apply $P_{\leq j_0}$ to~\eqref{eq:rieszequality} and estimate each term. First,
\begin{equation}
	\label{eq:first}
	\norm{P_{\leq j_0} \vec{R} P_{\leq j_0+1} f}_{L^p(\R^n)} \les_p \norm{P_{\leq j_0+1} f}_{L^p(\R^n)} \les_p 2^{nj_0/p} \norm{f}_{X_p}.
\end{equation}
Second, when $j \geq j_0+2$, we have
\begin{equation}
	\label{eq:linftybdonp}
	|(P_{\leq j_0} \vec{R} P_{j} f)(x)| \les 2^{j_0-j} 2^{-nj} \norm{P_{j} f}_{L^1(\R^n)} \overset{\eqref{eq:lebesgueintfact}}{\les} \underbrace{2^{j_0-j} 2^{-nj} 2^{nj(1-1/p)}}_{= 2^{j_0-j} 2^{-nj/p}} \norm{P_{j} f}_{L^p(\R^n)},
\end{equation}
since, when $|x| \leq 2^{j_0}$ and $|y| \geq 2^{j-1}$, we have $|x-y| \ges 2^j$, and the kernel $\tilde{K}(x,y) = K(x,y) \mathbf{1}_{B_1} + [K(x,y) - K(0,y)] \mathbf{1}_{\R^n \setminus B_1}$ of the modified Riesz transform $\vec{R}$ in~\eqref{eq:Riesztransformdef} satisfies $|\tilde{K}(x,y)| \les |x| |x-y|^{-n} |y|^{-1}$ when $y \geq 2$.\footnote{Recall that
\begin{equation}
	\label{eq:lebesgueintfact}
	\norm{g}_{L^\ell(\Omega)} \leq |\Omega|^{1/\ell-1/s} \norm{g}_{L^s(\Omega)}
\end{equation}
when $1 \leq \ell \leq s \leq +\infty$, $\Omega \subset \R^n$ is measurable with $|\Omega| < +\infty$, and $g \in L^1(\Omega)$.}
In particular, due to $\supp P_{\leq j_0} f \subset B(2^{j_0})$ and~\eqref{eq:lebesgueintfact}, we have
\begin{equation}
	\label{eq:second}
\begin{aligned}
	\norm{P_{\leq j_0} \vec{R} P_j f}_{L^p(\R^n)} \les 2^{nj_0/p} 2^{j_0-j} 2^{-nj/p} \norm{P_{j} f}_{L^p(\R^n)} \les 2^{nj_0/p} 2^{j_0-j} \norm{f}_{X_p}.
	\end{aligned}
\end{equation}
Finally, combining~\eqref{eq:rieszequality},~\eqref{eq:first}, and~\eqref{eq:second}, we have
\begin{equation}
	2^{-nj_0/p} \norm{P_{\leq j_0} \vec{R} f}_{L^p(\R^n)} \les_p \left( 1 + \sum_{j=j_0+2}^{+\infty} 2^{j_0-j} \right) \norm{f}_{X_p} \les_p \norm{f}_{X_p}.
\end{equation}
\end{proof}

\begin{remark}[Riesz transforms and convergence]
\label{rmk:riesztransformsandconvergence}
We require the following adaptation of Lemma~\ref{lem:riesz}. Let $T>0$.
Assume that $f^{(k)} \in L^\infty_t (X_p)_x(\R^n \times (0,T))$ is symmetric for all $k\in \N$ , $f^{(k)} \wstar f$ in $L^\infty_t (X_p)_x(\R^n \times (0,T))$, and $f^{(k)} \to f$ in $L^2_\loc(\R^n \times [0,T])$  as $k \to +\infty$. Pass to a subsequence (still labeled by $k$) satisfying $\vec{R}^\perp f^{(k)} \wstar g$ in $L^\infty_t (X_p)_x(\R^n \times (0,T))$ as $k \to +\infty$. Then $g = \vec{R}^\perp f$.
\end{remark}

\begin{remark}[On weak fractional derivatives]
	 When $\varphi \in L^\infty \cap C^2_\loc$, the fractional Laplacian $\Lambda^{s} \varphi = (-\Delta)^{\frac{s}{2}} \varphi$, where $s \in (0,2)$, is defined by
	\begin{equation}
		\Lambda^s \varphi(x) = c_{s,n} \pv \int_{\R^n} \frac{\varphi(x)-\varphi(y)}{|x-y|^{n+s}} \, dy.
	\end{equation}
	The above exponent is dimensionally correct. While $\Lambda^s \theta$ is not well behaved at the level of tempered distributions, we may provide a weak definition when $\theta$ belongs to the restricted class $\theta \in X_1$. Then $\Lambda^s \theta$ is a tempered distribution defined by duality:
	\begin{equation}
		\la \Lambda^s \theta, \varphi \ra = \int_{\R^n} \theta(x) \cdot \Lambda^s \varphi(x) \, dx,
	\end{equation}
	where $\varphi$ belongs to the Schwartz class. The above integral makes sense because
	\begin{equation}
	|\Lambda^s \varphi(x)| \les_{\varphi} 1/\la x \ra^{n+s},
	\end{equation}
	and
	\begin{equation}
	\int_{\R^n} |\theta(x)| \la x \ra^{-(n+s)} \, dx \les \sum_{k \geq 0} 2^{-k(n+s)} \int_{B_{2^{k+1} \setminus 2^k}} |\theta(x)| \, dx \les \sum_{k \geq 0} 2^{-ks} \norm{\theta}_{X_1} < +\infty.
	\end{equation}
	For time-dependent functions on $\R^n \times (0,T)$, we have an analogous definition when
	\begin{equation}
	\sup_{R \geq 1} \int_0^T \barint_{B_R} |\theta(x)| \, dx < +\infty.
	\end{equation}
\end{remark}

\subsection{Non-local drift-diffusion equation}
\label{sec:linftytheory}

In this section, we sketch a solution theory for the linear PDE
\begin{equation}
	\label{eq:basiclinearpde}
	\p_t \theta + v \cdot \nabla \theta + \Lambda \theta = 0,
\end{equation}
where $\div v = 0$. We are particularly interested in bounded solutions, whose theory is summarized in Lemma~\ref{lem:Linftytheory}. For unbounded solutions, we develop local energy estimates in Section~\ref{sec:localenergy}.

To begin, we sketch the $L^2$ theory, where existence, uniqueness, and the H{\"o}lder smoothing estimates of Caffarelli and Vasseur~\cite{CV} are readily justified. Afterward, we pass to the $L^\infty$ setting.

Let $Q_T = \R^n \times (0,T)$.

\begin{lemma}[$L^2$ theory]
	\label{lem:l2theory}
Let $v \in L^\infty_t C^{1/2}_x(Q_1)$ and $\div v = 0$. Let $\theta_0 \in L^1 \cap L^\infty(\R^n)$. Then there exists a unique solution $\theta \in L^\infty_t L^2_x \cap L^2_t H^{1/2}_x(Q_1)$ to~\eqref{eq:basiclinearpde} satisfying $\theta(\cdot,t) \to \theta_0$ in $L^2(\R^n)$ as $t \to 0^+$. Additionally, $\theta$ belongs to $C([0,1];L^2(\R^n))$, and for all $p \in [1,+\infty]$,
\begin{equation}
	\norm{\theta}_{L^\infty_t L^p_x(Q_T)} \leq \norm{\theta_0}_{L^p(\R^n)}.
\end{equation} 
\end{lemma}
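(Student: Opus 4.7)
The plan is to construct $\theta$ via a viscous regularization, derive uniform $L^p$ bounds through the C\'ordoba--C\'ordoba pointwise inequality, pass to the limit by Aubin--Lions, and close uniqueness and time-continuity by a standard energy argument. Concretely, I would first solve the parabolic approximation
\[
\p_t \theta_\epsilon + v \cdot \nabla \theta_\epsilon + \Lambda \theta_\epsilon - \epsilon \Delta \theta_\epsilon = 0, \quad \theta_\epsilon(\cdot,0) = \theta_0,
\]
which, under $v \in L^\infty_t C^{1/2}_x$, $\div v = 0$, and $\theta_0 \in L^1 \cap L^\infty$, admits a classical decaying solution $\theta_\epsilon$ by standard uniformly parabolic theory. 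Testing against $\theta_\epsilon$ and using $\div v = 0$ to kill the drift yields
\[
\frac{1}{2}\norm{\theta_\epsilon(\cdot,t)}_{L^2}^2 + \int_0^t \norm{\Lambda^{1/2}\theta_\epsilon}_{L^2}^2 \, ds + \epsilon \int_0^t \norm{\nabla \theta_\epsilon}_{L^2}^2 \, ds = \frac{1}{2}\norm{\theta_0}_{L^2}^2,
\]
giving uniform control in $L^\infty_t L^2_x \cap L^2_t H^{1/2}_x$.

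For the $L^p$ bounds with $p \in [2,+\infty)$, multiply by $|\theta_\epsilon|^{p-2}\theta_\epsilon$, use $\div v = 0$, and invoke the C\'ordoba--C\'ordoba inequality $|\theta|^{p-2}\theta \cdot \Lambda \theta \geq \frac{1}{p}\Lambda(|\theta|^p)$ to conclude that the fractional dissipation integrates non-negatively; this yields $\norm{\theta_\epsilon(\cdot,t)}_{L^p} \leq \norm{\theta_0}_{L^p}$. The endpoint $p = +\infty$ then follows by sending $p \to +\infty$ (using that $\theta_\epsilon(\cdot,t) \in L^1 \cap L^\infty$ uniformly), and $p = 1$ from the same argument applied to a convex smoothing such as $\sqrt{\theta^2+\eta^2}-\eta$, letting $\eta \to 0$ by monotone convergence. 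To pass to the limit $\epsilon \to 0^+$, the equation gives $\p_t \theta_\epsilon$ bounded in $L^2_t H^{-1}_x$, so Aubin--Lions furnishes a subsequence converging to $\theta$ strongly in $L^2_\loc(Q_1)$ and weakly-$\ast$ in the natural spaces; the limit solves the equation distributionally, and the $L^p$-bounds persist by lower semicontinuity.

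For uniqueness, let $\delta = \theta_1 - \theta_2$. Since $v \in L^\infty_t C^{1/2}_x$ and $\delta \in L^2_t H^{1/2}_x$, the product rule in Sobolev spaces yields $v\delta \in L^2_t H^{1/2}_x$; combined with $\div v = 0$, this gives $v \cdot \nabla \delta = \div(v\delta) \in L^2_t H^{-1/2}_x$, and together with $\Lambda \delta \in L^2_t H^{-1/2}_x$ we obtain $\p_t \delta \in L^2_t H^{-1/2}_x$. The Lions--Magenes lemma places $\delta \in C([0,1]; L^2)$ and legitimizes testing against $\delta$ itself; with $\div v = 0$,
\[
\frac{1}{2}\frac{d}{dt}\norm{\delta}_{L^2}^2 + \norm{\Lambda^{1/2}\delta}_{L^2}^2 = 0,
\]
so $\delta \equiv 0$. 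Applying the same reasoning directly to $\theta$ yields $\theta \in C([0,1]; L^2(\R^n))$, hence attainment of the initial data in $L^2$. The main obstacle is technical rather than conceptual: one must arrange the regularization so the C\'ordoba--C\'ordoba step is rigorous (enough smoothness and spatial decay of $\theta_\epsilon$), and propagate the $L^\infty$ bound cleanly through the limit by exploiting the $L^p$-bounds at finite $p$ uniformly before letting $p \to +\infty$. All remaining steps are by now classical in the linear theory of drift-diffusion equations with fractional dissipation.
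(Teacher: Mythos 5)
Your proposal is correct and follows essentially the same route as the paper: viscous approximation $\varepsilon\Delta$, uniform $L^p$ bounds for $p\in[1,+\infty]$ on the regularized problem, passage to the limit by compactness, and uniqueness/time-continuity via the energy equality, which is justified by using $v \in L^\infty_t C^{1/2}_x$ to place $v\cdot\nabla\theta$, hence $\p_t\theta$, in $L^2_t H^{-1/2}_x$. The only (inessential) difference is how the endpoint $L^p$ bounds are obtained — you use the C\'ordoba--C\'ordoba inequality plus a $p\to\infty$ limit and a convex smoothing for $p=1$, while the paper invokes the maximum principle for $p=+\infty$ and the maximum principle for the dual problem for $p=1$.
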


\begin{proof}[Proof sketch]
Consider the drift-diffusion equation
\begin{equation}
	\label{eq:viscousapproximation}
	\p_t \theta^\varepsilon + v \cdot \nabla \theta^\varepsilon + \Lambda \theta^\varepsilon = \varepsilon \Delta \theta^\varepsilon.
\end{equation}
Under the given assumptions, there exists a unique solution $\theta^\varepsilon \in C([0,1];L^2(\R^n)) \cap L^2_t H^1_x(Q_1)$ with $\theta^\varepsilon(\cdot,0) = \theta_0$. 
The unique solution also satisfies $\norm{\theta^\varepsilon}_{L^p(Q_1)} \leq \norm{\theta_0}_{L^p(\R^n)}$ for all $p \in [1,+\infty]$. For $p = +\infty$, this is a consequence of the maximum principle. The case $p=1$ may be seen as a consequence of the maximum principle for the dual problem. Upon sending $\varepsilon \to 0^+$, we obtain a solution $\theta \in L^\infty_t L^2_x \cap L^2_t H^{1/2}_x(Q_1)$ with $\theta(\cdot,t) \to \theta_0$ in $L^2(\R^n)$ as $t \to 0^+$ and satisfying the desired estimates. This type of argument is well known from the Navier-Stokes theory. We now demonstrate the energy equality and uniqueness. Consider a solution as above. Since $v \in L^\infty_t C^{1/2}_x(Q_1)$, we have $v \cdot \nabla \theta \in L^2_t H^{-1/2}_x(Q_1)$. Clearly, $\Lambda \theta$ belongs to $L^2_t H^{-1/2}_x(Q_1)$. Hence,  $\p_t \theta \in L^2_t H^{-1/2}_x(Q_1)$. These assumptions give that $\theta(\cdot,t) \in C_{\rm wk}([0,1];L^2(\R^n))$, $\norm{\theta(\cdot,t)}_{L^2(\R^n)}^2 \in C([0,1])$, and $\theta \in C([0,1];L^2(\R^n))$. Now we may justify integrating~\eqref{eq:basiclinearpde} against $\theta \mathbf{1}_{(t_0,t_1)}$, where $0 \leq t_0 < t_1 \leq 1$:
\begin{equation}
	\norm{\theta(\cdot,t_1)}_{L^2(\R^n)}^2 - \norm{\theta(\cdot,t_0)}_{L^2(\R^n)}^2  + 2 \int_{t_0}^{t_1} \int_{\R^n} |\Lambda^{1/2} \theta|^2 \, dx \, dt =  0,
\end{equation}
since $\la v \cdot \nabla \theta, \theta \ra = 0$. If $\theta_0 = 0$, the above equality gives $\theta \equiv 0$ and uniqueness.
\end{proof}

\begin{remark}[On energy equality]
\label{rmk:onenergyequality}
The assumption $v \in L^\infty_t C^{1/2}_x(Q_1)$ is qualitative. The key point is to make sense of $\iint (v \cdot \nabla \theta) \theta \, dx \, dt$. Heuristically, since $\theta$ has $1/2$-derivative in $L^2$, we wish to distribute $\nabla$ among the two copies of $\theta$. This begets a commutator $[\Lambda^{1/2}, v] : L^2(\R^n) \to L^2(\R^n)$ from moving $1/2$-derivative through $v$. If instead one wishes to distribute $1/3$-derivative on each term, one arrives at the criterion in Constantin-E-Titi~\cite{ConstantinETiti}.

It is possible to move away from the H{\"o}lder-$1/2$ condition in the Leray-Hopf setting of~\eqref{eq:sqg} with the additional assumption $\theta \in L^\infty_{t,x}(Q_1)$. Let $v = \vec{R}^\perp \theta \in L^\infty_t \BMO_x \cap L^\infty_t L^2_x \cap L^2_t H^{1/2}_x(Q_1)$. We expand (and substitute $\nabla = \vec{R} \Lambda^{1/2}$):
\begin{equation}
\begin{aligned}
	&\iint (v \cdot \nabla \theta) \theta \, dx \, dt = \iint \left[ \Lambda^{1/2}(\theta v) - (\Lambda^{1/2} \theta) v - \theta (\Lambda^{1/2} v) \right] \cdot (\vec{R} \Lambda^{1/2} \theta )\, dx \, dt \\
	&\quad + \iint \theta \Lambda^{1/2} v \cdot (\vec{R} \Lambda^{1/2} \theta) \, dx \, dt + \iint v \Lambda^{1/2} \theta \cdot (\vec{R} \Lambda^{1/2} \theta) \, dx \, dt.
	\end{aligned}
\end{equation}
Each term on the RHS makes sense. First, we have an endpoint fractional Leibniz estimate (see (1.7) in~\cite{DongLiFractional}):
\begin{equation}
	\norm{ \Lambda^{1/2}(\theta v) - (\Lambda^{1/2} \theta) v - \theta (\Lambda^{1/2} v) }_{L^2_{t,x}(Q_1)} \les \norm{\Lambda^{1/2} \theta}_{L^2_{t,x}(Q_1)} \norm{v}_{L^\infty_t \BMO_x(Q_1)}.
\end{equation}
The second term is estimated routinely:
\begin{equation}
	\iint |\theta \Lambda^{1/2} v \cdot (\vec{R} \Lambda^{1/2} \theta) | \, dx \,dt \les \norm{\theta}_{L^\infty_{t,x}(Q_1)} \norm{\Lambda^{1/2} v}_{L^2_{t,x}(Q_1)} \norm{\Lambda^{1/2} \theta}_{L^2_{t,x}(Q_1)}.
\end{equation}
The third term is estimated by $\mathcal{H}^1$/$\BMO$ duality:
\begin{equation}
	\left| \iint v \Lambda^{1/2} \theta \cdot (\vec{R} \Lambda^{1/2} \theta) \, dx \, dt \right| \les \norm{v}_{L^\infty_t \BMO_x} \norm{\Lambda^{1/2} \theta \vec{R} \Lambda^{1/2}\theta }_{L^1_t \mathcal{H}^1_x(Q_1)},
\end{equation}
since $f \vec{R} f \in \mathcal{H}^1(\R^n)$ when $f \in L^2(\R^n)$~\cite{coifmanrochbergweiss}.

\end{remark}

We now apply the main theorem of Caffarelli and Vasseur in~\cite{CV}. 

\begin{lemma}[H{\"o}lder continuity]
\label{lem:Holdercontinuity}
Let $\div v = 0$ and
\begin{equation}
	\label{eq:driftB}
	\norm{v}_{L^\infty_t \BMO_x(Q_1)} \leq B.
\end{equation}
Then there exists $\alpha_0 = \alpha_0(B) \in (0,1)$ such that, for all $t \in (0,1]$, the unique solution $\theta$ from Lemma~\ref{lem:l2theory} satisfies
\begin{equation}
	\label{eq:initialholder}
	t^{\alpha_0} \norm{\theta(\cdot,t)}_{C^{\alpha_0}(\R^n)} \les_B \norm{\theta_0}_{L^\infty(\R^n)}.
\end{equation}
\end{lemma}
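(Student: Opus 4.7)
The plan is to deduce the estimate directly from the Caffarelli--Vasseur theorem~\cite{CV}, which asserts that bounded weak solutions of the drift-diffusion equation~\eqref{eq:basiclinearpde} with divergence-free drift in $L^\infty_t \BMO_x$ become Hölder continuous after any positive time, with an exponent and constant depending only on the BMO bound of the drift. The role of Lemma~\ref{lem:l2theory} is to provide an honest weak solution to which this regularity statement can be applied, and the role of scaling is to convert Caffarelli--Vasseur's unit-time estimate into an estimate at arbitrary $t \in (0,1]$ with the correct factor $t^{\alpha_0}$.

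First, I would record the $L^\infty$ bound $\norm{\theta}_{L^\infty_{t,x}(Q_1)} \leq \norm{\theta_0}_{L^\infty(\R^n)}$ from Lemma~\ref{lem:l2theory}. Fix $t_0 \in (0,1]$ and rescale by setting
\begin{equation*}
\tilde\theta(x,t) = \theta(t_0 x, t_0 t), \qquad \tilde v(x,t) = v(t_0 x, t_0 t).
\end{equation*}
A direct computation shows that $\tilde\theta$ satisfies~\eqref{eq:basiclinearpde} with drift $\tilde v$ on $\R^n \times (0, 1/t_0)$, that $\div \tilde v = 0$, and that $\norm{\tilde\theta}_{L^\infty_{t,x}} \leq \norm{\theta_0}_{L^\infty}$. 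The key point is that the BMO seminorm is invariant under spatial dilations, so $\norm{\tilde v}_{L^\infty_t \BMO_x} \leq B$; thus the rescaled problem falls under the same hypothesis~\eqref{eq:driftB} but now on a cylinder containing $t=1$.

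Next, I would apply the Caffarelli--Vasseur Hölder estimate to $\tilde\theta$ at time $t=1$ to obtain an $\alpha_0 = \alpha_0(B) \in (0,1)$ and a constant $C(B)$ with
\begin{equation*}
[\tilde\theta(\cdot,1)]_{C^{\alpha_0}(\R^n)} \leq C(B)\, \norm{\theta_0}_{L^\infty(\R^n)}.
\end{equation*}
Unwinding the rescaling via $[\tilde\theta(\cdot,1)]_{C^{\alpha_0}} = t_0^{\alpha_0}\, [\theta(\cdot,t_0)]_{C^{\alpha_0}}$ and combining with the $L^\infty$ bound yields~\eqref{eq:initialholder}. Since $t_0 \in (0,1]$ was arbitrary, the lemma follows.

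The main potential obstacle is a compatibility check rather than a new piece of analysis: one must ensure that the solution produced by Lemma~\ref{lem:l2theory} (an energy-class weak solution with drift that is qualitatively $C^{1/2}$) meets the regularity class in which Caffarelli--Vasseur establish Hölder continuity. This is standard, since the Caffarelli--Vasseur proof uses only the $L^\infty$ bound and the BMO bound on the drift through De Giorgi iteration, together with the level-set and localization machinery which only requires that $\theta$ is a bounded weak solution in the sense of the energy inequality; the qualitative smoothness of $\tilde v$ inherited from $v \in L^\infty_t C^{1/2}_x$ is more than enough to justify the test-function manipulations in their argument. No estimate is lost in passing to these hypotheses, so one obtains an $\alpha_0 = \alpha_0(B)$ depending only on $B$, as stated.
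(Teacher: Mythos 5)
Your proposal is correct and is essentially the paper's argument: the paper proves this lemma simply by invoking the main theorem of Caffarelli--Vasseur~\cite{CV} applied to the unique energy-class solution of Lemma~\ref{lem:l2theory}, with the time weight $t^{\alpha_0}$ obtained exactly as you describe, from the scaling invariance of the equation and of the $L^\infty$ and $\BMO$ norms. Your closing compatibility check (that the weak solution of Lemma~\ref{lem:l2theory} lies in the class treated by~\cite{CV}) is a reasonable point the paper leaves implicit, and it does not change the substance of the argument.
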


Lemma~\ref{lem:Holdercontinuity} can also be recovered from the fundamental solution bounds of Maekawa and Miura in~\cite[Theorem 1.2]{MaekawaMiura-drift}.

\begin{lemma}[$C^{1,\alpha_0}_x$ regularity]
\label{lem:higherreg}
Let~\eqref{eq:driftB} be satisfied and, for all $t \in (0,1]$,
\begin{equation}
	\label{eq:driftN}
	t^{\alpha_0} [v(\cdot,t)]_{C^{\alpha_0}(\R^n)} \leq N,
\end{equation}
where $\alpha_0$ is the exponent in Lemma~\ref{lem:Holdercontinuity}.
Then the unique solution $\theta$ in Lemma~\ref{lem:l2theory} satisfies
\begin{equation}
	\label{eq:higherholder}
	t^{1+\alpha_0} [\theta(\cdot,t)]_{C^{1,\alpha_0}_x(\R^n)} \les_{B,N} \norm{\theta_0}_{L^\infty(\R^n)}.
\end{equation}
\end{lemma}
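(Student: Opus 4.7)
The plan is to bootstrap from the $C^{\alpha_0}_x$ estimate of Lemma~\ref{lem:Holdercontinuity} to the $C^{1,\alpha_0}_x$ estimate via a fractional Schauder argument for the non-local drift-diffusion equation. To make all manipulations rigorous, I would first regularize $v$ and $\theta_0$, work with the resulting smooth solutions, prove the quantitative bound in that setting with constants depending only on $B$ and $N$, and then recover the estimate for the original solution by passing to the limit using compactness granted by the already-established Hölder control.

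Starting from the Duhamel representation based at $t_0 = t/2$,
\begin{equation*}
	\theta(\cdot,t) = e^{-(t-t_0)\Lambda}\theta(\cdot,t_0) - \int_{t_0}^t e^{-(t-s)\Lambda}(v\cdot\nabla\theta)(\cdot,s)\, ds,
\end{equation*}
I would handle the linear term by the Poisson-semigroup smoothing estimate $\|e^{-\tau\Lambda}f\|_{\dot C^{1,\alpha_0}(\R^n)} \lesssim \tau^{-(1+\alpha_0)}\|f\|_{L^\infty(\R^n)}$, applied at $\tau = t/2$ and combined with the maximum principle $\|\theta(\cdot,t_0)\|_{L^\infty}\leq \|\theta_0\|_{L^\infty}$ from Lemma~\ref{lem:l2theory}; this gives the $t^{-(1+\alpha_0)}\|\theta_0\|_{L^\infty}$ bound claimed in~\eqref{eq:higherholder}. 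For the Duhamel integral, I would exploit $\div v = 0$ to rewrite $v\cdot\nabla\theta = \nabla\cdot(v\theta)$ and transfer the divergence onto the Poisson kernel via integration by parts, reducing the task to estimating $\nabla_x \int_{t_0}^t P_{t-s} * (v\theta)(\cdot,s)\, ds$ in $\dot C^{\alpha_0}$. Since both $\theta(\cdot,s)$ (via Lemma~\ref{lem:Holdercontinuity}) and $v(\cdot,s)$ (via~\eqref{eq:driftN}) carry $C^{\alpha_0}_x$ semi-norms scaling like $s^{-\alpha_0}$, and the Poisson kernel obeys $\|\nabla^k P_\tau\|_{L^1}\lesssim \tau^{-k}$, the resulting $s$-integral over $(t/2,t)$ converges at the right rate.

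The main obstacle is that $v$ is globally controlled only in $\BMO$, not in $L^\infty$, so the product $v\theta$ is not a priori a bounded function and the naive Young-type convolution bounds against the Poisson kernel fail. I would handle this with a Schauder-style ``freezing of coefficients'' trick: at each evaluation point $x_0$, decompose $v(y,s)\theta(y,s) = v(x_0,s)\theta(y,s) + [v(y,s)-v(x_0,s)]\theta(y,s)$. The constant factor $v(x_0,s)$ can be absorbed into a time-dependent change of coordinates that commutes with $\Lambda$, while the remainder obeys the pointwise bound $|v(y,s)-v(x_0,s)|\lesssim N s^{-\alpha_0}|y-x_0|^{\alpha_0}$, which pairs favorably against moments of $\nabla P_{t-s}$ since $\int |y|^{\alpha_0}|\nabla P_\tau(y)|\,dy\lesssim \tau^{\alpha_0-1}$. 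Running the same computation at two points $x_0$ and $x_0+h$ and comparing the resulting explicit integrals controls the $\alpha_0$-Hölder semi-norm of $\nabla\theta(\cdot,t)$ at the claimed scale $t^{-(1+\alpha_0)}\|\theta_0\|_{L^\infty}$, finishing the proof.
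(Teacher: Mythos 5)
There is a genuine gap in the core step. Your plan treats the drift term (after freezing $v(x_0,s)$) as an external forcing in divergence form and tries to extract the full $C^{1,\alpha_0}_x$ gain from a single Duhamel pass, but the derivative count does not close because the dissipation is only of order one and the drift is critical. Concretely, to bound $[\nabla\Theta(\cdot,t)]_{C^{\alpha_0}}$ with $\Theta=\int_{t/2}^t P_{t-s}*\nabla\cdot(v\theta)(s)\,ds$ you need two derivatives of the Poisson kernel acting on a function with only $C^{\alpha_0}$ (or pointwise $|y-x_0|^{\alpha_0}$) control: the remainder $[v(\cdot,s)-v(x_0,s)]\theta(\cdot,s)$ pairs with $\nabla^2P_{t-s}$ to give a cost $(t-s)^{\alpha_0-2}$, and $\int_{t/2}^t(t-s)^{\alpha_0-2}\,ds$ diverges; even the plain $L^\infty$ bound on $\nabla\Theta$ fails this way, and the two-point near/far splitting you propose only yields increments of size $|h|^{\alpha_0-1}$, i.e.\ it recovers $C^{\alpha_0}$, not $C^{1,\alpha_0}$. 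This is not a technicality: since $v\in C^{\alpha_0}$ at best, the product can never be better than $C^{\alpha_0}$, so no amount of feeding the conclusion back into the forcing fixes the exponent. The gain of a full derivative at critical order requires an absorption/iteration mechanism — e.g.\ a Campanato-type improvement over dyadic scales exploiting that the oscillation of $v$ on $B_r$ is $O(r^{\alpha_0})$, with the gradient term absorbed by interpolation — which is precisely the content of Silvestre's interior Schauder estimate and is absent from your sketch.

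This is in fact how the paper proceeds: it invokes Silvestre's estimate \cite[Theorem 1.1]{SilvestreHigher} as a black box, and the actual work of the lemma is elsewhere — Silvestre's constant depends on the \emph{inhomogeneous} norm $\|v\|_{L^\infty_tC^{\alpha_0}_x}$, which is not controlled here, so the paper changes variables along a Lagrangian trajectory $\dot y(t)=v(y(t),t)$, setting $\tilde\theta(x,t)=\theta(x+y(t),t)$ and $\tilde v(x,t)=v(x+y(t),t)-\dot y(t)$, so that $\tilde v(0,t)=0$ and the local $L^\infty$ size of $\tilde v$ is controlled by the seminorm $[v(\cdot,t)]_{C^{\alpha_0}}$; the estimate then follows by translation/scaling invariance, the maximum principle, and an approximation argument. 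Your "time-dependent change of coordinates" to remove the frozen value of $v$ is the same idea in spirit, but it cannot substitute for the missing Schauder machinery; to repair your proof you would either have to reprove a Silvestre-type interior estimate (iteration over scales), or cite it and then supply exactly the trajectory-shift argument above to pass from the seminorm hypothesis \eqref{eq:driftN} to the inhomogeneous norm his theorem requires.
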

\begin{proof}
Recall Silvestre's estimate in~\cite[Theorem 1.1]{SilvestreHigher}:
\begin{equation}
	\label{eq:silvestreestimate}
	\norm{\theta}_{L^\infty_t C^{1,\alpha}_x(B_{1/2} \times (-1/2,1))} \leq C \left[ \norm{\theta}_{L^\infty_{t,x}(\R^n \times (-1,0))} + \norm{f}_{L^\infty_t C^\alpha_x(\R^n \times (-1,0))} \right],
\end{equation}
for solutions of~\eqref{eq:basiclinearpde} on $B_1 \times (-1,0)$ with RHS $f$.\footnote{Since the above energy class solutions are unique, one may use the viscous approximation~\eqref{eq:viscousapproximation} to justify the  application of Silvestre's estimates in~\cite{SilvestreHigher}. This is discussed in Section~3.2 of~\cite{SilvestreHigher}.} In this proof, $f=0$, but we mention it to use in Lemma~\ref{lem:higherspatialreg}. Notably, Silvestre's constant $C$ depends on $\alpha \in (0,1)$ and the \emph{inhomogeneous} norm $\norm{v}_{L^\infty_t C^{\alpha}_x(B_1 \times (-1,0))}$, whereas in~\eqref{eq:driftN} we control only the H{\"o}lder \emph{seminorm}.\footnote{Recall also that, in~\cite{SilvestreHigher}, $v$ may have non-zero divergence.} We will demonstrate how to get around this. Let $\theta_0 \in C^\infty_0(\R^n)$ and $v$ be smooth, compactly supported, and divergence free.  We will apply~\eqref{eq:silvestreestimate} to justify
\begin{equation}
	\label{eq:newestimatetojustify}
	[\theta(\cdot,1)]_{C^{1,\alpha_0}_x(\R^n)} \leq C \norm{\theta}_{L^\infty_{t,x}(\R^n \times (1/2,1))},
\end{equation}
where $C$ depends on $\alpha_0$ and $\sup_{t \in (1/2,1)} [v(\cdot,t)]_{C^{\alpha_0}(\R^n)} \les N$. Then~\eqref{eq:higherholder} will follow from the maximum principle, scaling invariance, and a weak-$\ast$ approximation argument. Notice the translation invariance of the norms in~\eqref{eq:newestimatetojustify}. Let $x_0 \in \R^n$ and $y(t)$ be the solution of the ODE $\dot y(t) = v(y(t),t)$ with $y(1) = x_0$. To estimate $[\theta(\cdot,1)]_{C^{1,\alpha_0}(B(x_0,1/4))}$, we consider $\tilde{\theta}(x,t) = \theta(x+y(t),t)$ and $\tilde{v}(x,t) = v(x+y(t),t) - \dot y(t)$, which solve the PDE
\begin{equation}
	\p_t \tilde{\theta} + \tilde{v} \cdot \nabla \tilde{\theta} + \Lambda \tilde{\theta} = 0.
\end{equation}
Notice that $\tilde{v}(0,t) = 0$ and $[\tilde{v}(\cdot,t)]_{C^{\alpha_0}(\R^n)} = [v(\cdot,t)]_{C^{\alpha_0}(\R^n)}$. Therefore, $\norm{\tilde{v}}_{L^\infty_{t,x}(B_{1/2} \times (1/2,1))}$ is controlled by $\sup_{t \in (1/2,1)} [\tilde{v}]_{C^{\alpha_0}(B_{1/2} \times (1/2,1))}$. Hence, we may apply a translated and rescaled version of Silvestre's estimate~\eqref{eq:silvestreestimate} to $\tilde{v}$. Since $x_0$ was arbitrary, we obtain~\eqref{eq:newestimatetojustify}. The proof is complete.
\end{proof}

In particular, the equation~\eqref{eq:basiclinearpde} is satisfied pointwise a.e. in $Q_1$.

The main result of this section is

\begin{lemma}[$L^\infty$ theory]
\label{lem:Linftytheory}
Let the divergence-free drift $v$ satisfy the $\BMO$ estimate~\eqref{eq:driftB} and the H{\"o}lder continuity estimate~\eqref{eq:driftN}. We do not ask that $v \in L^\infty_t C^{1/2}_x(Q_1)$. Assume that $p > n$ and
\begin{equation}
	\label{eq:driftV}
	\norm{v}_{L^\infty_t (X_p)_x(Q_1)} \leq V.
\end{equation}

(Existence) Let $\theta_0 \in L^\infty(\R^n)$. Then there exists a solution $\theta \in L^\infty_{t,x}(Q_1)$ of~\eqref{eq:basiclinearpde} that satisfies the maximum principle
\begin{equation}
	\norm{\theta}_{L^\infty_{t,x}(Q_1)} \leq \norm{\theta_0}_{L^\infty(\R^n)}
\end{equation}
and the \emph{a priori} H{\"o}lder estimates~\eqref{eq:initialholder} and~\eqref{eq:higherholder}. This solution attains its initial data in the sense
\begin{equation}\label{eq:attaininitialdatabounded}
	\theta(\cdot,t) \wstar \theta_0 \text{ in } L^\infty(\R^n)
\end{equation}
as $t \to 0^+$.

 (Uniqueness) Let $\theta \in L^\infty_{t,x}(Q_1)$ be a solution of~\eqref{eq:basiclinearpde} with $\theta(\cdot,t) \wstar 0$ as $t \to 0^+$ and satisfying that, for all compact $K = K_1 \times K_2 \subset Q_1$, there exists $\beta \in (0,1)$ such that $\theta \in L^\infty_t C^{1,\beta}_x(K)$. Then $\theta \equiv 0$. From now on, `solution' refers to the unique solution in this class.

 (Continuity by compactness and uniqueness) Let $\theta^{(k)}$, $k \in \N$, be solutions to~\eqref{eq:basiclinearpde} with divergence-free drifts $v^{(k)}$ satisfying
 \begin{equation}
	v^{(k)} \wstar v \text{ in the sense of distributions}
 \end{equation}
 and initial data
  \begin{equation}
	\theta^{(k)}_0 \wstar \theta_0 \text{ in } L^\infty(\R^n).
 \end{equation}
 Assume that each $v^{(k)}$ satisfies the inequalities~\eqref{eq:driftB},~\eqref{eq:driftN}, and~\eqref{eq:driftV}. (By lower semi-continuity of the relevant norms, $v$ satisfies the same inequalities.)
Then $\theta^{(k)}$ converges to the solution $\theta$ of~\eqref{eq:basiclinearpde} with drift $v$ and initial data $\theta_0$, obtained as in \eqref{eq:attaininitialdatabounded}, in the following senses (among others):
 \begin{equation}
 	\label{eq:linftyconv}
	\theta^{(k)} \wstar \theta \text{ in } L^\infty_{t,x}(Q_1),
 \end{equation}
 \begin{equation}
 	\label{eq:betterconv}
	\theta^{(k)} \to \theta \text{ in } L^\infty_t C^{1,\beta}_x(K)
 \end{equation}
 for all $\beta \in (0,\alpha_0)$ and on all compact sets $K = K_1 \times K_2 \subset \R^n \times (0,1]$.
\end{lemma}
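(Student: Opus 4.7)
\emph{Existence.} The plan is to regularize and pass to the limit. Approximate $\theta_0$ by $\theta_0^{(\varepsilon)} \in L^1 \cap L^\infty$ with $\|\theta_0^{(\varepsilon)}\|_{L^\infty} \le \|\theta_0\|_{L^\infty}$ and $\theta_0^{(\varepsilon)} \wstar \theta_0$, and approximate $v$ by a smooth divergence-free $v^{(\varepsilon)}$ satisfying the hypotheses of Lemma~\ref{lem:l2theory} while preserving the bounds \eqref{eq:driftB}, \eqref{eq:driftN}, \eqref{eq:driftV} uniformly. The resulting solutions $\theta^{(\varepsilon)}$ from Lemma~\ref{lem:l2theory} obey the maximum principle $\|\theta^{(\varepsilon)}\|_{L^\infty_{t,x}} \le \|\theta_0\|_{L^\infty}$, and Lemmas~\ref{lem:Holdercontinuity} and~\ref{lem:higherreg} supply $\varepsilon$-independent $L^\infty_t C^{1,\alpha_0}_x$ estimates on compact subsets of $Q_1$. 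Arzelà--Ascoli produces a subsequential limit $\theta$ converging in $L^\infty_t C^{1,\beta}_x(K)$ for each $\beta < \alpha_0$ and compact $K \subset Q_1$. Passage to the limit in~\eqref{eq:basiclinearpde} combines the Hölder convergence of $\nabla \theta^{(\varepsilon)}$ with local $L^p$ convergence of $v^{(\varepsilon)} \to v$ afforded by $v \in X_p$; the maximum principle and the estimates \eqref{eq:initialholder}, \eqref{eq:higherholder} survive. For \eqref{eq:attaininitialdatabounded}, test against $\phi \in C_c^\infty(\mathbb{R}^n)$: since $\Lambda\phi \in L^1(\mathbb{R}^n)$ and $v \in L^\infty_t(X_p)_x$, the weak form makes $t \mapsto \int \theta(\cdot,t)\phi\,dx$ continuous at $t = 0$ with value $\int \theta_0 \phi\,dx$ (first verified for $\theta^{(\varepsilon)}$ and passed to the limit); $L^1$-density of $C_c^\infty$ and the uniform $L^\infty$ bound yield weak-$\ast$ convergence.

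\emph{Uniqueness (and the main obstacle).} Argue by duality. Given $\theta$ in the uniqueness class, $\phi_0 \in C_c^\infty(\mathbb{R}^n)$, and $T \in (0,1]$, solve the backward problem
\begin{equation*}
    -\partial_s \phi - v \cdot \nabla \phi + \Lambda \phi = 0 \text{ on } \mathbb{R}^n \times (0,T), \qquad \phi(\cdot,T) = \phi_0,
\end{equation*}
by applying the existence part above to the time-reversed drift $\tilde v(x,s) = -v(x,T-s)$, which inherits the same bounds. An $L^1$ bound $\|\phi(\cdot,s)\|_{L^1} \le \|\phi_0\|_{L^1}$ follows from a C\'ordoba--C\'ordoba pointwise inequality for $|\phi|$ combined with mass preservation under divergence-free transport and $\Lambda$, giving $\phi \in C([0,T];L^1)$. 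Using the $C^{1,\beta}_x$ regularity of $\theta$ and $\phi$ on $(0,T]$, $\div v = 0$, and self-adjointness of $\Lambda$, one justifies
\begin{equation*}
    \tfrac{d}{dt} \int \theta(\cdot,t)\phi(\cdot,t)\,dx = 0 \text{ on } (0,T].
\end{equation*}
Sending $t \to 0^+$, the pairing tends to zero because $\theta(\cdot,t) \wstar 0$ and $\phi \in C([0,T];L^1)$, whence $\int \theta(\cdot,T)\phi_0\,dx = 0$; since $\phi_0$ and $T$ are arbitrary, $\theta \equiv 0$. This is the main obstacle: weak-$\ast$ attainment of zero data provides no quantitative smallness at $t=0$, so a forward energy or maximum-principle argument is unavailable, and the backward-dual construction together with the $L^1$ bound under only $\BMO$ and $X_p$ control on $v$ is the delicate step.

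\emph{Continuity.} The bounds on $v^{(k)}$ plug uniformly into Lemmas~\ref{lem:Holdercontinuity} and~\ref{lem:higherreg}; together with the uniform maximum principle, Arzelà--Ascoli extracts a subsequential limit $\bar\theta$ of $\theta^{(k)}$ converging in $L^\infty_t C^{1,\beta}_x(K)$ for every compact $K \subset \mathbb{R}^n \times (0,1]$ and $\beta < \alpha_0$. Passage to the limit in \eqref{eq:basiclinearpde} (as in existence) and in the weak form of the initial data shows $\bar\theta$ is a solution with drift $v$ and data $\theta_0$; by uniqueness, $\bar\theta = \theta$. Since every subsequence admits a further subsequence converging to $\theta$, the full sequence converges to $\theta$, yielding \eqref{eq:betterconv}; combined with the uniform $L^\infty$ bound, this also gives \eqref{eq:linftyconv}.
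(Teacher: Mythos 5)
Your uniqueness argument follows the same duality skeleton as the paper's proof, but the crux --- strong continuity of the adjoint solution at $t=0$ --- is asserted rather than established, and the route you propose cannot deliver it. First, the time-reversed drift does \emph{not} ``inherit the same bounds'': \eqref{eq:driftN} only controls $t^{\alpha_0}[v(\cdot,t)]_{C^{\alpha_0}}$, so the H\"older seminorm may blow up like $t^{-\alpha_0}$ as $t\to 0^+$; after reversal this degeneracy sits at the \emph{terminal} time of the backward evolution, i.e.\ exactly at $t=0$, which is where you need $\phi$ to be well behaved. Second, the existence part of the lemma (which you invoke for the reversed problem) gives only weak-$\ast$ attainment of the data and interior H\"older bounds; it says nothing about strong $L^1$ (or $L^2$) continuity at the opposite endpoint, and the C\'ordoba--C\'ordoba/$L^1$-contraction argument yields at best monotonicity of $\|\phi(\cdot,t)\|_{L^1}$, not continuity at $t=0$. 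This matters because if $\phi(\cdot,t)$ converges only weakly as $t\to 0^+$, the pairing $\int \theta(\cdot,t)\phi(\cdot,t)\,dx$ need not tend to zero even though $\theta(\cdot,t)\wstar 0$: you are pairing two merely weakly convergent families. The paper identifies precisely this as the delicate point --- energy equality for the adjoint problem cannot be justified since $v\notin L^\infty_t C^{1/2}_x$ --- and resolves it by constructing the adjoint solution through the Maekawa--Miura fundamental solution estimates, which provide an adjoint solution strongly continuous in $L^2$ (hence usable in the pairing) up to $t=0$. Without this ingredient, or an equivalent substitute, the uniqueness proof has a genuine gap.

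Two further points are repairable but not free. The identity $\tfrac{d}{dt}\int\theta\phi\,dx=0$ cannot be obtained merely from self-adjointness of $\Lambda$ and $\div v=0$: $\theta$ does not decay and $v$ is only controlled in $X_p$, so the integrations by parts must be localized with cutoffs $\chi_R$, and one must estimate the flux term $\iint (v\cdot\nabla\chi_R)\,\theta\phi\,dx\,dt$ and the commutator term $\iint \theta\,[\Lambda,\chi_R]\phi\,dx\,dt$ using $p>n$ and $\phi(\cdot,t)\in L^{p'}(\R^n)$ before sending $R\to+\infty$, as the paper does. Likewise, in your existence and continuity steps Arzel\`a--Ascoli alone does not yield convergence in $L^\infty_t C^{1,\beta}_x(K)$: the uniform bounds \eqref{eq:initialholder} and \eqref{eq:higherholder} are purely spatial, so some compactness in time is needed, e.g.\ the uniform estimate $\p_t\theta^{(k)}=-\Lambda\theta^{(k)}-\div(v^{(k)}\theta^{(k)})$ in $L^\infty_t (B^{-1}_{\infty,\infty})_x + L^\infty_t W^{-1,p}_x$ locally, combined with the Aubin--Lions lemma; this is also how the paper obtains the weak-$\ast$ continuity of $t\mapsto\theta(\cdot,t)$ used to attain the initial data.
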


It is possible to prove a more quantitative stability theorem than the one above, but it is not necessary here.

\begin{proof}
(Existence) This follows from an approximation argument with initial data $\theta_0^{(k)} \in L^1 \cap L^\infty(\R^n)$ and drifts $v^{(k)} \in L^\infty_t C^{1/2}_x(Q_1)$, where $k \in \N$.

(Uniqueness) We prove uniqueness by a duality argument, which the first author employed in a similar context in~\cite{albrittonbeekie2020long}. The crux of the matter is that the initial data is only assumed to attain its initial data $\ast$-weakly in $L^\infty(\R^n)$. What we must rule out is the possibility that $\norm{\theta(\cdot,t)}_{L^\infty(\R^n)}$ `jumps up' instantaneously at the initial time.  At a technical level, we require that the solution of the adjoint problem is strongly continuous at $t=0$. This is subtle because, for the adjoint problem, one can no longer easily justify a computation that would give energy equality. Instead, we defer to the fundamental solution estimates of Maekawa and Miura~\cite{MaekawaMiura-drift}, Theorem~1.2 and Remark~1.3, which ensure that nonetheless, there exists an adjoint solution that is strongly continuous in $L^2(\R^n)$. Let $T \in (0,1)$. Let $\psi_0 \in L^1 \cap L^\infty(\R^n)$. There exists $\psi \in C([0,T];L^1(\R^n)) \cap L^\infty_{t,x} \cap L^2_t H^{1/2}_x(Q_T)$ solving the adjoint equation
\begin{equation}
	-\p_t \psi - v \cdot \nabla \psi + \Lambda \psi = 0
\end{equation}
in $Q_T$ with final data $\psi(T) = \psi_0$. This solution can be chosen to satisfy, for all compact $K = K_1 \times K_2 \subset Q_T$, there exists $\gamma \in (0,1)$ such that $\psi \in L^\infty_t C^{1,\gamma}_x(K)$.\footnote{These H{\"o}lder estimates, which we derived before, can be justified at the level of the approximation procedure in~\cite{MaekawaMiura-drift}.}

 Let $0 < t_0 < t_1 < T$. Let $R, \varepsilon > 0$.
Let $\chi \in C^\infty_0(B_2)$ with $\chi \equiv 1$ on $B_1$ and $\chi_R = \chi(x/R)$. Let $\varphi^{t_0,t_1}_\varepsilon$ be a mollification of the indicator function $\mathbf{1}_{(t_0,t_1)}$ at scale $\varepsilon \ll 1$. We test the equation~\eqref{eq:basiclinearpde} against $\psi \chi_R \varphi^{t_0,t_1}_\varepsilon$. To simplify notation, we omit $R$, $\varepsilon$ and $t_0,t_1$. This gives
\begin{equation}
\begin{aligned}
	&\iint \underbrace{(\p_t + v \cdot \nabla + \Lambda) \theta}_{= 0 } \psi \chi \varphi \, dx \, dt \\
	&\quad = \iint \underbrace{(-\p_t - v \cdot \nabla + \Lambda) \psi}_{= 0} \theta \chi \varphi \, dx \, dt \\
	&\quad\quad + \iint (-\p_t - v \cdot \nabla )(\chi \varphi) \theta \psi +  \varphi \theta [\Lambda, \chi] \psi \, dx \, dt.
	\end{aligned}
\end{equation}
First, we send $\varepsilon \to 0^+$. For a.e. $t_0, t_1 \in (0,T)$, we have
\begin{equation}
	\iint -\p_t \varphi^{t_0,t_1}_\varepsilon \chi_R \theta \psi \to \int \chi_R \theta(x,t_1) \psi(x,t_1) \, dx - \int \chi_R \theta(x,t_0) \psi(x,t_0) \, dx.
\end{equation}
The other terms are well behaved, and we have
\begin{equation}
	\int \chi_R \theta(x,t_1) \psi(x,t_1) \, dx - \int \chi_R \theta(x,t_0) \psi(x,t_0) \, dx = \int_{t_0}^{t_1} \int_{\R^n}  v \cdot \nabla \chi_R \theta \psi +  \theta [\Lambda, \chi_R] \psi \, dx \, dt.
\end{equation}
Since $\theta(\cdot,t)$ is weak-$\ast$ continuous in $L^\infty(\R^n)$ on $[0,1]$ and $\psi(\cdot,t)$ is strongly continuous in $L^2(\R^n)$ on $[0,T]$, we may extend the above equality to all $t_0, t_1 \in [0,T]$. Second, we send $R \to +\infty$. Notice that
\begin{equation}
	\left| \int_{t_0}^{t_1} \int_{\R^n} (v \cdot \nabla \chi_R) \theta \psi \, dx \, dt \right| \les R^{-1+n/p} \norm{v}_{L^\infty_t X_p(Q_1)} \norm{\theta}_{L^\infty_{t,x}(Q_1)} \norm{\psi}_{L^\infty_t L^{p'}_x(Q_T)} \to 0,
\end{equation}
since $p > n$. Additionally,
\begin{equation}
	\label{eq:estimateihavetofix}
	\left| \iint \theta [\Lambda, \chi_R] \psi \, dx \, dt \right| \les R^{-1+n/p} \norm{\theta}_{L^\infty_{t,x}(Q_1)} \norm{\psi}_{L^\infty_t L^{p'}_x(Q_T)} \to 0.
\end{equation}
Here, $p'$ is the H{\"o}lder conjugate of $p$.
The estimate~\eqref{eq:estimateihavetofix} for $|x| \leq 2R$ follows from H{\"o}lder's inequality and the Calder{\'o}n commutator estimate $\norm{ [\Lambda, \chi_R] }_{L^q(\R^n) \to L^q(\R^n)} \les_q R^{-1}$ for all $q \in (1,+\infty)$.  To show~\eqref{eq:estimateihavetofix} for $|x| \geq 2R$, we require the pointwise bound
\begin{equation*}
	|[\Lambda,\chi_R] \psi(x,t)| = c_n \left| \int_{\R^n} \frac{\chi_R(y)}{|x-y|^{n+1}} \psi(y,t) \, dy \right| \les_p |x| ^{-(n+1)+n/p} \norm{\psi(\cdot,t)}_{L^{p'}(\R^n)}.
\end{equation*}
Combining the estimates in the regions $|x| \leq 2R$ and $|x| \geq 2R$ gives~\eqref{eq:estimateihavetofix}.
Next, the boundary terms in time are well behaved, since $\theta \in L^\infty(\R^n)$ and $\psi \in L^1(\R^n)$ for all $t \in [0,T]$. Hence,
\begin{equation}
	\int \theta(x,t_1) \psi(x,t_1) \, dx = \int \theta(x,t_0) \psi(x,t_0) \, dx.
\end{equation}
With $t_1 = T$ and $t_0 = 0$, we have
\begin{equation}
	\int \theta(x,T) \psi_0 \, dx = 0.
\end{equation}
Since $\psi_0$ was arbitrary, we have $\theta(\cdot,T) \equiv 0$. Finally, since $T$ was arbitrary, we have $\theta \equiv 0$.

(Continuity) The main work is to prove that the convergence~\eqref{eq:betterconv} holds and $\theta(\cdot,t) \wstar \theta_0$ as $t \to 0^+$. This is enough to demonstrate that $\theta$ belongs to our uniqueness class. We must estimate the time derivatives:
\begin{equation}
	\label{eq:timederivest}
	\p_t \theta^{(k)} = - \Lambda \theta^{(k)} - \div (v^{(k)} \theta^{(k)}) \in L^\infty_t (B^{-1}_{\infty,\infty})_x(Q_1) + L^\infty_t W^{-1,p}_x(B_R \times (0,1)),
\end{equation}
for all $R > 0$, with uniform bounds depending only on $M,B,N,V,p,R$, where $M$ denotes an upper bound for the norms of $\theta^{(k)}$ in $L^\infty(\R^n)$. Then~\eqref{eq:betterconv} follows from the \emph{a priori} $C^{1,\alpha_0}_x$ estimate~\eqref{eq:higherholder} and the Aubin--Lions lemma~\cite{simonforaubinlions}.\footnote{Recall that the spaces in Simon's refinement~\cite{simonforaubinlions} of the Aubin--Lions lemma are not required to be reflexive.} Moreover, the estimate~\eqref{eq:timederivest} on the time derivative gives that $\theta^{(k)}(\cdot,t) \wstar \theta(\cdot,t)$ in the sense of distributions for each $t \in [0,1]$. Similar arguments are well known from the Navier-Stokes literature. The estimate~\eqref{eq:timederivest} is valid also for $\theta$ and ensures that $\theta \: [0,1] \to L^\infty(\R^n)$ is weakly-$\ast$ continuous. This yields that the initial data $\theta_0$ is attained in the desired sense and completes the proof.
\end{proof}



We now specialize to the system~\eqref{eq:sqg} in dimension two.

\begin{lemma}[Higher spatial regularity of SQG solutions]
	\label{lem:higherspatialreg}
Let $\theta$ be the solution of Lemma~\ref{lem:Linftytheory} with dimension $n=2$. Assume that $v = \vec{R}^\perp \theta$, that is, $\theta$ is a solution of~\eqref{eq:sqg}, and $\norm{\theta_0}_{L^\infty(\R^2)} \leq M$. Then
\begin{equation}
	\sup_{t > 0} t^k \norm{\nabla_x^k \theta}_{L^\infty(\R^2)} \les_M 1
\end{equation}
for all $t \in (0,1]$ and integers $k \geq 0$.
\end{lemma}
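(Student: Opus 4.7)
The proof plan is induction on $k$. The cases $k=0,1$ are already in hand: $k=0$ is the maximum principle from Lemma~\ref{lem:Linftytheory}, and for $k=1$ one checks that the drift $v = \vec{R}^\perp \theta$ satisfies the hypotheses of Lemma~\ref{lem:higherreg}. By the $L^\infty\!\to\!\BMO$ boundedness of Riesz transforms, $\|v\|_{L^\infty_t\BMO_x}\les M$, giving~\eqref{eq:driftB}; by Lemma~\ref{lem:Holdercontinuity} together with the fact that Riesz transforms preserve the homogeneous H\"older seminorm, $t^{\alpha_0}[v(\cdot,t)]_{C^{\alpha_0}}\les_M 1$, giving~\eqref{eq:driftN}. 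Lemma~\ref{lem:higherreg} then yields $t^{1+\alpha_0}[\theta(\cdot,t)]_{C^{1,\alpha_0}}\les_M 1$, which interpolates with the maximum principle to $t\|\nabla\theta(\cdot,t)\|_{L^\infty}\les_M 1$.

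For the inductive step, I would strengthen the hypothesis to $t^j\|\nabla^j\theta\|_{L^\infty} + t^{j+\alpha_0}[\nabla^j\theta]_{C^{\alpha_0}}\les_M 1$ for $0\leq j\leq k$, and differentiate~\eqref{eq:sqg} $k$ times to obtain
$$\p_t(\nabla^k\theta) + v\cdot\nabla(\nabla^k\theta) + \Lambda(\nabla^k\theta) = f_k,\qquad f_k=-\sum_{j=1}^{k}\binom{k}{j}(\nabla^j v)\cdot\nabla(\nabla^{k-j}\theta).$$
Since $\nabla^j v = \vec{R}^\perp\nabla^j\theta$ and Riesz transforms are bounded on $C^{\alpha_0}$, the inductive hypothesis controls $f_k$ in $L^\infty_t C^{\alpha_0}_x$ with the expected time weight. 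I would then apply Silvestre's estimate~\eqref{eq:silvestreestimate} to upgrade $\nabla^k\theta$ from $C^{\alpha_0}_x$ to $C^{1,\alpha_0}_x$, handling the fact that~\eqref{eq:silvestreestimate} requires the inhomogeneous H\"older norm of $v$ by the same translation trick used in the proof of Lemma~\ref{lem:higherreg}: move along an integral curve $\dot y(t)=v(y(t),t)$ and subtract $\dot y(t)$ from the drift, so the shifted drift vanishes at the origin and is controlled on $B_{1/2}$ by its $C^{\alpha_0}$ seminorm. To recover the correct power $t^{k}$, I would invoke the SQG scaling $\theta\mapsto\theta(\lambda x,\lambda t)$ and rescale a parabolic cylinder of size $\sim t_*$ at an arbitrary base point to unit size, apply the unit-scale estimate, and unscale.

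The main obstacle is the bookkeeping in $f_k$: a Leibniz-type rule in $C^{\alpha_0}$ must be used to distribute $L^\infty$ and H\"older norms across the factors $\nabla^j v$ and $\nabla^{k-j}\theta$ so that the time weights align to give $\|f_k(\cdot,t)\|_{C^{\alpha_0}}\les_M t^{-(k+\alpha_0)}$. It is essential that the inductive hypothesis carries both $L^\infty$ \emph{and} $C^{\alpha_0}$ bounds on $\nabla^j\theta$, because Riesz transforms fail to be bounded on $L^\infty$, so one cannot convert an $L^\infty$ bound on $\nabla^j\theta$ directly into one on $\nabla^j v$; working inside $C^{\alpha_0}$ at every intermediate level circumvents this obstruction, and the resulting $C^{1,\alpha_0}$ output of Silvestre's estimate then propagates the strengthened hypothesis to level $k+1$.
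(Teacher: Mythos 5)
Your proposal is correct and follows essentially the same route as the paper: induct, differentiate the equation so the forcing involves only derivatives of $v$, control it in $C^{\alpha_0}_x$ via Riesz boundedness on homogeneous H\"older spaces plus interpolation, apply Silvestre's estimate~\eqref{eq:silvestreestimate} with the translation-along-an-integral-curve trick from Lemma~\ref{lem:higherreg}, and recover the time weights by scaling. The only cosmetic difference is that the paper phrases the induction hypothesis as a bound on $[\theta(\cdot,t),v(\cdot,t)]_{C^{k,\alpha_0}_x}$ rather than on $\|\nabla^j\theta\|_{L^\infty}$ together with $[\nabla^j\theta]_{C^{\alpha_0}}$, which is equivalent in substance.
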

\begin{proof}
We will demonstrate that, for all integers $k \geq 0$, we have
\begin{equation}
	\label{eq:higherholdersqg}
	\sup_{t > 0} t^{k+\alpha_0} [\theta(\cdot,t), v(\cdot,t)]_{C^{k,\alpha_0}_x(\R^2)} \les_{M} 1.
\end{equation}
Notice that $B \les M$ by $\vec{R} \: L^\infty(\R^2) \to \BMO(\R^2)$ and the maximum principle. Since $\vec{R} \: \dot C^{k,\alpha}(\R^2) \to \dot C^{k,\alpha}(\R^2)$ for all $\alpha \in (0,1)$ and $k \geq 0$, we have also $N \les_M 1$, and thus,~\eqref{eq:higherholdersqg} is verified for $k=0,1$ by the known H{\"o}lder and $C^{1,\alpha_0}_x$ estimates. We proceed by induction. Assuming that~\eqref{eq:higherholdersqg} holds for a given $k = k_0 \geq 1$, we show it for $k = k_0+1$. We differentiate the PDE:
\begin{equation}
	\p_t \nabla^{k_0} \theta + \Lambda \nabla^{k_0} \theta + v \cdot \nabla (\nabla^{k_0} \theta) = \mathcal{M}_1(\nabla v, \nabla^{k_0} \theta) + \cdots + \mathcal{M}_{k_0}(\nabla^{k_0} v, \nabla \theta),
\end{equation}
where the $\mathcal{M}_k$, $k=1,\hdots,k_0$, are certain bilinear operators that capture pointwise multiplications between the two terms. Let $\tilde{\theta} = \nabla^{k_0} \theta$ and $f = \sum_{k=1}^{k_0} \mathcal{M}_k$. Notice that the forcing term includes only derivatives of $v$, rather than $v$ itself. By interpolation and the induction hypothesis, we have
\begin{equation}
	\norm{\nabla^k v}_{L^\infty_t C^{\alpha_0}_x(\R^2 \times (1/2,1))} \les_{k,M} 1, \quad k = 1,\hdots,k_0.
\end{equation}
 Hence, $f \in L^\infty_t C^{\alpha_0}_x(\R^2 \times (1/2,1))$ with bound depending only on $M$ and $k_0$. At this point, we recall Silvestre's estimate~\eqref{eq:silvestreestimate}, which allowed a RHS $f$. The approach in Lemma~\ref{lem:higherreg}, which is easily modified to accommodate a forcing term, yields
\begin{equation}
	[\tilde{\theta}(\cdot,1)]_{C^{1,\alpha_0}(\R^2)} \les \left[ \norm{\tilde{\theta}}_{L^\infty_{t,x}(\R^2 \times (1/2,1))} + \norm{f}_{L^\infty_t C^{\alpha_0}_x(\R^2 \times (1/2,1))} \right] \les_M 1.
\end{equation}
Finally, the proof of~\eqref{eq:higherholdersqg} for $k=k_0+1$ is completed by scaling invariance.
\end{proof}

Since $v$ is only controlled in $L^\infty_t (X_p)_x(Q_1)$, the spacetime regularity is worse:

\begin{lemma}[Higher spacetime regularity of SQG solutions]
	\label{lem:higherspacetimereg}
Let $\theta$ be the solution as above, and additionally assume that $\theta$ is symmetric. Then
for all $t \in (0,1]$, integers $k, m \geq 0$, and all $p \in (2,+\infty)$,
\begin{equation}
	\label{eq:higherspacetimereginduction}
	t^{m+k} \norm{\p_t^m \nabla_x^k \theta}_{X_p} \les_{p,k,m,M} 1.
\end{equation}
\end{lemma}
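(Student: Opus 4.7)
The plan is to induct on $m$, proving the bound for all $k \geq 0$ and, crucially, all $p \in (2,+\infty)$ simultaneously. The base case $m = 0$ follows from Lemma~\ref{lem:higherspatialreg} and the trivial embedding $L^\infty(\R^2) \hookrightarrow X_p$. For the inductive step, assume the claim for all $0 \leq m \leq m_0$, all $k \geq 0$, and all $p \in (2,+\infty)$, and fix $p \in (2,+\infty)$ and $k \geq 0$. Applying $\p_t^{m_0} \nabla_x^k$ to the equation $\p_t \theta = -\Lambda \theta - v \cdot \nabla \theta$ (with $v = \vec{R}^\perp \theta$) and expanding via the Leibniz rule yields
\EQN{
\p_t^{m_0+1} \nabla_x^k \theta = -\Lambda \p_t^{m_0} \nabla_x^k \theta - \sum_{\substack{m_1 + m_2 = m_0 \\ k_1 + k_2 = k}} c_{m_1,k_1} \, \p_t^{m_1} \nabla^{k_1} v \cdot \p_t^{m_2} \nabla^{k_2+1} \theta.
}

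For the linear term, I would write $\Lambda = \vec{R} \cdot \nabla$. The induction hypothesis at $(m_0, k+1, p)$ gives $\norm{\p_t^{m_0} \nabla^{k+1} \theta}_{X_p} \les t^{-(m_0+k+1)}$, and Lemma~\ref{lem:riesz} (applicable because symmetry is preserved under $\p_t$ and $\nabla$) transfers this bound to $\vec{R} \cdot \p_t^{m_0} \nabla^{k+1}\theta$. For each Leibniz term, set $A = \p_t^{m_1} \nabla^{k_1} v = \vec{R}^\perp \p_t^{m_1} \nabla^{k_1} \theta$ and $B = \p_t^{m_2} \nabla^{k_2+1} \theta$. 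Combining Lemma~\ref{lem:riesz} with either the induction hypothesis at $(m_1,k_1,2p)$ (when $m_1 \geq 1$) or Lemma~\ref{lem:higherspatialreg} with $L^\infty \hookrightarrow X_{2p}$ (when $m_1 = 0$) gives $\norm{A}_{X_{2p}} \les_p t^{-(m_1+k_1)}$. The same argument, with the induction hypothesis at $(m_2,k_2+1,2p)$, gives $\norm{B}_{X_{2p}} \les_p t^{-(m_2+k_2+1)}$. Since $\tfrac{1}{2p} + \tfrac{1}{2p} = \tfrac{1}{p}$, H\"older's inequality on each ball $B_R$ yields
\EQN{
\norm{AB}_{X_p} \leq \norm{A}_{X_{2p}} \norm{B}_{X_{2p}} \les_p t^{-(m_0+k+1)},
}
and summing over Leibniz terms closes the induction.

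The principal obstacle is that $v = \vec{R}^\perp \theta$ is only in $\BMO$, not $L^\infty$, so the product $v \cdot \nabla \p_t^{m_0}\theta$ (and its derivatives) cannot be controlled by a pointwise $L^\infty \cdot X_p$ estimate — this is precisely why the conclusion is stated in $X_p$ rather than $L^\infty$. The resolution is to quantify the induction over all $p \in (2,+\infty)$: the higher-exponent induction hypothesis ($q = r = 2p$) lets H\"older's inequality absorb the deficient $L^\infty$-regularity of $v$. Symmetry of $\theta$ is invoked throughout, both to make sense of the Riesz transforms appearing in $A$ and $\Lambda = \vec{R}\cdot\nabla$, and to apply Lemma~\ref{lem:riesz} on the $X_p$ scale.
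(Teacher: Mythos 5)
Your argument is essentially the paper's own proof: induction in $m$ with the base case $m=0$ supplied by Lemma~\ref{lem:higherspatialreg} and $L^\infty \into X_p$, the dissipative term handled by writing $\Lambda = -\vec{R}\cdot\nabla$ and invoking the symmetric-case bound $\vec{R}\colon X_p \to X_p$ of Lemma~\ref{lem:riesz}, and the advective cross terms estimated via the induction hypothesis and H\"older's inequality in $X_{2p}$. The only difference is organizational — by quantifying the hypothesis over all $k$ and $p$ and differentiating the equation directly in space you dispense with the paper's sub-induction in $k$ — and your treatment of the Riesz transforms of derivatives of $\theta$ is at the same level of detail as the paper's sketch.
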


The proof is based on elementary bootstrapping. The only difficulty is in the properties of the space $X_p$.

\begin{proof}[Proof sketch]
Whenever $\theta$ satisfies~\eqref{eq:higherspacetimereginduction} for a given $m,k,p$, the drift $v = \vec{R}^\perp \theta$ also satisfies~\eqref{eq:higherspacetimereginduction} because $\vec{R} \: X_p \to X_p$ in the symmetric case (Lemma~\ref{lem:riesz}).

The proof is by induction in $m$ with a sub-induction in $k$. We have shown the base case $m=0$ in Lemma~\ref{lem:higherspatialreg}, since $L^\infty(\R^n) \into X_p$.
Let $m_0 \geq 0$ be an integer. Assume that~\eqref{eq:higherspacetimereginduction} holds for all integers $0 \leq m \leq m_0$,
$k \geq 0$, and $p \in (2,+\infty)$. We apply $\p_t^{m_0}$ to~\eqref{eq:sqg} and have
\begin{equation}
	\label{eq:twoderivseq}
	\p_t^{m_0+1} \theta = -  \Lambda \p_t^{m_0} \theta - \p_t^{m_0} v \cdot \nabla \theta + \text{cross terms} - v \cdot \nabla \p_t^{m_0} \theta.
\end{equation}
Since $\Lambda = ( -\vec{R} \cdot) \nabla$ and $\vec{R} \: X_p \to X_p$, we have $t^{m_0+1} \Lambda \p_t^{m_0} \theta \in L^\infty_t X_p(Q_1)$ with the desired uniform bounds. Every other term on the RHS of~\eqref{eq:twoderivseq} is estimated by the induction hypothesis and H{\"o}lder's inequality in the spaces $X_{2p}$. This demonstrates the base case in the sub-induction in $k$, which is similar to the proof of Lemma~\ref{lem:higherspatialreg}.
\end{proof}

\section{Bounded solutions}


\subsection{DSS solutions with approximate constitutive law}


Let $\lambda > 1$ and $G$ be a symmetry group. Let $\theta_0 \in L^\infty(\R^2)$ be symmetric and $\lambda$-DSS with $\norm{\theta}_{L^\infty(\R^2)} \leq M$. 
Let $b:\R^2 \times \R_+ \to \R$ be a function satisfying the following standing assumptions:
\begin{itemize} 
\item $b(\cdot,t)$ is symmetric for every $t>0$, 
\item  $b\in L^2(B_1 \times (0,1))$, and
\item if $\tilde{\lambda} > 1$ and $\theta_0$ is $\tilde{\lambda}$-DSS, then $b$ is $\tilde{\lambda}$-DSS.
\end{itemize}
Let $\mathcal Y$ denote the class of functions $b$ satisfying the above properties. Then $\mathcal{Y}$ is a Banach space with norm $\|\cdot\|_{\mathcal Y}= \|\cdot\|_{L^2(B_1\times (0,1))}$. By $\lambda$-DSS scaling, we have
\begin{equation}
	\norm{b}_{L^2_{t,x}(K)} \les_{K} \norm{b}_{\mathcal{Y}}
\end{equation}
for all compact $K \subset \R^2 \times [0,+\infty)$. The $\lambda$-DSS property is used essentially in the construction. The purpose of mentioning $\tilde{\lambda}$ is so that, if the function is fully self-similar (rather than merely discretely self-similar), self-similarity is also kept in the construction.

Let $0 \leq \td \psi \in C^\infty_0(B_2)$ be a radial cut-off function with $\td\psi = 1$ on $B_1$. Let $\rho \gg 1$ and
\begin{equation}
	\psi(x,t)=\td \psi(x/t), \quad \psi_\rho(x,t)=\psi(x/\rho,t).
\end{equation} Let $0 \leq \eta \in C^\infty_0(\R^{2+1})$ with $\supp \eta \subset \{ |x| < 1, \, |t| < 1 \}$ and $\iint_{\R^{2+1}} \eta(x,t) \,dx \,dt= 1$. Let $0<\delta\ll 1$ and
\begin{equation}
	b_\delta(x,t)=  \int_{\R_+} \int_{\R^2} \frac 1 {(t\de)^3} \eta\bigg( \frac {x-y}{t\delta},\frac {t-s}{t\delta}  \bigg) b(y,s)   \,dy\,ds, \quad  b_{\de,\rho} (x,t) = b_\delta(x,t)\psi_\rho(x,t).
\end{equation}
Finally,
\begin{equation}
	v = \vec{R}^\perp(b_{\de,\rho}).
\end{equation}

The goal of this subsection is to prove the following proposition:

\begin{proposition}[DSS solutions to approximate problem]\label{prop.existence.SQG.local}
In the above notation, there exists a bounded, smooth, symmetric, and $\lambda$-DSS solution to 
\begin{equation}
	\label{eq:sqg.local}
\partial_t \th +\La \th  + \nb \cdot(v \th)=0,\quad v = \vec{R}^\perp ( \th_{\de,\rho}).
\end{equation}
If $\tilde{\lambda} > 1$ and $\theta_0$ is $\tilde{\lambda}$-DSS, then $\theta$ may be chosen to be $\tilde{\lambda}$-DSS as well.
\end{proposition}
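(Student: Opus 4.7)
The plan is to construct $\theta$ via Schauder's fixed point theorem (equivalently, Leray--Schauder) applied to the map $\Phi \colon \mathcal{Y} \to \mathcal{Y}$ defined by $\Phi(b) = \theta$, where $\theta$ is the unique solution, furnished by Lemma~\ref{lem:Linftytheory}, of the linear drift-diffusion equation
\[\partial_t \theta + \Lambda \theta + \nabla \cdot (v\theta) = 0, \qquad v = \vec{R}^\perp(b_{\delta,\rho}),\]
with initial data $\theta_0$. A fixed point $\theta = \Phi(\theta)$ is then a solution of~\eqref{eq:sqg.local}.

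I first verify that $\Phi$ takes $\mathcal{Y}$ into itself. For any $b \in \mathcal{Y}$, both the space-time mollification at scale $t\delta$ and the cutoff $\psi_\rho(x,t) = \tilde\psi(x/(\rho t))$ are radial and invariant under the SQG rescaling $(x,t) \mapsto (\lambda x, \lambda t)$, so $b_{\delta,\rho}$ inherits the $G$-symmetry and every $\tilde\lambda$-DSS property of $b$. By Lemma~\ref{lem:riesz}, $v = \vec{R}^\perp(b_{\delta,\rho})$ is then symmetric and lies in $X_p$ for every $p \in (1,+\infty)$, with bounds on $\|v\|_{L^\infty_t \BMO_x}$, on the time-weighted H\"older seminorm $t^{\alpha_0}[v(\cdot,t)]_{C^{\alpha_0}}$, and on $\|v\|_{L^\infty_t (X_p)_x}$ depending only on $\|b_{\delta,\rho}\|_{L^\infty}$ and the cutoff parameters. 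These are precisely the hypotheses of Lemma~\ref{lem:Linftytheory}, which furnishes a bounded $\theta$ satisfying the maximum principle $\|\theta\|_{L^\infty} \leq \|\theta_0\|_{L^\infty} \leq M$. The $G$-symmetry of $\theta$, as well as $\lambda$-DSS (and $\tilde\lambda$-DSS when $\theta_0$ is $\tilde\lambda$-DSS), then follow from the uniqueness clause of Lemma~\ref{lem:Linftytheory}: for $g \in G$, the function $g \cdot \theta$ solves the same initial-value problem and hence coincides with $\theta$; the rescalings $\theta(\lambda\cdot, \lambda\cdot)$ and $\theta(\tilde\lambda\cdot, \tilde\lambda\cdot)$ are treated identically, using the DSS invariance of $v$. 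In particular $\Phi(b) \in \mathcal{Y}$.

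Now set $S := \{b \in \mathcal{Y} : \|b\|_{L^\infty(\R^2 \times \R_+)} \leq M\}$, which is convex, bounded in $\mathcal{Y}$, and closed in the $L^2(B_1 \times (0,1))$ topology (an a.e.\ subsequence extraction transfers the $L^\infty$ bound to the limit). The maximum principle gives $\Phi(S) \subseteq S$. For precompactness of $\Phi(S)$, the uniform control on $v$ lets me invoke the $C^{1,\alpha_0}_x$ estimate~\eqref{eq:higherholder} of Lemma~\ref{lem:higherreg}, producing, for each $\varepsilon \in (0,1)$, uniform bounds for $\theta = \Phi(b)$ in $L^\infty_t C^{1,\alpha_0}_x(\R^2 \times [\varepsilon, 1])$; together with a $\partial_t \theta$ bound read off the PDE, an Aubin--Lions argument yields precompactness in $L^2_{\rm loc}(\R^2 \times (0,+\infty))$, while the $L^\infty$ bound controls the initial-layer contribution via $\|\theta\|_{L^2(B_1 \times (0,\varepsilon))} \leq M \varepsilon^{1/2}$, delivering precompactness in $\mathcal{Y}$. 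For continuity of $\Phi|_S$, if $b^{(k)} \to b$ in $\mathcal{Y}$ with $b^{(k)} \in S$, then by DSS scaling the convergence extends to $L^2_{\rm loc}(\R^2 \times (0,+\infty))$, and mollification upgrades this to $b^{(k)}_{\delta,\rho} \to b_{\delta,\rho}$ in $C^\infty_{\rm loc}(\R^2 \times (0,+\infty))$. The stability clause of Lemma~\ref{lem:Linftytheory} then produces $\theta^{(k)} \to \theta$ locally in $C^{1,\beta}_x$, and a fortiori in $\mathcal{Y}$. Schauder's theorem delivers the fixed point. The main obstacle I foresee is packaging the precompactness in the $\mathcal{Y}$-norm: the smoothing estimates from the linear theory degenerate at $t=0$, so one must split the time integral into an initial layer handled by the $L^\infty$ bound and a regular part handled by Aubin--Lions, rather than relying on smoothing alone.
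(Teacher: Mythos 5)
Your proposal is correct and follows essentially the same route as the paper: define the solution map via Lemma~\ref{lem:Linftytheory} and apply Schauder's fixed point theorem, checking self-mapping (maximum principle), continuity, and precompactness. The only noteworthy difference is your choice of convex set $S = \{b \in \mathcal{Y} : \|b\|_{L^\infty} \leq M\}$, whereas the paper uses the $\mathcal{Y}$-norm ball $\mathcal{K} = \{b : \|b\|_{\mathcal{Y}}^2 \leq |B_1| M^2\}$; both work (indeed the maximum principle gives $\Phi(\mathcal{Y}) \subseteq S \subseteq \mathcal{K}$), though the paper's choice avoids your extra a.e.-closedness check. You also re-derive precompactness by hand via the $C^{1,\alpha_0}_x$ estimate and Aubin--Lions, while the paper simply defers to the compactness clause already packaged into Lemma~\ref{lem:Linftytheory}; your initial-layer split $\|\theta\|_{L^2(B_1 \times (0,\varepsilon))} \lesssim M\varepsilon^{1/2}$ is exactly the paper's estimate~\eqref{eq:proofofcontinuitymainstep}, so the substance is identical.
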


The following lemma summarizes properties of the functions and vectors  we have introduced. We suppress the dependence on $\eta$ in the constants.

\begin{lemma}\label{lemma.drift.bounded}
In the above notation, $b_\de,\, b_{\de,\rho}$ and $v$ are symmetric and $\tilde{\lambda}$-DSS whenever $\theta_0$ is $\tilde{\lambda}$-DSS. For all integers $\ell, m \geq 0$, we have
\begin{equation}
	\label{eq:l2bounddrift}
	\sup_{t > 0} t^{\ell+m-1} \norm{\p_t^\ell \nabla^m_x v(\cdot,t)}_{L^2(\R^2)} \les_{\delta,\rho,\lambda,\ell,m} \norm{b}_{\mathcal{Y}}.
	\end{equation}
\end{lemma}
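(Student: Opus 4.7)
The symmetry and $\tilde\lambda$-DSS claims follow by inspecting each step. Assuming (as we may) that the spatial part of the mollifier $\eta(\cdot,t)$ is $G$-invariant, convolution preserves $G$-symmetry, and the change of variables $(y,s) \mapsto (\tilde\lambda y, \tilde\lambda s)$ inside the integral defining $b_\delta$ combined with the $\tilde\lambda$-DSS of $b$ yields $b_\delta(\tilde\lambda x, \tilde\lambda t) = b_\delta(x,t)$. The cutoff $\psi_\rho(x,t) = \tilde\psi(x/(\rho t))$ is radial in $x$ and invariant under $(x,t) \mapsto (\tilde\lambda x, \tilde\lambda t)$, so $b_{\delta,\rho}$ inherits both properties. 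Finally, the Riesz transform commutes with dilations and, by Lemma~\ref{lem:riesz}, sends $G$-symmetric functions to $G$-symmetric functions (with the unique symmetric representative); hence $v$ is symmetric and $\tilde\lambda$-DSS.

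For the quantitative estimate, the plan is to first bound $b_\delta$ pointwise. Cauchy--Schwarz against the mollifier, combined with the observation that each derivative $\p_t$ or $\nabla_x$ hitting the kernel costs a factor $(t\delta)^{-1}$, gives
\begin{equation*}
    |\p_t^\ell \nabla_x^m b_\delta(x,t)| \lesssim_{\delta,\ell,m} t^{-3/2-\ell-m} \|b\|_{L^2(Q(x,t))},
\end{equation*}
where $Q(x,t) := B(x, t\delta) \times (t-t\delta, t+t\delta)$ is the support of the mollifier. The key step is to control $\|b\|_{L^2(Q(x,t))}$ on $\supp \psi_\rho$ using only $\|b\|_{\mathcal Y}$. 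Since $|x| \leq 2\rho t$ there, we have $Q(x,t) \subset B(0, 3\rho t) \times (t/2, 2t)$ for $\delta$ small. Choosing $k \in \Z$ with $\lambda^k t \in [1,\lambda)$ and rescaling by the $\lambda$-DSS of $b$ produces
\begin{equation*}
    \|b\|_{L^2(Q(x,t))}^2 \lesssim_{\rho,\lambda} t^3 \|b\|_{\mathcal Y}^2,
\end{equation*}
after covering the rescaled region by $O_{\rho,\lambda}(1)$ translates of $B_1 \times (0,1)$.

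Combining these two estimates yields $\|\p_t^\ell \nabla_x^m b_\delta(\cdot,t)\|_{L^\infty(\supp \psi_\rho)} \lesssim_{\delta,\rho,\lambda,\ell,m} t^{-\ell-m} \|b\|_{\mathcal Y}$. The Leibniz rule propagates this bound to $b_{\delta,\rho}$, since $\psi_\rho(x,t) = \tilde\psi(x/(\rho t))$ satisfies $|\p_t^\ell \nabla_x^m \psi_\rho| \lesssim_{\ell,m} t^{-\ell-m}$ on its support (derivatives in $x$ bring down $(\rho t)^{-1}$; for $\p_t$ one uses that $|x/(\rho t^2)| \lesssim t^{-1}$ on $\{|x| \leq 2\rho t\}$). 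Because $\supp b_{\delta,\rho}(\cdot,t) \subset B_{2\rho t}$, converting the $L^\infty$ bound to $L^2$ introduces a volume factor $|B_{2\rho t}|^{1/2} \lesssim \rho t$, producing
\begin{equation*}
    \|\p_t^\ell \nabla_x^m b_{\delta,\rho}(\cdot,t)\|_{L^2(\R^2)} \lesssim_{\delta,\rho,\lambda,\ell,m} t^{1-\ell-m} \|b\|_{\mathcal Y}.
\end{equation*}

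Finally, since $\vec{R}^\perp$ commutes with $\p_t^\ell \nabla_x^m$, we have $\p_t^\ell \nabla_x^m v = \vec{R}^\perp \p_t^\ell \nabla_x^m b_{\delta,\rho}$, and the Calder\'on--Zygmund bound $\vec{R} \colon L^2 \to L^2$ transfers the above estimate to $v$, yielding~\eqref{eq:l2bounddrift}. The only substantive point is the DSS rescaling step producing the precise factor $t^3$ in the $L^2$ bound on $b$ restricted to $Q(x,t)$; everything else is routine bookkeeping for a parabolic mollification at scale $t\delta$.
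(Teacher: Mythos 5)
Your proof is correct and essentially unpacks the paper's one-line scaling argument — both hinge on the $\lambda$-DSS structure to reduce the supremum over $t>0$ to a fixed time window, the paper by noting the quantity $t^{\ell+m-1}\norm{\p_t^\ell\nabla_x^m v(\cdot,t)}_{L^2}$ is $\lambda$-periodic in $\log t$ once $v$ is $\lambda$-DSS, you by rescaling $b$ inside the mollification — so the approach is the same. One small imprecision: the step ``covering the rescaled region by $O_{\rho,\lambda}(1)$ translates of $B_1\times(0,1)$'' should instead invoke the fact $\norm{b}_{L^2(K)}\les_K\norm{b}_{\mathcal{Y}}$ for fixed compact $K\subset\R^2\times[0,\infty)$, which is recorded in the paper and proved via the DSS unfolding of $\norm{b}_{\mathcal{Y}}$; the $\mathcal{Y}$-norm does not directly control $L^2$ on arbitrary translates of the reference cylinder.
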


Immediately,~\eqref{eq:l2bounddrift} and interpolation give
\begin{equation}
	\label{eq:linftydriftbound}
	\sup_{t > 0} t^{\ell+m} \norm{\p_t^\ell \nabla^m_x v(\cdot,t)}_{L^\infty(\R^2)} \les_{\delta,\rho,\lambda,\ell,m} \norm{b}_{\mathcal{Y}}.
\end{equation}

\begin{proof}
The proof of symmetry and $\tilde{\lambda}$-DSS is by design of the mollification and truncation procedure. Regarding the $L^2$ bounds, by scaling invariance, it suffices to consider only $t\in [1,\lam]$, where the desired estimate~\eqref{eq:l2bounddrift} is obvious. \end{proof}

Treating $\th_0$ as fixed, we can define a map $\mathcal T_{\delta,\rho} \: \mathcal Y \to \mathcal Y$ in the following way: $\mathcal T_{\delta,\rho}(b\in \mathcal Y)=\th$, where $\th$ is the solution from Lemma~\ref{lem:Linftytheory} of the linear-in-$\theta$ PDE
\begin{equation}
	\label{eq:lineareq.b}
	\p_t \theta + \Lambda \theta + \div (  v \theta) = 0,\quad v = \vec{R}^\perp(b_{\de,\rho})
\end{equation}
with initial data $\theta_0$. 
Observe that $\theta \in \mathcal{Y}$ by the maximum principle.
 
We next  show that $\mathcal{T}_{\delta,\rho}$ has a fixed point using the Schauder fixed point theorem. To do so, we must show that $\mathcal{T}_{\delta,\rho}$ is continuous and that there exists a nonempty convex closed subset $\mathcal K$ of $\mathcal Y$ so that $\mathcal{T}_{\delta,\rho}:\mathcal K\to \mathcal K$ and  $\mathcal{T}_{\delta,\rho}(\mathcal K)$ is relatively compact, i.e.,~every subsequence in $\mathcal{T}_{\delta,\rho}(\mathcal K)$ has a convergent subsequence in $\mathcal Y$.  These properties are established below:

\begin{lemma}
The map $\mathcal{T}_{\delta,\rho}$ is continuous.
\end{lemma}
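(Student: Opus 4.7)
The plan is to reduce the continuity of $\mathcal{T}_{\delta,\rho}$ to the continuity-by-compactness statement in Lemma~\ref{lem:Linftytheory}. Fix a sequence $b^{(k)}\to b$ in $\mathcal{Y}$ and set $v^{(k)}=\vec{R}^\perp(b^{(k)}_{\delta,\rho})$, $\theta^{(k)}=\mathcal{T}_{\delta,\rho}(b^{(k)})$, and $\theta=\mathcal{T}_{\delta,\rho}(b)$. The goal is to show $\theta^{(k)}\to\theta$ in $L^2(B_1\times(0,1))$.

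First I would propagate convergence from $b^{(k)}$ to $v^{(k)}$. Because $b^{(k)}-b$ is $\lambda$-DSS, the $L^2(B_1\times(0,1))$ convergence self-similarly upgrades to convergence in $L^2_{\loc}(\R^2\times\R_+)$: the rescaling $(x,t)\mapsto(\lambda^j x,\lambda^j t)$ shows that the $L^2$ norm of $b^{(k)}-b$ on $B_R\times(0,T)$ is bounded by $\lambda^{3j/2}\|b^{(k)}-b\|_{L^2(B_1\times(0,1))}$ once $\lambda^j\geq\max(R,T)$. Because the mollification scale $t\delta$ is bounded away from zero on compact subsets of $\R_+$, the map $b\mapsto b_\delta$ sends $L^2_{\loc}(\R^2\times\R_+)$ continuously into $C^\infty_{\loc}(\R^2\times\R_+)$; multiplying by the smooth cutoff $\psi_\rho$ yields $b^{(k)}_{\delta,\rho}\to b_{\delta,\rho}$ in $C^\infty_{\loc}$, with $\mathrm{supp}\,b^{(k)}_{\delta,\rho}(\cdot,t)\subset B(2\rho t)$ uniformly in $k$. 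In particular $b^{(k)}_{\delta,\rho}(\cdot,t)\to b_{\delta,\rho}(\cdot,t)$ in $L^p(\R^2)$ for every $p\in[1,\infty]$ and every fixed $t>0$, and applying the symmetric representative of $\vec{R}^\perp$ (Lemma~\ref{lem:riesz}) yields $v^{(k)}\to v$ in $L^p_{\loc}(\R^2\times\R_+)$ for every $p\in(1,\infty)$, hence distributionally.

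Next I apply Lemma~\ref{lem:Linftytheory}. Taking $(\ell,m)=(0,0)$ and $(0,1)$ in~\eqref{eq:linftydriftbound} gives $\sup_k\|v^{(k)}\|_{L^\infty_{t,x}(Q_1)}+\sup_k\sup_{t>0}t\,\|\nabla v^{(k)}(\cdot,t)\|_{L^\infty}<+\infty$; interpolation between the $L^\infty$ and time-weighted Lipschitz bounds controls $t^{\alpha_0}[v^{(k)}(\cdot,t)]_{C^{\alpha_0}}$, while the embeddings $L^\infty\subset\BMO$ and $L^\infty\subset X_p$ supply the remaining uniform hypotheses~\eqref{eq:driftB},~\eqref{eq:driftN}, and~\eqref{eq:driftV}. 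Since the initial datum $\theta_0$ is fixed and $v^{(k)}\to v$ distributionally, the continuity portion of Lemma~\ref{lem:Linftytheory} yields $\theta^{(k)}\to\theta$ in $L^\infty_t C^{1,\beta}_x(K)$ on every compact $K\subset\R^2\times(0,1]$. To close the argument, the maximum principle $\|\theta^{(k)}\|_{L^\infty(Q_1)}\leq M$ bounds $\|\theta^{(k)}-\theta\|_{L^2(B_1\times(0,\varepsilon))}$ by $O(\varepsilon^{1/2})$ uniformly in $k$, and this combines with the strong convergence on $B_1\times[\varepsilon,1]$ to deliver $\theta^{(k)}\to\theta$ in $L^2(B_1\times(0,1))$.

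The hard part will be the drift step: the Riesz transform is not strongly continuous on $L^\infty$, so one cannot conclude $v^{(k)}\to v$ in any pleasant norm directly from mere boundedness of $b^{(k)}_{\delta,\rho}$. The resolution exploits that the mollification and truncation make $b^{(k)}_{\delta,\rho}$ smooth and spatially compactly supported at each positive time, with supports uniform in $k$, reducing the matter to classical $L^p$ continuity of Riesz transforms; the symmetry then pins down the otherwise ambiguous additive constants.
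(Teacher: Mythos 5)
Your argument is correct and follows the same route as the paper: both propagate $b^{(k)}\to b$ in $\mathcal{Y}$ to convergence of the drifts $v^{(k)}\to v$ (via $\lambda$-DSS rescaling and the uniform bounds from the mollification/truncation), invoke the continuity-by-compactness statement of Lemma~\ref{lem:Linftytheory} to get local uniform convergence of $\theta^{(k)}$ away from $t=0$, and then absorb the contribution near $t=0$ using the maximum principle bound $O(\varepsilon^{1/2})$. You simply flesh out the details that the paper leaves terse (in particular how mollification, cutoff, and the symmetric Riesz transform combine to give $v^{(k)}\to v$), but the structure is identical to the paper's proof.
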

\begin{proof}
Let $( b^{(k)} )_{k \in \N} \subset \mathcal{Y}$ with $b^{(k)}\to b^{(\I)}$ in $\mathcal Y$. Let $(v^{(k)})_{k \in \N}$ and $v^{(\infty)}$ be the corresponding drifts, which satisfy $v^{(k)} \to v^{(\infty)}$ in $L^2_\loc(\R^2 \times \R_+)$ by $\lambda$-DSS scaling and the critical uniform bounds guaranteed by~Lemma~\ref{lemma.drift.bounded}. Hence, Lemma~\ref{lem:Linftytheory} gives that the corresponding solutions $\theta^{(k)}$ converge in $L^\infty_\loc(\R^2 \times \R_+)$ to the corresponding solution $\theta^{(\infty)}$. In particular, for all $S \in (0,1)$,
\begin{equation}
	\label{eq:proofofcontinuitymainstep}
	\int_0^{1} \int_{B_1} |\theta^{(k)} - \theta^{(\infty)}|^2 \, dx \,dt \leq 4 |B_1| S M^2 + o_{k \to +\infty}(1)
\end{equation}
where $o_{k \to +\infty}(1)$ may depend on $S$. This gives the desired convergence.
\end{proof}

Let $\mathcal K = \{  b\in \mathcal Y: \|b\|_{\mathcal Y}^2 \leq |B_1| M^2 \}$. This set is clearly nonempty, convex and closed.

\begin{lemma}
The map $\mathcal{T}_{\delta,\rho}$ maps $\mathcal{K}$ into $\mathcal{K}$.
\end{lemma}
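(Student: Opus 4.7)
The plan is a direct application of the maximum principle and uniqueness from Lemma~\ref{lem:Linftytheory}, combined with the symmetry/scaling properties of the drift established in Lemma~\ref{lemma.drift.bounded}. Fix $b \in \mathcal{K}$ and let $\theta = \mathcal{T}_{\delta,\rho}(b)$. I need to check the three defining properties of $\mathcal{Y}$ (symmetry, $L^2(B_1\times(0,1))$ membership, $\tilde{\lambda}$-DSS compatibility) and then the size bound $\norm{\theta}_{\mathcal{Y}}^2 \leq |B_1|M^2$.

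First I would verify symmetry via a uniqueness argument. Since $b$ is $G$-symmetric, Lemma~\ref{lemma.drift.bounded} gives that $v = \vec{R}^\perp(b_{\delta,\rho})$ is $G$-symmetric. For any $g \in G$, the function $g\cdot\theta$ solves the linear PDE~\eqref{eq:lineareq.b} with drift $g\cdot v = v$ (as $v$ is symmetric) and initial data $g\cdot\theta_0 = \theta_0$ (as $\theta_0$ is symmetric). Since $v$ satisfies the hypotheses~\eqref{eq:driftB},~\eqref{eq:driftN},~\eqref{eq:driftV} required by Lemma~\ref{lem:Linftytheory} (its $L^\infty$ and higher regularity bounds follow from~\eqref{eq:linftydriftbound}), the uniqueness clause of that lemma yields $g\cdot\theta = \theta$.

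Next I would handle the $\tilde{\lambda}$-DSS property in exactly the same way. If $\theta_0$ is $\tilde{\lambda}$-DSS, then by hypothesis on $\mathcal{Y}$ so is $b$, hence Lemma~\ref{lemma.drift.bounded} gives that $v$ is $\tilde{\lambda}$-DSS. The rescaled $\tilde{\theta}(x,t) = \theta(\tilde{\lambda}x, \tilde{\lambda}t)$ solves the same linear PDE (since the drift is DSS-invariant) with the same initial data $\theta_0$, so uniqueness again forces $\tilde{\theta} = \theta$.

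Finally, the size estimate follows immediately from the maximum principle in Lemma~\ref{lem:Linftytheory}:
\begin{equation*}
\norm{\theta}_{L^\infty_{t,x}(\R^2 \times (0,1))} \leq \norm{\theta_0}_{L^\infty(\R^2)} \leq M,
\end{equation*}
so that
\begin{equation*}
\norm{\theta}_{\mathcal{Y}}^2 = \int_0^1 \int_{B_1} |\theta(x,t)|^2 \, dx \, dt \leq M^2 |B_1|,
\end{equation*}
which is exactly the definition of $\mathcal{K}$. There is no real obstacle here; the only point requiring a moment of care is checking that the drift $v$ indeed satisfies the quantitative hypotheses~\eqref{eq:driftB}--\eqref{eq:driftV} needed to invoke uniqueness in Lemma~\ref{lem:Linftytheory}, which is immediate from the $L^\infty$ bounds in~\eqref{eq:linftydriftbound} (with time weights that are harmless on the bounded time interval $(0,1)$, or by working on $[\tau, 1]$ and letting $\tau \to 0^+$).
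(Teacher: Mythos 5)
Your proof is correct and takes essentially the same route as the paper: the paper's proof of this lemma consists exactly of your final step, namely the maximum principle from Lemma~\ref{lem:Linftytheory} giving $\int_0^1\int_{B_1}|\theta|^2\,dx\,dt \le |B_1|M^2$. The symmetry and $\tilde{\lambda}$-DSS verifications you include via the uniqueness clause are not part of the paper's proof of this particular lemma (the paper folds membership of $\theta$ in $\mathcal{Y}$ into the definition of $\mathcal{T}_{\delta,\rho}$, relying on Lemma~\ref{lemma.drift.bounded} and the uniqueness class of Lemma~\ref{lem:Linftytheory}), so your extra checks simply make explicit what the paper leaves implicit.
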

\begin{proof}
Let $b \in \mathcal{Y}$.
By the maximum principle, we have
\begin{equation}
	\int_0^{1} \int_{B_1} |\theta|^2 \, dx \, dt \leq |B_1| M^2.
\end{equation}
\end{proof}

%
%
%

In fact, we have shown that $\mathcal{T}_{\delta,\rho}(\mathcal Y) \subset \mathcal K$. This is  simpler than  what happens for  the 3D Navier-Stokes equations~\cite{wolfchael2loc}.  

\begin{lemma}
Every sequence in $\mathcal{T}_{\delta,\rho}(\mathcal K)$ has a convergent subsequence in $\mathcal Y$.
\end{lemma}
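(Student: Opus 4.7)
The plan is to extract a weakly convergent subsequence $b^{(k)} \wto b^{(\infty)}$ in $L^2(B_1 \times (0,1))$, promote this to strong local convergence of the drifts $v^{(k)} \to v^{(\infty)}$ via the smoothing at scale $t\delta$, apply the continuity-by-compactness statement of Lemma~\ref{lem:Linftytheory} on compact time intervals away from zero to obtain $\theta^{(k)} \to \theta^{(\infty)}$ locally uniformly, and finally upgrade to convergence in $\mathcal{Y} = L^2(B_1 \times (0,1))$ using the uniform maximum principle bound to absorb the initial layer.

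Let $(\theta^{(k)})_{k \in \N} \subset \mathcal{T}_{\delta,\rho}(\mathcal{K})$ with $\theta^{(k)} = \mathcal{T}_{\delta,\rho}(b^{(k)})$ and $b^{(k)} \in \mathcal{K}$. Since $\mathcal{K}$ is bounded, convex, and closed in the Hilbert space $\mathcal{Y}$, Banach--Alaoglu together with Mazur's theorem furnishes a (not relabeled) subsequence with $b^{(k)} \wto b^{(\infty)} \in \mathcal{K}$ weakly in $L^2(B_1 \times (0,1))$; symmetry and $\lambda$-DSS pass to the weak limit. Iterated $\lambda$-DSS scaling propagates this weak convergence to every bounded parabolic cylinder in $\R^2 \times \R_+$. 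Since the mollification kernel defining $b_\delta$ is smooth and supported at scale $t\delta > 0$ bounded away from $t=0$ on compact subsets of $\R^2 \times \R_+$, this implies $b^{(k)}_\delta \to b^{(\infty)}_\delta$ in $C^\infty_{\loc}(\R^2 \times \R_+)$. Truncating by $\psi_\rho$ and applying Lemma~\ref{lem:riesz} then yields $v^{(k)} \to v^{(\infty)} := \vec{R}^\perp(b^{(\infty)}_{\delta,\rho})$ in $C^m_{\loc}(\R^2 \times \R_+)$ for every $m \geq 0$.

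On any compact time interval $[t_1, t_2] \subset (0,+\infty)$, the drifts $v^{(k)}$ satisfy uniform $\BMO$, H\"older, and $X_p$ bounds by Lemma~\ref{lemma.drift.bounded} and~\eqref{eq:linftydriftbound}, so the continuity-by-compactness statement of Lemma~\ref{lem:Linftytheory} (translated and rescaled from $Q_1$) gives $\theta^{(k)} \to \theta^{(\infty)}$ locally uniformly on $\R^2 \times \R_+$, where $\theta^{(\infty)} := \mathcal{T}_{\delta,\rho}(b^{(\infty)})$. For $\e \in (0,1)$, I then split
\begin{equation*}
\norm{\theta^{(k)} - \theta^{(\infty)}}_{L^2(B_1 \times (0,1))}^2 \leq 4 M^2 |B_1| \, \e + \norm{\theta^{(k)} - \theta^{(\infty)}}_{L^2(B_1 \times (\e, 1))}^2,
\end{equation*}
using $\norm{\theta^{(k)}}_{L^\infty}, \norm{\theta^{(\infty)}}_{L^\infty} \leq M$ on the initial slab and local uniform convergence on $\overline{B_1} \times [\e, 1]$. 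Sending $k \to +\infty$ and then $\e \to 0^+$ gives $\theta^{(k)} \to \theta^{(\infty)}$ in $\mathcal{Y}$, so $\mathcal{T}_{\delta,\rho}(\mathcal{K})$ is relatively compact. The main obstacle here is the singular behavior of the mollification scale $t\delta$ as $t \to 0^+$, which obstructs any direct propagation of strong compactness all the way down to the initial time; the role of the maximum principle is precisely to render the contribution from the initial layer negligible.
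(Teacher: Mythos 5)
Your argument is correct and essentially the paper's: the uniform drift bounds from Lemma~\ref{lemma.drift.bounded} (and~\eqref{eq:linftydriftbound}) feed the continuity-by-compactness part of Lemma~\ref{lem:Linftytheory}, and the $\e$-splitting using the maximum principle (the paper's estimate~\eqref{eq:proofofcontinuitymainstep}) upgrades local uniform convergence away from $t=0$ to convergence in $\mathcal{Y}$. The only deviation is that you first extract a weak $L^2$ limit $b^{(\infty)}$ and identify the limit solution as $\mathcal{T}_{\delta,\rho}(b^{(\infty)})$; this is harmless but not needed for relative compactness, and the paper instead simply passes to a subsequence with $v^{(k)} \wstar v$ in the sense of distributions.
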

This can be proven using a compactness argument common in the analysis of fluids. However, a more direct argument is available here due to regularity.

\begin{proof}
Let $(b^{(k)})_{k \in \N}$ be a sequence in $\mathcal{K}$ and $(\theta^{(k)})_{k \in \N}$ be the corresponding sequence of solutions. Hence, $v^{(k)}$ satisfies the critical bounds in Lemma~\ref{lemma.drift.bounded} uniformly in $k$, and upon passing to a subsequence, we have that $v^{(k)} \wstar v$ in the sense of distributions. Then Lemma~\ref{lem:Linftytheory} ensures the desired convergence $\theta^{(k)} \to \theta$ in $L^\infty_\loc(\R^2 \times \R_+)$. Then we use~\eqref{eq:proofofcontinuitymainstep} as in the proof of continuity to conclude the convergence in $\mathcal{Y}$.

\end{proof}

\subsection{Proof of Theorem~\ref{thm:boundedsolutions}}
 

 Let $\theta_0 \in L^\infty(\R^2)$ be symmetric with $\norm{\theta_0}_{L^\infty(\R^2)} \leq M$.

\emph{1. General solutions}.
Consider a sequence $(\theta^{(k)}_0)_{k \in \N}$ of symmetric functions in $C^\infty_0(\R^n)$ with $\norm{\theta^{(k)}_0}_{L^\infty(\R^2)} \leq M$ and $\theta^{(k)}_0 \wstar \theta_0$ in $L^\infty(\R^2)$ as $k \to +\infty$. Such a sequence is obtained by mollifying $\theta_0$ and applying cut-offs. Let $\theta^{(k)}$ denote the unique global solution of~\eqref{eq:sqg}, belonging to $C([0,+\infty);H^m(\R^2))$ for all $m \in \N$ with initial data $\theta^{(k)}_0$. The existence of these solutions is guaranteed by Caffarelli and Vasseur~\cite{CV} or Constantin and Vicol~\cite{ConstVicolGAFA}.\footnote{The short-time well-posedness can be developed by energy estimates, and the two mentioned sources guarantee that the solution may be continued globally.} By virtue of its uniqueness, $\theta^{(k)}$ is symmetric. Moreover, the solutions satisfy the necessary estimates (which we also mention below) to apply the compactness in Lemma~\ref{lem:Linftytheory}. Upon passing to a subsequence, $\theta^{(k)}$ converges to the desired global solution of~\eqref{eq:sqg}.

\emph{2. Self-similar solutions}.
Assume further that $\theta_0$ is $\lambda$-DSS. We require a solution method which respects the self-similarity, and therefore, we no longer mollify the initial data. Let $1 \leq \rho^{(k)} \to +\infty$ and $\delta^{(k)} \to 0$ with $0 < \delta^{(k)} \leq 1$. Let $\theta^{(k)}$ be the corresponding solutions of the approximate SQG~\eqref{eq:sqg.local} from Proposition~\ref{prop.existence.SQG.local}. Since $\th^{(k)}$ are generated by the map $\mathcal{T}_{\delta^{(k)},\rho^{(k)}}$, they are the solutions of Lemma~\ref{lem:Linftytheory}. In particular, we have the maximum principle $\sup_{k} \|\th^{(k)}\|_{L^\I ( \R^2\times \R_+)}\leq \|\th_0\|_{L^\I(\R^2)}$. By properties of the Riesz transforms, we have
\[ \| v^{(k)} \|_{L^\I (\R_+;X_p)} = \| v^{(k)} \|_{L^\I (\R_+;X_{p,\osc})}  \les_p  \|v^{(k)}\|_{L^\I (\R_+;\BMO(\R^2))} \les \|\th_0\|_{L^\I(\R^2)},\]
where we used symmetry and the fact that the localized and smoothed drift velocities in the definitions of $v^{(k)}$ can be bounded by $\th^{(k)}$ in an obvious way. Hence, we have the uniform-in-$k$ H\"older estimate~\eqref{eq:initialholder} and $C^{1,\alpha_0}_x$ estimate~\eqref{eq:higherholder} depending only on $\norm{\theta_0}_{L^\infty(\R^2)}$. We are now in a position to apply the compactness part of Lemma~\ref{lem:Linftytheory}. Let $v = \vec{R}^\perp \th$. The convergence properties of $\th^{(k)}$ are strong enough to ensure that $v^{(k)}\wstar v$ in the sense of distributions and that $\th$ solves \eqref{eq:sqg}. The compactness part of Lemma~\ref{lem:Linftytheory} also guarantees that $\theta$ attains its initial data $\theta_0$ in the sense of~\eqref{eq:attaininitialdatabounded}. This completes the existence proof.

\emph{3. Smoothness}. The advertised smoothness follows from Lemmas~\ref{lem:higherspatialreg} and~\ref{lem:higherspacetimereg}.

\emph{4. Weak-$\ast$ stability}. This follows from the compactness part of Lemma~\ref{lem:Linftytheory} and Remark~\ref{rmk:riesztransformsandconvergence}, which handles the convergence of the Riesz transforms.

\section{Regular solutions}

In this section, we demonstrate Theorem~\ref{thm:moreregularsolutions}. Recall the definitions of $\dot Y^{\alpha}$ and $\mathbb{Y}^\alpha$ in~\eqref{eq:ydotdef} and~\eqref{eq:ybbdef}.

Let $\theta^{\lin} = e^{-t\Lambda} \theta_0$. Our goal is to estimate $\psi = \theta - \theta^{\lin}$. The equation satisfied by $\psi$ is
\begin{equation}
\begin{aligned}
	\p_t \psi + \Lambda \psi + \vec{R}^\perp \theta \cdot \nabla \psi + \vec{R}^\perp \psi \cdot \nabla \theta^\lin &= - \vec{R}^\perp \theta^\lin \cdot \nabla \theta^\lin \\
	\psi(\cdot,0) &= 0.
	\end{aligned}
\end{equation}


\begin{proof}[Proof of \emph{a priori} estimates]
We use a Calder{\'o}n-type splitting into subcritical and supercritical parts. Let $\varphi \in C^\infty(B_1)$ be a radial test function with $\varphi \equiv 1$ on $B_{1/2}$. Let
\begin{equation}
	\theta_0 = \tilde{\theta}_0 + \bar{\theta}_0,
\end{equation}
where
\begin{equation}
	\tilde{\theta}_0 = \varphi \theta_0, \quad
	\bar{\theta}_0 = (1-\varphi) \theta_0.
\end{equation}
Then, for all $p \in [1,+\infty]$, we have
\begin{equation}
	\label{eq:thetaendupwhere}
	\norm{\bar{\theta}_0}_{\mathbb{Y}^\alpha} + \norm{\tilde{\theta_0}}_{\mathbb{Y}^\alpha}  + \norm{\bar{\theta}_0}_{C^\alpha} + \norm{\tilde{\theta_0}}_{L^p} \les_p M,
\end{equation}
where we suppress the dependence on $\varphi$. Here, $\tilde{\theta}_0$ is the supercritical part, and $\bar{\theta}_0$ is the subcritical part. The main non-trivial assertion in~\eqref{eq:thetaendupwhere} is that $\vec{R} \bar{\theta}_0, \vec{R} \tilde{\theta}_0 \in L^\infty$, which we establish below. Specifically, we focus on estimating the following commutator in $L^\infty(\R^2)$:
\begin{equation}
	[\vec{R},\varphi] \theta_0 = \pv \int_{\R^n} [K(x,y) - K(0,y) \mathbf{1}_{|y| \geq 2}] (\varphi(x) - \varphi(y)) \theta_0(y) \, dy
\end{equation}
where $K(x,y) = c_n (x-y)/|x-y|^{n+1}$. When $x \in B_1$ and $y \in B_2$, we write $\varphi(x) - \varphi(y) = \nabla \varphi(x) \cdot (x-y) + O(|x-y|^2)$ and use that the new kernel is summable in $|y| \leq 2$. 
When $x \in B_1$ and $|y| \geq 2$, we have
\begin{equation}
	\left\lvert \varphi(x) \int_{\R^n \setminus B_2} [K(x,y) - K(0,y)] \theta_0(y) \, dy \right\rvert \les \norm{\theta_0}_{L^\infty(\R^2 \setminus B_2)},
\end{equation}
since the above kernel is summable over $|y| \geq 2$. Finally, when $|x| \geq 1$, we have
\begin{equation}
	\left\lvert \int_{B_1} K(x,y) \varphi(y) \theta_0(y) \, dy \right\rvert \les |x|^{-2} \norm{\theta_0}_{L^1(B_1)}.
\end{equation}
Combining the above three estimates gives the proof.

Next, we decompose the solution as
\begin{equation}
	\theta = \underbrace{e^{-t\Lambda} \bar{\theta}_0}_{\bar{\theta}} + \tilde{\theta}.
\end{equation}
Hence,
\begin{equation}
	\psi = \underbrace{\bar{\theta} - \theta^\lin}_{ - e^{-t\Lambda} \tilde{\theta}_0} + \tilde{\theta}.
\end{equation}
For the linear evolution $\bar{\theta}$ of the subcritical data $\bar{\theta}_0$, we have
\begin{equation}
	\sup_t \, \norm{\nabla \bar{\theta}(\cdot,t)}_{L^{2,\infty}} + \sup_t t^{1-\alpha} \norm{\nabla \bar{\theta}(\cdot,t)}_{L^\infty} \les_\alpha M.
\end{equation}
Interpolation yields
\begin{equation}
	\label{eq:barthetaest}
	\sup_t t^{ (1-\alpha) \left( 1-\frac{2}{p} \right) } \norm{\nabla \bar{\theta}(\cdot,t)}_{L^p} \les_\alpha M
\end{equation}
for all $p \in (2,+\infty]$, and, in particular,
\begin{equation}
	\label{eq:wegottahide}
	\norm{\nabla \bar{\theta}}_{L^1_t L^p_x(Q_T)} \les_{\alpha,p} T^{\frac{2}{p}+\alpha-\frac{2\alpha}{p}} M.
\end{equation}
The above exponent is positive.
For the linear evolution of the supercritical data $\tilde{\theta}_0$, we have
\begin{equation}
\label{eq:supercriticalevol}
	\frac{1}{p} \sup_{t \in (0,T)} \int_{\R^2} |e^{-t\Lambda} \tilde{\theta}_0|^p(x,t) \, dx + c_p \int_{0}^{T} \int_{\R^2} |\Lambda^{\frac{1}{2}} |e^{-t\Lambda} \tilde{\theta}_0|^{\frac{p}{2}}|^2 \, dx \, dt \les_p M.
\end{equation}



The equation satisfied by $\tilde{\theta}$ is
\begin{equation}
	\begin{aligned}
		\p_t \tilde{\theta} + \Lambda \tilde{\theta} + \vec{R}^\perp \theta \cdot \nabla \tilde{\theta} + \vec{R}^\perp \tilde{\theta} \cdot \nabla \bar{\theta} &= - \vec{R}^\perp \bar{\theta} \cdot \nabla \bar{\theta} \\
		\tilde{\theta}(\cdot,0) &= \tilde{\theta}_0.
		\end{aligned}
\end{equation}
We wish to perform energy estimates. Interestingly, it is unnecessary to ask beforehand that $\psi, \tilde{\theta} \in C([0,1];L^p(\R^2))$. This is due to the criticality of the linear-in-$\theta$ PDE~\eqref{eq:basiclinearpde}: $\theta$ is the unique bounded, smooth, symmetric solution, see Lemma~\ref{lem:Linftytheory}, and one may justify the following calculations through an approximation procedure for this unique solution. Multiplying by $|\tilde{\theta}|^{p-2} \tilde{\theta}$ and integrating by parts, we have
\begin{equation}
\begin{aligned}
	 &\frac{1}{p} \int_{\R^2} |\tilde{\theta}|^p(x,t_2) \, dx + c_p \int_{t_1}^{t_2} \int_{\R^2} |\Lambda^{\frac{1}{2}} |\tilde{\theta}|^{\frac{p}{2}}|^2 \, dx \, dt \\
	 &\quad \leq \int_{t_1}^{t_2} \int_{\R^2} \left\lvert \vec{R}^\perp \tilde{\theta} \cdot \nabla \bar{\theta} |\tilde{\theta}|^{p-2} \tilde{\theta}\right\rvert + \left\lvert \vec{R}^\perp \bar{\theta} \cdot \nabla \bar{\theta} |\tilde{\theta}^{p-2}| \tilde{\theta} \right\rvert \, dx \, dt + \frac{1}{p} \int_{\R^2} |\tilde{\theta}|^p(x,t_1) \, dx\\
	&\quad \les_p \norm{\tilde{\theta}}_{L^\infty_t L^p_x(Q_T)}^p \norm{\nabla \bar{\theta}}_{L^1_t L^\infty_x(Q_T)} + \norm{\tilde{\theta}}_{L^\infty_t L^p_x}^{p-1} \norm{\vec{R}^\perp \bar{\theta}}_{L^\infty_{t,x}(Q_T)} \norm{\nabla \bar{\theta}}_{L^1_t L^p_x(Q_T)} + \int_{\R^2} |\tilde{\theta}|^p(x,t_1) \, dx
	\end{aligned}
\end{equation}
for a.e. $t_1, t_2 \in (0,+\infty)$.
Taking $t_1 \to 0^+$ and using Young's inequality on the second term on the RHS, we have
\begin{equation}
\begin{aligned}
	&\sup_{t \in (0,T)} \frac{1}{p} \int_{\R^2} |\tilde{\theta}|^p(x,t) \, dx + c_p \int_{0}^{T} \int_{\R^2} |\Lambda^{\frac{1}{2}} |\tilde{\theta}|^{\frac{p}{2}}|^2 \, dx \, dt \\
	&\quad \les_p \norm{\tilde{\theta}}_{L^\infty_t L^p_x(Q_T)}^p \norm{\nabla \bar{\theta}}_{L^1_t L^\infty_x(Q_T)} + \norm{\vec{R}^\perp \bar{\theta}}_{L^\infty_{t,x}(Q_T)}^p \norm{\nabla \bar{\theta}}_{L^1_t L^p_x(Q_T)}^p + \norm{\tilde{\theta}_0}_{L^p(\R^2)}^p.
	\end{aligned}
\end{equation}
Since $\bar{\theta}$ is subcritical, we use~\eqref{eq:wegottahide} (with $p=+\infty$) to absorb the first term on the RHS into the LHS when $T \ll_{\alpha,p,M} 1$. The second term on the RHS is estimated by~\eqref{eq:barthetaest}--\eqref{eq:wegottahide}. This gives
\begin{equation}
	\frac{1}{p} \sup_{t \in (0,\bar{T})} \int |\tilde{\theta}|^p(\cdot,t) + c_p \int_{0}^{\bar{T}} \int |\Lambda^{\frac{1}{2}} |\tilde{\theta}|^{\frac{p}{2}}|^2 \les_{M,p} 1.
\end{equation}
with $\bar{T} = \bar{T}(\alpha,p,M) \leq 1$.
Combining these with the estimates~\eqref{eq:supercriticalevol} for $e^{-t\Lambda} \tilde{\theta}_0$ and the decomposition $\psi = \tilde{\theta} - e^{-t\Lambda} \tilde{\theta}_0$, we have
\begin{equation}
	\frac{1}{p} \sup_{t \in (0,\bar{T})} \int |\psi|^p(x,t) \, dx +  c_p \int_{0}^{\bar{T}} \int |\Lambda^{\frac{1}{2}} |\psi|^{\frac{p}{2}}|^2 \, dx \,dt \les_{M,p} 1.
\end{equation}
The estimates for arbitrary times follow from scaling invariance of the norms.
\end{proof}

It is likely that the solution can be shown to belong to $L^{2,\infty}(\R^2)$ at each time as well.

\begin{proof}[Proof of higher regularity]
The main obstacle is to show that $v = \vec{R}^\perp \theta$ belongs to $L^\infty_{t,x}(\R^2 \times (1/2,1))$ with estimates depending only on $M$. Scaling invariance automatically extends the estimate to $L^\infty_{t,x}(\R^2 \times \R_+)$. The proof is completed by suitably adjusting the proof of Lemma~\ref{lem:higherspacetimereg}.

We need only estimates on $\vec{R}^\perp \psi$, since $e^{-t\Lambda} \vec{R}^\perp  \theta_0$ is well understood and $\vec{R}^\perp$ commutes with the semigroup. Because $\vec{R}^\perp \: L^4(\R^2) \to L^4(\R^2)$, we know that $\vec{R}^\perp \psi \in L^\infty_t L^4_x(Q_1)$ with estimates depending only on $M$. We exploit the identity
\begin{equation}
	\label{eq:psiidentity}
	\nabla \vec{R}^\perp \theta(\cdot,t) = \nabla e^{-t\Lambda} \vec{R}^\perp \theta_0 + \nabla \vec{R}^\perp \psi
\end{equation}
and the estimates
\begin{equation}
	\label{eq:thisthingwewilluse}
	\sup_{t \in (0,1)} t \norm{\nabla \vec{R}^\perp \theta(\cdot,t)}_{L^\infty(\R^2)} + t \norm{\nabla e^{-t\Lambda} \vec{R}^\perp \theta_0}_{L^\infty(\R^2)} \les_M 1.
\end{equation}
Let us explain how to estimate the first term in~\eqref{eq:thisthingwewilluse}. Theorem~\ref{thm:boundedsolutions} guarantees that $\sup_{t \in (0,1)} \norm{\theta(\cdot,t)}_{L^\infty(\R^2)} + t^2 \norm{\nabla^2 \theta(\cdot,t)}_{L^\infty(\R^2)} \les_M 1$, which implies that $\sup_{t \in (0,1)} \norm{\theta(\cdot,t)}_{\dot B^{0}_{\infty,\infty}(\R^2)} + t^2 \norm{\theta(\cdot,t)}_{\dot B^2_{\infty,\infty}(\R^2)} \les_M 1$. After interpolation, we have $\sup_{t \in (0,1)} t \norm{\theta(\cdot,t)}_{\dot B^1_{\infty,1}} \les_M 1$, and the Riesz transform is bounded on $\dot B^1_{\infty,1}(\R^2)$ (see Proposition~2.30 in~\cite{bahourichemindanchin}). This justifies~\eqref{eq:thisthingwewilluse}. The identity~\eqref{eq:psiidentity}, the estimate~\eqref{eq:thisthingwewilluse}, and the triangle inequality imply that $\sup_{t \in (0,1)} t \norm{\nabla \vec{R}^\perp \psi(\cdot,t)}_{L^\infty(\R^2)} \les_M 1$. Finally, interpolation between the estimates for $\vec{R}^\perp \psi$ in $L^\infty_t L^4_x(\R^2 \times (1/2,1))$ and $L^\infty_t \dot W^{1,\infty}_x(\R^2 \times (1/2,1))$ gives that $\vec{R}^\perp \psi \in L^\infty_{t,x}(\R^2 \times (1/2,1))$ with estimates depending only on $M$, as desired.
\end{proof}

\begin{proof}[Proof of uniqueness]
Let $\theta_1$, $\theta_2$ be two solutions with the same initial data. Let $\psi_k = \theta_k - e^{-t\Lambda} \theta_0$, $k = 1,2$. Let $f = \theta_1 - \theta_2 = \psi_1 - \psi_2$. Then $f$ satisfies the equation
\begin{equation}
	\begin{aligned}
		\p_t f + \Lambda f + \vec{R}^\perp \theta_1 \cdot \nabla f + \vec{R}^\perp f \cdot \nabla \theta_2 &= 0 \\
		f(\cdot,0) &= 0.
		\end{aligned}
		\end{equation}
		Energy estimates with $p=4$ yield
		\begin{equation}
			\label{eq:energyestp4}
\begin{aligned}
	 &\frac{1}{4} \int |f|^4(x,t_1) \, dx + c_4 \int_{t_0}^{t_1} \int |\Lambda^{\frac{1}{2}} |f|^2|^2 \, dx \, dt \\
	 &\quad \leq \int_{t_0}^{t_1} \int \left\lvert \vec{R}^\perp f \cdot \nabla \theta_2 |f|^2 f \right\rvert \, dx \, dt + \frac{1}{4} \int |f|^4(x,t_0) \, dx
	\end{aligned}
\end{equation}
for a.e. $0 < t_0 < t_1 < +\infty$. Since $\norm{f(\cdot,t)}_{L^4(\R^2)} \les t^{1/2}$, we may include $t_0 = 0$ and write $T = t_1$. We now estimate
\begin{equation}
	\label{eq:estimateondrifttermp4}
\begin{aligned}
	\int_{0}^{T} \int \left\lvert \vec{R}^\perp f \cdot \nabla \theta_2 |f|^2 f \right\rvert \, dx \, dt &\leq \int_0^T t \, dt\times  \norm{t^{-1/2} f}_{L^\infty_t L^4_x(Q_T)}^4 \norm{t \nabla \theta_2}_{L^\infty(\R^2)} \\
	&\leq C(M) T^2 \norm{t^{-1/2} f}_{L^\infty_t L^4_x(Q_T)}^4,
	\end{aligned}
\end{equation}
where $C(M) \to 0$ as $M \to 0^+$. Inserting~\eqref{eq:estimateondrifttermp4} into~\eqref{eq:energyestp4} and performing basic manipulations, we have
\begin{equation}
	\norm{t^{-1/2} f}_{L^\infty_t L^4_x(Q_T)} \leq C(M) \norm{t^{-1/2} f}_{L^\infty_t L^4_x(Q_T)}.
\end{equation}
When $M \ll 1$, we have that $f \equiv 0$. This completes the proof of uniqueness.
\end{proof}

\section{Local energy estimates}
\label{sec:localenergy}

This section is devoted to local energy estimates for the non-local drift-diffusion equation~\eqref{eq:basiclinearpde} from Section~\ref{sec:linftytheory}. The PDE is
\begin{equation}\label{eq:linear}
	\p_t \theta + v \cdot \nabla \theta + \Lambda \theta = 0,
\end{equation}
where $\div v = 0$.  We will usually assume convergence to the initial data $\th_0$ in the following sense: For every compact set $K\subset \R^n$, 
\EQ{\label{eq:attaininitialdata}
\lim_{t\to 0^+} \|  \th(\cdot,t)-\th_0\|_{L^2(K)}=0.
}
We cast our energy estimates in terms of function spaces $A_T$ and $E_T$, the definitions of which we presently recall. Let $T \in (0,+\infty]$ and $f \in L^1_\loc(Q_T)$ satisfying $f(\cdot,t) \in X_1$ for a.e. $t \in (0,T)$. Define
\begin{equation}
	\norm{f}_{A_T^{R_0}} := \esssup_{t \in (0,T)} \norm{f(\cdot,t)}_{X_2^{R_0}}.
\end{equation}
Then 
 $\Lambda f(\cdot,t)$ is well defined as a tempered distribution for a.e. $t \in (0,T)$. If additionally $\Lambda f \in L^1_\loc(Q_T)$, then
\begin{equation}
	\norm{f}_{E_T^{R_0}}^2 := \sup_{R \geq R_0} \frac{1}{R^n}  \int_0^T \int_{B_R} |\Lambda^{1/2} f|^2 \, dx \, dt.
\end{equation}
The space $A_T^{R_0}$ (resp. $E_T^{R_0}$) is defined by the property that $\norm{f}_{A_T^{R_0}} < +\infty$ (resp. $\norm{f}_{E_T^{R_0}} < +\infty$). When $R_0 = 1$, we write simply $A_T$ (resp. $E_T$). There is an appropriate notion of weak-$\ast$ convergence in these spaces in which the norms are lower semi-continuous.

To begin, we require a few facts about fractional operators and Sobolev spaces. First, $\kappa \in (2,4]$ denotes the exponent corresponding to the Sobolev embedding $\dot H^{1/2}(\R^n) \into L^\kappa(\R^n)$. That is,
\begin{equation}
	\label{eq:sobolevembedding}
	\norm{g}_{L^\kappa(\R^n)} \les \norm{\Lambda^{1/2} g}_{L^2(\R^n)},
\end{equation}
where
\begin{equation}
	\frac{n}{2} = \frac{n}{\kappa} + \frac{1}{2}.
\end{equation}
In particular, $n$ is the H{\"o}lder conjugate of $\kappa/2$.\footnote{The embedding~\eqref{eq:sobolevembedding} is valid, for example, when $g \in L^1(\R^n)$ is compactly supported and the RHS of~\eqref{eq:sobolevembedding} is finite. This is the context in which we use it.} Second, we have


\begin{lemma}[Commutator estimate]
\label{lem:muhcommutatorest}
 Let $f \in X_1$ with $\Lambda f \in L^1_\loc(\R^n)$.
Let $\phi \in C^\infty_0(B_1)$ and $\phi_R(x) = \phi(x/R)$ with $R \geq 1$. Then
\begin{equation}
	\label{eq:commutatorest}
	\norm{[\Lambda^{1/2},\phi_R] f}_{L^2(\R^n)} \les_\phi R^{(n-1)/2} \norm{f}_{X_2}.
\end{equation}
\end{lemma}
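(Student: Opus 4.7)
The plan is to compute the kernel of the commutator explicitly and then split the resulting singular integral into three pieces that are each bounded by $R^{(n-1)/2}\norm{f}_{X_2}$ in $L^2(\R^n)$. Using the singular integral representation of $\Lambda^{1/2}$, a direct computation gives
\EQN{
([\Lambda^{1/2},\phi_R] f)(x) = c_n \, \pv \int_{\R^n} \frac{\phi_R(x)-\phi_R(y)}{|x-y|^{n+1/2}}\, f(y)\, dy,
}
so the problem reduces to estimating the integral operator with the kernel $K(x,y) = c_n(\phi_R(x)-\phi_R(y))/|x-y|^{n+1/2}$. Since $\phi_R$ is supported in $B_R$, the kernel is supported where $x\in B_R$ or $y\in B_R$, which will let me treat $x$ outside $B_{2R}$ by a simple pointwise estimate.

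I would decompose the operator based on $x$ and on the distance $|x-y|$. For $|x|\geq 2R$ (so $\phi_R(x)=0$), the integrand is supported on $y\in B_R$ where $|x-y|\gtrsim |x|$, and Cauchy--Schwarz gives $\int_{B_R} |f(y)|\,dy \lesssim R^n \norm{f}_{X_2}$; this yields the pointwise bound $|[\Lambda^{1/2},\phi_R]f(x)|\lesssim |x|^{-(n+1/2)} R^n\norm{f}_{X_2}$, which integrates in $L^2(\R^n\setminus B_{2R})$ to the desired constant $R^{(n-1)/2}\norm{f}_{X_2}$. For $x\in B_{2R}$, I split the $y$-integral into a near piece $|x-y|<R$ and a far piece $|x-y|\geq R$.

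On the near piece I use the smoothness estimate $|\phi_R(x)-\phi_R(y)|\lesssim R^{-1}|x-y|$, which yields the integrable kernel bound $|K_{\near}(x,y)|\lesssim R^{-1}|x-y|^{-(n-1/2)}\mathbf{1}_{|x-y|<R}$. Both $\int |K_\near(x,y)|\,dy$ and $\int|K_\near(x,y)|\,dx$ are bounded by $cR^{-1/2}$, so Schur's test gives an $L^2(B_{3R})\to L^2(B_{2R})$ norm of order $R^{-1/2}$; combined with $\norm{f}_{L^2(B_{3R})}\lesssim R^{n/2}\norm{f}_{X_2}$, this again produces $R^{(n-1)/2}\norm{f}_{X_2}$. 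On the far piece the kernel satisfies $|K_\far(x,y)|\lesssim |x-y|^{-(n+1/2)}$ and one performs a dyadic decomposition $|x-y|\sim 2^j R$, $j\geq 0$. For each annulus, Cauchy--Schwarz together with the translation comparison $\norm{f}_{L^2(B(x,2^jR))}\lesssim (2^jR)^{n/2}\norm{f}_{X_2}$ (valid because $B(x,2^jR)\subset B(0,2^{j+2}R)$ when $x\in B_{2R}$) gives a contribution $(2^jR)^{-1/2}\norm{f}_{X_2}$, and the geometric series sums to $R^{-1/2}\norm{f}_{X_2}$; multiplying by $|B_{2R}|^{1/2}\sim R^{n/2}$ finishes the far contribution.

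The main technical point is handling the nonlocality against the $X_2$ norm, which is defined in terms of balls centered at the origin rather than arbitrary points. The key observation, used throughout, is the comparison $B(x,r)\subset B(0,C\max(|x|,r))$, which lets one dominate $L^2$ norms on translated balls by the $X_2$ norm at a comparable scale. Once this is in hand the three estimates combine to yield the claimed bound $\norm{[\Lambda^{1/2},\phi_R]f}_{L^2(\R^n)}\lesssim_\phi R^{(n-1)/2}\norm{f}_{X_2}$.
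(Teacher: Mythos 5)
Your proposal is correct and follows essentially the same route as the paper: write the commutator as the integral operator with kernel $c_n(\phi_R(x)-\phi_R(y))/|x-y|^{n+1/2}$, exploit the Lipschitz bound on $\phi_R$ near the diagonal, sum dyadically in the far field against the $X_2$ norm, and use the pointwise decay $|x|^{-(n+1/2)}$ for $x$ outside the support of $\phi_R$. The only (harmless) differences are that the paper first reduces to $R=1$ by scaling and organizes the far field in shells centered at the origin, whereas you track the $R$-dependence directly, use Schur's test for the near piece, and compare balls centered at $x$ with origin-centered balls.
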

We use the convention $[A,B] = AB - BA$. 
\begin{proof}
It suffices to consider $R = 1$. When $x \in B_2$, we have
\begin{equation}
\begin{aligned}
	\left[ \Lambda^{1/2}, \phi \right] f(x) &= {\rm pv} \int_{\R^n} K(x,y) (\phi(x) - \phi(y)) f(y) \, dy \\
	&= \underbrace{\nabla \phi(x) \cdot \int_{B_2} \frac{x-y}{|x-y|^{n+1/2}} f(y) + O(|x-y|^{-n+3/2}) f(y) \, dy}_{g_1} \\
	&\quad + \underbrace{\int_{\R^n \setminus B_2} K(x,y) \phi(x) f(y) \, dy}_{g_2}.
	\end{aligned}
\end{equation}
We clearly have $\norm{g_1}_{L^2(B_2)} \les \norm{f}_{L^2(B_2)}$, since the kernel defining $g_1$ is integrable. Regarding $g_2$, we have
\begin{equation}
	\label{eq:g2part}
\begin{aligned}
	|g_2(x)| &\les \sum_{k=1}^{+\infty} 2^{-nk-k/2} \norm{P_k f}_{L^1(\R^n)} \\
	&\les \norm{f}_{L^2(B_2)} + \sum_{k=1}^{+\infty} 2^{-nk/2-k/2} \norm{f}_{L^2(B_{2^{k+1}} \setminus B_{2^k})} \\
	&\les \norm{f}_{L^2(B_2)} + \sum_{k=1}^{+\infty} 2^{-k/2} \norm{f}_{X_2} \\
	&\les \norm{f}_{X_2},
	\end{aligned}
\end{equation}
and $\norm{g_2}_{L^2(B_2)} \les \norm{f}_{X_2}$.
When $x \not\in B_2$, we have
\begin{equation}
	|[\Lambda^{1/2},\phi] f(x)| = \left\lvert \int_{B_1} K(x,y) \phi(y) f(y) \right\rvert \les |x|^{-n-1/2} \norm{f}_{L^2(B_1)},
\end{equation}
which is estimated in $L^2(\R^n \setminus B_2)$ by $\norm{f}_{L^2(B_1)}$. This completes the proof.
\end{proof}
In particular, when $\phi \equiv 1$ on $B_{1/2}$, we have
\begin{equation}
	\label{eq:sobolevembeddingcontinued}
	\norm{f}_{L^\kappa(B_R)} \overset{\eqref{eq:sobolevembedding}}{\les} \norm{\Lambda^{1/2} (f \phi_{2R})}_{L^2(B_{2R})} \overset{\eqref{eq:commutatorest}}{\les}_\phi \norm{\Lambda^{1/2} f}_{L^2(B_{2R})} + R^{(n-1)/2} \norm{f}_{X_2}.
\end{equation}

\begin{remark}[Local energy inequality]\label{rmk:localenergyinequality} We will use a version of the local energy inequality in what follows. In our applications, the computations to produce such an inequality are justified by smoothness. However, the local energy inequality can be established under weaker conditions which we now identify.
Let $T > 0$. Let $\theta \in A_T \cap E_T$ with $\p_t \theta \in (L^2_t H^{-1/2}_x)_\loc(Q_T)$. Let  $v \in (L^\infty_t C^{1/2}_x)_\loc(Q_T)$ be a divergence-free vector field satisfying
\begin{equation}
	\label{eq:lineareq}
	\left[ \p_t \theta + \Lambda \theta + \div (v \theta) \right] \cdot \theta \leq 0
\end{equation}
in the sense of distributions (with non-negative test functions) on $Q_T$. This is akin to the global energy situation in Lemma~\ref{lem:l2theory}, although we allow inequality in~\eqref{eq:lineareq} to account for the possibility that $\theta = |\eta|^{p/2 -1}\eta$, where $\eta$ is a smooth solution of~\eqref{eq:sqg}. In the proof of Lemma~\ref{lem:aprioriestimates}, it is shown, using the commutator estimate in Lemma~\ref{lem:muhcommutatorest}, that the term $\theta \Lambda \theta$ is well defined as a distribution under these assumptions. Then $\theta$ satisfies the local energy inequality
\begin{equation}
	\label{eq:localenergyeq}
	- \frac{1}{2} \iint |\theta|^2 \p_t \phi \, dx \,dt + \la \Lambda \theta, \theta \phi \ra \leq \frac{1}{2} \iint |\theta|^2  v \cdot \nabla \phi \, dx \,dt
\end{equation}
for every non-negative $\phi \in C^\infty_0(Q_T)$. Let $\theta_0 \in X_2$. If~\eqref{eq:attaininitialdata} is satisfied, then we furthermore have
\begin{equation}
	\label{eq:localenergyineqtimeslice}
\begin{aligned}
	&\int |\theta(x,t_2)|^2 \phi(x) \, dx + 2 \int_{t_1}^{t_2} \la \Lambda \theta(\cdot,t), \theta(\cdot,t) \phi \ra \, dt \leq \int |\theta(x,t_1)|^2 \phi(x) \, dx \\
	&\quad+ \int_{t_1}^{t_2} \int_{\R^n} |\theta|^2 v \cdot \nabla \phi \, dx \, dt
	\end{aligned}
\end{equation}
for a.e. $t_1 \in [0,T)$, including $t_1 = 0$, every $t_2 \in (t_1,T]$, and every non-negative $\phi \in C^\infty_0(\R^n)$.

 
\end{remark}

 \begin{lemma}[Local energy estimates]
\label{lem:aprioriestimates}
 
Let $\theta_0 \in X_2$ and $v$ be a divergence-free vector field satisfying 
\begin{equation}
	 \norm{v}_{L^\infty_t (X_p)_x(\R^n \times \R_+)} \leq V,
\end{equation}
where $p \in (n,+\infty)$ (or $p = n$ and $V \ll 1$) and $\th\in A_T\cap E_T$ for every $T > 0$. 
Suppose that the local energy inequality
\begin{equation}\label{eq:linearinequality}
	\p_t |\theta|^2 + 2 \theta \Lambda \theta + \div (v |\theta|^2 ) \leq 0
\end{equation}
holds 
in the sense of distributions (with non-negative test functions), and the initial data $\theta_0$ is attained in the sense of~\eqref{eq:attaininitialdata}. 
Then 
\begin{equation}
	\label{eq:energyestimatesuitableforGronwall}
	\norm{\theta}_{A_T}^2 + \norm{\theta}_{E_T}^2 \les  \norm{\theta_0}^2_{X_2}  +\int_0^T \|\th\|_{A_t}^2\,dt+  C_p  \int_0^T \|v\|_{L^\I (0,t;X_p)}^{2/(1-\tau)} \|\th\|_{A_t}^2 \,dt,
\end{equation}
where $\tau = \tau(p) \in (0,1]$ is defined in~\eqref{eq:taudef}.
Consequently, there exists $T_0 = T_0(p,V)  \in (0,1]$ such that
\begin{equation}
	\label{eq:energyestimatelesssuitable}
	\norm{\theta}_{A_{T_0}} + \norm{\theta}_{E_{T_0}} \les_p \norm{\theta_0}_{X_2}.
\end{equation}
Hence, by scaling invariance, we have, for all $T \geq T_0$,
\begin{equation}
	\norm{\theta}_{A_{T}^{T/T_0}} + \norm{\theta}_{E_T^{T/T_0}} \les_p \norm{\theta_0}_{X_2^{T/T_0}} \les_p \norm{\theta_0}_{X_2}.
\end{equation}
\end{lemma}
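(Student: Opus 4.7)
The plan is to localize the time-sliced local energy inequality~\eqref{eq:localenergyineqtimeslice} against a spatial cutoff $\phi_R(x)=\phi(x/R)$ with $\phi\in C^\infty_0(B_2)$, $\phi\equiv 1$ on $B_1$, for each $R\ge 1$, obtain a bound whose $R$-dependence is encoded cleanly by a factor $R^n$, divide by $R^n$, take the supremum over $R\ge 1$, and close via Gronwall on a short time interval whose length depends on $V$.

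First I would split the dissipation term as
\begin{equation*}
\langle \Lambda\theta,\theta\phi_R\rangle = \int|\Lambda^{1/2}\theta|^2\phi_R\,dx + \langle\Lambda^{1/2}\theta,[\Lambda^{1/2},\phi_R]\theta\rangle,
\end{equation*}
retaining the non-negative first piece, which bounds $\int_{B_R}|\Lambda^{1/2}\theta|^2\,dx$ from below, on the LHS. The commutator is handled by Cauchy--Schwarz combined with Lemma~\ref{lem:muhcommutatorest}, giving a product bounded by $\varepsilon\|\Lambda^{1/2}\theta\|_{L^2(B_{2R})}^2+C_\varepsilon R^{n-1}\|\theta\|_{X_2}^2$. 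After dividing by $R^n$ and taking $\sup_{R\ge 1}$, the first summand is absorbed into the $E_T$ norm (using the elementary dilation identity $R^{-n}\!\int_{B_{cR}}(\cdot)= c^n(cR)^{-n}\!\int_{B_{cR}}(\cdot)$, so that $\sup_R$ swallows the constant $c^n$), while the second contributes $\int_0^t\|\theta\|_{A_\sigma}^2\,d\sigma$ after time integration.

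The delicate term is the drift flux $\int|\theta|^2 v\cdot\nabla\phi_R\,dx$. I would use $|\nabla\phi_R|\les R^{-1}\mathbf 1_{B_{2R}}$, H\"older's inequality with exponents $(p,2p')$, and $\|v\|_{L^p(B_{2R})}\le CR^{n/p}\|v\|_{X_p}$ to reduce matters to $R^{n/p-1}\|v\|_{X_p}\|\theta\|_{L^{2p'}(B_{2R})}^2$. Interpolating $L^{2p'}$ between $L^2$ and the Sobolev exponent $L^\kappa$ with parameter $\tau=\tau(p)\in(0,1]$ defined by
\begin{equation*}
\frac{1}{2p'}=\frac{1-\tau}{2}+\frac{\tau}{\kappa},
\end{equation*}
so that $\tau<1$ iff $p>n$, and then invoking the Sobolev-plus-commutator estimate~\eqref{eq:sobolevembeddingcontinued}, allows me to rewrite the $L^{2p'}$ factor in terms of $\|\Lambda^{1/2}\theta\|_{L^2(B_{4R})}^{2\tau}+R^{(n-1)\tau}\|\theta\|_{X_2}^{2\tau}$. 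For $p>n$, Young's inequality with exponents $(1/\tau,1/(1-\tau))$ then splits the drift contribution into an absorbable piece $\varepsilon\|\Lambda^{1/2}\theta\|_{L^2(B_{4R})}^2$ and a Gronwall-ready piece of size $C_\varepsilon R^{n-1}\|v\|_{X_p}^{2/(1-\tau)}\|\theta\|_{X_2}^2$; the borderline case $p=n$, $\tau=1$ uses the smallness hypothesis $V\ll 1$ to absorb without Young's.

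Combining these estimates, dividing by $R^n$, and taking $\sup_{R\ge 1}$ produces exactly~\eqref{eq:energyestimatesuitableforGronwall}; Gronwall then gives~\eqref{eq:energyestimatelesssuitable} on an interval $[0,T_0]$ with $T_0=T_0(p,V)$ chosen so the coefficient remains $O(1)$, and the $T\ge T_0$ assertion follows from rescaling $(x,t)\mapsto(\mu x,\mu t)$, $\mu=T/T_0$, under which $\|v\|_{L^\infty X_p}$ and the semi-norms $\|\cdot\|_{A_T^{R_0}}$, $\|\cdot\|_{E_T^{R_0}}$ transform compatibly. The main obstacle I anticipate is the careful bookkeeping across the concentric balls $B_R\subset B_{2R}\subset B_{4R}$ produced by the commutator and Sobolev steps: the absorption only closes because $\sup_{R\ge 1}$ is taken over all radii simultaneously, so the scale gap reduces to a harmless combinatorial constant rather than a genuine loss.
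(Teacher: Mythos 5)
Your treatment of the drift term, the Gr\"onwall step, and the final rescaling coincides with the paper's argument (same H\"older--interpolation with $\tau$ from~\eqref{eq:taudef}, same use of~\eqref{eq:sobolevembeddingcontinued}, same Young splitting with the $p=n$ borderline handled by smallness), so that part is fine. The genuine gap is in the commutator piece of the dissipation. You claim that $\la \Lambda^{1/2}\theta,[\Lambda^{1/2},\phi_R]\theta\ra$ is bounded via Cauchy--Schwarz and Lemma~\ref{lem:muhcommutatorest} by $\varepsilon\norm{\Lambda^{1/2}\theta}_{L^2(B_{2R})}^2+C_\varepsilon R^{n-1}\norm{\theta}_{X_2}^2$. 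This silently restricts the pairing to $B_{2R}$, which is not legitimate: $[\Lambda^{1/2},\phi_R]\theta$ is \emph{not} supported in $B_{2R}$ (outside $B_{2R}$ it equals $\Lambda^{1/2}(\theta\phi_R)$, which only decays like $|x|^{-(n+1/2)}$), and $\Lambda^{1/2}\theta$ is not globally square-integrable in your setting --- the $E_T$ control only gives $\norm{\Lambda^{1/2}\theta}_{L^2(B_\rho\times(0,T))}\les \rho^{n/2}\norm{\theta}_{E_T}$, with norms that grow in $\rho$. So a global Cauchy--Schwarz is unavailable, and the localized one you wrote down is unjustified as stated.

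The paper closes exactly this point by splitting $1=\phi_{2R}+(1-\phi_{2R})$ in the commutator pairing: the near-field part is handled by Cauchy--Schwarz on $B_{2R}$ together with~\eqref{eq:commutatorest}, while the far-field part uses the pointwise tail bound $|\Lambda^{1/2}(\theta\phi_R)(x)|\les |x|^{-(n+1/2)}R^{n/2}\norm{\theta}_{L^2(B_R)}$, a decomposition into dyadic annuli $B_{2^kR}\setminus B_{2^{k-1}R}$, and --- crucially --- integration in time \emph{before} summing, so that each shell contributes $(2^kR)^{n/2}$ from the $E_T$ norm against $(2^kR)^{-(n+1/2)}(2^kR)^{n/2}$ from the kernel and measure, giving a convergent geometric series $\sum_k 2^{-k/2}$ and the bound $R^{n-1/2}\norm{\theta}_{E_T}\bigl(\int_0^T\norm{\theta}_{A_t}^2\,dt\bigr)^{1/2}$. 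Your closing remark about concentric balls addresses only the absorption constants, not this far-field tail, which is the one genuinely nonlocal difficulty in the dissipation term. The fix is available from ingredients you already cite (the tail estimate is implicit in the proof of Lemma~\ref{lem:muhcommutatorest}), but as written the step would fail and the proof does not close without it.
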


The assumption $p \geq n$ implies that $v |\theta|^2$ belongs to $L^1_\loc$. 
 
We adopt the notation $Q_{R,T} = B_R \times (0,T)$.

\begin{proof}
Fix $R\geq 1$. Fix $0 \leq \phi \in C^\infty_0(B_1)$ with $\phi \equiv 1$ on $B_{3/4}$. Let $\phi_R = \phi(\cdot/R)$. (Constants below may implicitly depend on $\phi$.) 

For the diffusive term, we use the following trick from~\cite{Lazar1}:
\begin{equation}
\begin{aligned}
\la \La \th ,\th\phi_R \ra &= \int |\La^{1/2}\th|^2\phi_R\,dx + \int \La^{1/2}\th [ \La^{1/2},\phi_R  ]\th\,dx \\
&= \int |\La^{1/2}\th|^2\phi_R\,dx + \int \phi_{2R }\La^{1/2}\th [ \La^{1/2},\phi_R  ]\th\,dx \\
&\quad +\int (1-\phi_{2R })\La^{1/2}\th [ \La^{1/2},\phi_R  ]\th\,dx.
\end{aligned}
\end{equation}
By the commutator estimate in Lemma~\ref{lem:muhcommutatorest}, we have, for a.e. $t \in (0,T)$,
\begin{equation}
\begin{aligned}
 \int |\phi_{2R }\La^{1/2}\th [ \La^{1/2},\phi_R  ]\th |\,dx  &\les \norm{\Lambda^{1/2} \theta}_{L^2(B_{2R})} \norm{[\Lambda^{1/2}, \phi_R]}_{L^2(B_{2R})} \\
& \overset{\eqref{eq:commutatorest}}{\les}  R^{(n-1)/2}  \norm{\Lambda^{1/2} \theta}_{L^2(B_{2R})}  \norm{\theta}_{X_2}. \\
\end{aligned}
\end{equation}
Integrating in time, we have
\begin{equation}
	\int_0^T \int |\phi_{2R }\La^{1/2}\th [ \La^{1/2},\phi_R  ]\th| \,dx \, dt  \les R^{n-1/2} \norm{\theta}_{E_T} \left( \int_0^T \|\th\|_{A_t}^2\,dt \right)^{1/2}.
\end{equation}
On the other hand, for a.e. $t \in (0,T)$, we have
\EQ{
\int (1-\phi_{2R })\La^{1/2}\th [ \La^{1/2},\phi_R  ]\th\,dx &= \int (1-\phi_{2R })\La^{1/2}\th\La^{1/2}(\th \phi_R)\,dx.
}
We need only estimates in the region $\{ x \not\in B_R \}$, where we have the pointwise bound
\begin{equation}
	|\Lambda^{1/2} (\theta \phi_R)(x)| \les \left\lvert \pv \int_{\R^n} \frac{ - \theta(y)\phi_R(y)}{|x-y|^{n+1/2}} \, dy \right\rvert \les |x|^{-(n+1/2)} \norm{\theta}_{L^1(B_R)} \les |x|^{-(n+1/2)} R^{n/2} \norm{\theta}_{L^2(B_R)}.
\end{equation}
This implies
\begin{equation}
	\label{eq:summyseries}
\begin{aligned}
	& \int |(1-\phi_{2R })\La^{1/2}\th\La^{1/2}(\th \phi_R)| \,dx\\
	&\quad \les  R^{n/2} \sum_{k=1}^{+\infty} \int_{B(2^{k} R) \setminus B(2^{k-1} R)} |x|^{-(n+1/2)} |\Lambda^{1/2} \theta(x,t)| \, dx \times \norm{\theta(\cdot,t)}_{L^2(B_R)} \\
	&\quad \les R^{n/2} \sum_{k=1}^{+\infty} (2^k R)^{n/2} (2^k R)^{-(n+1/2)} \norm{\Lambda^{1/2} \theta(\cdot,t)}_{L^2(B_{2^k R})} \times \norm{\theta(\cdot,t)}_{L^2(B_R)}.
	\end{aligned}
\end{equation}
To obtain the desired bound for the above term, we need to integrate in time at this point. Then, summing the geometric series in~\eqref{eq:summyseries}, we have
\begin{equation}
\begin{aligned}
	&\int_0^T \int |(1-\phi_{2R })\La^{1/2}\th\La^{1/2}(\th \phi_R)| \,dx \\
	&\quad \les R^{-1/2}  \sup_{k \in \N} \, \norm{\Lambda^{1/2} \theta}_{L^2_{t,x}(B_{2^k R} \times (0,T))} \norm{\theta(\cdot,t)}_{L^2_{t,x}(Q_{R,T})} \\
	&\quad\les R^{n-1/2} \norm{\theta}_{E_T} \left( \int_0^T \norm{\theta}_{A_t}^2 \, dt \right)^{1/2}.
	\end{aligned}
\end{equation}

Putting these estimates together and integrating in time, we see that
\begin{equation}
	\label{eq:intermediarystep}
\begin{aligned}
	&\sup_{t \in (0,T)} \int_{B_{R}} |\theta(x,t)|^2 \phi_R \, dx + 2\int_{0}^T \int_{B_{R/2}} |\Lambda^{1/2} \theta|^2 dx \, dt \leq \norm{\theta_0 \phi_R^{1/2}}^2_{L^2(B_{R})} \\
	&\quad+ CR^{n-1/2} \norm{\theta}_{E_T} \left( \int_0^T \norm{\theta}_{A_t}^2 \, dt \right)^{1/2} + \int_{0}^{T} \int_{\R^n} |\theta|^2 |v \cdot \nabla \phi_R| \, dx \, dt.
	\end{aligned}
\end{equation}

We now analyze the drift term: 
\begin{equation}
	\label{eq:drifttermest}
\begin{aligned}
	&\int_{0}^{T} \int_{\R^n} |\theta|^2 |v \cdot \nabla \phi_R| \, dx \, dt  \\
	&\quad \les R^{-1} \int_0^T \norm{v(\cdot,t)}_{L^p_x(B_R)} \norm{\theta(\cdot,t)}^2_{L^{2p'}_x(B_R)} \, dt \\
	&\quad \les R^{-1} \int_0^T \norm{v}_{L^\infty(0,t;L^p(B_R))} \norm{\theta(\cdot,t)}_{L^{\kappa}_x(B_R)}^{2\tau} \norm{\theta(\cdot,t)}_{L^2_{x}(B_R)}^{2(1-\tau)} \, dt \\
	&\quad \les R^{-1} \int_0^T \norm{v}_{L^\infty(0,t;L^p(B_R))} [\norm{\Lambda^{1/2} \theta(\cdot,t)}_{L^2(B_{2R})} + R^{-1/2} \norm{\theta(\cdot,t)}_{L^2(B_{2R})} ]^{2\tau} \norm{\theta(\cdot,t)}_{L^2(B_R)}^{2(1-\tau)} \, dt
	\end{aligned}
\end{equation}
 where $\tau = \tau(p) \in (0,1]$ satisfies
 \begin{equation}
 	\label{eq:taudef}
	\frac{\tau}{\kappa} + \frac{1-\tau}{2} = \frac{1}{2p'},
 \end{equation}
 and we have used Sobolev embedding $H^{1/2}(B_R) \into L^\kappa(B_R)$ as in~\eqref{eq:sobolevembedding} and~\eqref{eq:sobolevembeddingcontinued}.
 This is possible because $p \geq n$ and $n' = \kappa/2$ give $2p' \in [2,\kappa]$. Of course, $\tau = 1$ only when $p = n$. When $p > n$, we split the above product by Young's inequality:
 \begin{equation}
 \begin{aligned}
	&\int_{0}^{T} \int_{\R^n} |\theta|^2 |v \cdot \nabla \phi_R| \, dx \, dt \\
	&\quad \leq 2^{-2n-1} \norm{\Lambda^{1/2} \theta}_{L^2_{t,x}(Q_{2R,T})}^2  + \int_0^T \left[ R^{-1} +  C_p (R^{-1} \norm{v}_{L^\infty(0,t;L^p(B_R))})^{2/(1-\tau)} \right] \times \norm{\theta(\cdot,t)}_{L^2(B_{2R})}^2 \, dt\\
	&\quad \leq 2^{-n-1} |B_1|^2 R^{n} \norm{\theta}_{E_T}^2  +R^{n-1}\int_0^T \|\th\|_{A_t}^2\,dt+  C_p R^{n} \int_0^T \|v \|_{L^\infty(0,t;X_p)}^{2/(1-\tau)} \norm{\theta}_{A_t}^2 \, dt
	\end{aligned}
 \end{equation}
 since $R^{-1} \leq R^{-n/p} \leq 1$. When $p=n$, one does not split this term and instead uses smallness to close.
 We insert the above estimate into~\eqref{eq:intermediarystep}, divide by $|B_1|^2 (2R)^n$, and take the supremum over $R \geq 2$:
 \begin{equation}
\begin{aligned}
	&\norm{\theta}_{A_T}^2 + 2 \norm{\theta}_{E_T}^2 \leq C \norm{\theta_0}_{X_2}^2 + C R^{-1/2} \norm{\theta}_{E_T} \left( \int_0^T \norm{\theta}_{A_t}^2 \, dt \right)^{1/2} \\
	&\quad+ \frac{1}{2} \norm{\theta}_{E_T}^2 + C R^{-1}\int_0^T \|\th\|_{A_t}^2\,dt+  C_p \int_0^T \|v \|_{L^\infty(0,t;X_p)}^{2/(1-\tau)} \norm{\theta}_{A_t}^2 \, dt.
	\end{aligned}
\end{equation}
We split the product $\norm{\theta}_{E_T} \left( \int_0^T \norm{\theta}_{A_t} \, dt \right)^{1/2}$ with Young's inequality and absorb the $E_T$ terms on the LHS. This gives~\eqref{eq:energyestimatesuitableforGronwall}. 
Finally, taking $T = T_0 \ll_{p,V} 1$ gives~\eqref{eq:energyestimatelesssuitable}.
\end{proof}

\begin{remark}
\label{rmk:furtherobservations}
\begin{enumerate}
\item Observe that~\eqref{eq:energyestimatesuitableforGronwall} also implies a rate $\sim (T - T^*)^{-1}$ at which we lose control of $\norm{\theta}_{A_T}^2$ in the above estimates.

\item By substituting~\eqref{eq:energyestimatelesssuitable} back into the above computations, we may also show
	\begin{equation}
\label{eq:importantlowersemicontinuityfact}
	\int |\theta(x,t)|^2 \phi_R \, dx \leq \int |\theta_0|^2 \phi_R \, dx + C_p R^{n-1/2} (T^{1/2} + V T^{1-\tau}) \norm{\theta_0}_{X_2}^2.
\end{equation}
This requires adjusting the estimate~\eqref{eq:drifttermest}. We have
\begin{equation}
	\label{eq:drifttermestadjusted}
\begin{aligned}
	&\int_{0}^{T} \int_{\R^n} |\theta|^2 |v \cdot \nabla \phi_R| \, dx \, dt \\
	&\quad \les R^{-1} \int_0^T \norm{v}_{L^\infty(0,t;L^p(B_R))} [\norm{\Lambda^{1/2} \theta(\cdot,t)}_{L^2(B_{2R})} + R^{-1/2} \norm{\theta(\cdot,t)}_{L^2(B_{2R})} ]^{2\tau} \norm{\theta(\cdot,t)}_{L^2(B_R)}^{2(1-\tau)} \, dt \\
	&\quad \les V R^{n-1} \left[ \norm{\theta}_{E_T} + R^{-1/2} T^{1/2} \norm{\theta}_{A_T} \right]^{2\tau} \times (T^{1/2} \norm{\theta}_{A_T})^{2(1-\tau)} \\
	&\quad \les V C_p R^{n-1} T^{1-\tau} \norm{\theta_0}_{X_2}^2,
	\end{aligned}
\end{equation}
where we also use that $T_0 \leq 1$.
One may substitute this into~\eqref{eq:intermediarystep} to conclude~\eqref{eq:importantlowersemicontinuityfact}. 
This will be used in Lemma~\ref{lem:stabilityL2loc} to show that the energy inequality starting from the initial time holds.

\item We also see from~\eqref{eq:importantlowersemicontinuityfact} that if $R^{-n} \int_{B_R} |\theta_0|^2 \, dx$ decays as $R \to +\infty$ (for example, if $\theta_0$ belongs to the closure of test functions in $X_2$), then $\theta(\cdot,t)$ inherits an analogous decay, uniformly in $t \in (0,T_0)$, as $R \to +\infty$.

\end{enumerate}
\end{remark}

The following stability lemma will be used in the proof of Theorem \ref{thrm.existence}.
 
\begin{lemma}[Stability]\label{lem:stabilityL2loc}
Assume $\th_0^{(k)}\to \th_0$ in $L^2(K)$ for every compact set $K$   and $v^{(k)}\wstar v$ in  $L^\infty_t (X_p)_x(\R^n \times \R_+)$. Let $\th^{(k)}$ be the solution to~\eqref{eq:linear} with initial data $\th_0^{(k)}$ and drift $v^{(k)}$.
Assume that $\th^{(k)},\theta^{(k)}_0,v^{(k)}$ satisfy the assumptions of Lemma~\ref{lem:aprioriestimates}. Then, there exists a subsequence (still indexed by $k$) and $\theta$ solving~\eqref{eq:linear} with initial data $\th_0$ and drift $v$,
and 
\EQ{\label{all.your.limit.are.belong.to.us}
&\th^{(k)} \wstar \th \text{  in }L^\I(0,N;L^2(B_N))
\\&\th^{(k)}  \wto \th \text{   in } L^2(0,N ;H^{1/2}(B_N)),
\\& \La^{1/2} \th^{(k)}  \wto \La^{1/2}\th  \text{  in } L^2(0,N ;L^2(B_N)),
\\& \partial_t \th^{(k)}  \wto \partial_t \th \text{   in }L^2(0,N;H^{-(n+1)} (B_N)),
\\&\th^{(k)}  \to \th \text{  in } L^2(0,N ;L^2(B_N)),
} 
for every $N\in \N$.
Moreover, $\th$ satisfies the local energy inequalities \eqref{eq:localenergyeq} and \eqref{eq:localenergyineqtimeslice} and attains its initial data in the sense of \eqref{eq:attaininitialdata}. Hence, it satisfies the \emph{a priori} bounds in Lemma~\ref{lem:aprioriestimates}. Finally (when $n=2$), if $\theta^{(k)}$ is symmetric (resp. $\lambda$-DSS) for all $k \in \N$, then $\theta$ is symmetric (resp. $\lambda$-DSS). %
\end{lemma}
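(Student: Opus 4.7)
The plan is to extract a limit $\theta$ from the weak compactness inherent in the \emph{a priori} bounds of Lemma~\ref{lem:aprioriestimates}, and then pass to the limit in both the equation and the local energy inequality. First, applying Lemma~\ref{lem:aprioriestimates} simultaneously to each $\theta^{(k)}$ with the common drift bound $V = \sup_k \|v^{(k)}\|_{L^\infty_t (X_p)_x}$ (finite by Banach--Steinhaus) furnishes a single $T_0>0$ and uniform bounds on $\theta^{(k)}$ in $A_T \cap E_T$ for every $T>0$, via the scaling statement of the lemma. Banach--Alaoglu then provides a subsequence realizing the first three weak convergences in~\eqref{all.your.limit.are.belong.to.us}.

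The next step is a time-derivative estimate. Writing $\partial_t \theta^{(k)} = -\Lambda \theta^{(k)} - \div(v^{(k)} \theta^{(k)})$ and testing against $\phi \in C^\infty_0(B_N)$, the identity $\langle \Lambda \theta^{(k)}, \phi\rangle = \langle \theta^{(k)}, \Lambda \phi\rangle$ together with the decay $|\Lambda \phi(x)| \les_\phi (1+|x|)^{-n-1}$ gives $|\langle \Lambda \theta^{(k)}, \phi\rangle| \les_\phi \|\theta^{(k)}\|_{X_1}$, while the drift term is controlled by $\|v^{(k)}\|_{X_p}$ together with the Sobolev embedding $H^{1/2}\hookrightarrow L^\kappa$. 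This yields a uniform bound on $\partial_t \theta^{(k)}$ in $L^2(0,N;H^{-(n+1)}(B_N))$, so Simon's refinement of the Aubin--Lions lemma upgrades the weak $L^2_t H^{1/2}_x$ convergence to the strong $L^2_{t,x,\loc}$ convergence in the last line of~\eqref{all.your.limit.are.belong.to.us}.

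Passing to the limit in the PDE is then routine: the linear terms converge by the weak convergences, and $v^{(k)}\theta^{(k)}$ converges in the sense of distributions as the product of a strongly $L^2_{t,x,\loc}$-convergent sequence with one weakly-$\ast$ convergent in $L^\infty_t(X_p)_x$. The harder step is the local energy inequality. Its LHS is handled term by term: $\iint |\theta^{(k)}|^2 \partial_t \phi$ passes by strong $L^2_{t,x,\loc}$ convergence, while $\langle \Lambda\theta^{(k)}, \theta^{(k)} \phi\rangle$ is decomposed as in the proof of Lemma~\ref{lem:aprioriestimates} into a bulk piece $\int |\Lambda^{1/2} \theta^{(k)}|^2 \phi$, passed by $\liminf$ using weak convergence of $\Lambda^{1/2}\theta^{(k)}$, plus commutator/far-field corrections passed as products of strongly/weakly convergent sequences. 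I anticipate the main obstacle to be the RHS drift term, where one needs
\[
\iint |\theta^{(k)}|^2 v^{(k)} \cdot \nabla\phi \, dx \, dt \to \iint |\theta|^2 v \cdot \nabla\phi \, dx \, dt.
\]
This requires strong convergence of $|\theta^{(k)}|^2$ in $L^{p'}_{t,x,\loc}$, which follows by interpolating the strong $L^2_{t,x,\loc}$ convergence against the uniform $L^2_t L^\kappa_x$ bound coming from Sobolev embedding as in~\eqref{eq:sobolevembeddingcontinued}; the hypothesis $p > n$ is precisely what makes the interpolation yield enough integrability.

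Finally, to identify the initial data, pass the estimate~\eqref{eq:importantlowersemicontinuityfact} to the limit using lower semicontinuity of the $L^2$ norm on the LHS and $L^2_{\loc}$ convergence of $\theta_0^{(k)}$ on the RHS; combined with weak $L^2_{\loc}$-continuity of $\theta$ at $t=0$ inherited from the uniform $\partial_t$ estimate, this yields $\theta(\cdot,t)\to\theta_0$ in $L^2_{\loc}$ as $t\to 0^+$, which is~\eqref{eq:attaininitialdata}. Symmetry and $\lambda$-DSS of $\theta$ follow automatically, since both properties are preserved under all the weak and weak-$\ast$ convergences involved.
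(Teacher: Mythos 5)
Your proposal follows essentially the same structure as the paper's proof: uniform bounds from Lemma~\ref{lem:aprioriestimates} plus a time-derivative estimate to run Aubin--Lions, pass to the limit in the PDE by strong-times-weak$\ast$ convergence, pass the diffusion term in the local energy inequality by decomposing $\la \Lambda\theta,\theta\phi\ra$ into a bulk piece handled by lower semicontinuity and commutator corrections, handle the advective term by interpolating strong $L^2_{t,x,\loc}$ against the $L^2_t L^\kappa_x$ bound (using $p>n$), and obtain strong attainment of the initial data from~\eqref{eq:importantlowersemicontinuityfact}. Two caveats of emphasis: the paper regards the diffusive commutator, not the advective term, as the delicate step, and indeed your phrase ``passed as products of strongly/weakly convergent sequences'' glosses over the far-field piece $\int_0^T\!\int \Lambda^{1/2}(\theta-\theta^{(k)})\,(1-\phi_{2N})\,[\Lambda^{1/2},\phi]\theta$, where $\Lambda^{1/2}(\theta-\theta^{(k)})$ is only controlled in $E_T$ (not globally in $L^2$) and one must first truncate at a large radius $N_*$ and estimate the outer tail uniformly in $k$ before invoking weak convergence on the bounded region; likewise your target space $L^{p'}_{t,x,\loc}$ for $|\theta^{(k)}|^2$ is slightly off---what the interpolation actually delivers, and what suffices, is strong convergence of $\theta^{(k)}$ in $L^2_t L^q_x$ for $q<\kappa$ and hence of $|\theta^{(k)}|^2$ in $L^1_t L^{p'}_x$ locally---but the underlying idea is correct.
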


 
The proof of Lemma \ref{lem:stabilityL2loc} tailors the usual compactness methods for dissipative fluid models to~\eqref{eq:sqg}. 
Let
\[
X_0= H^{1/2}(B_N),\quad X = L^2(B_N),\quad X_1 = H^{-(n+1)}(B_N).
\]
It is well known that $H^{1/2}(B_N) \overset{\text{cpt}}{\into} L^2(B_N)$ \cite[Theorem 7.1]{HitchhikerGuideFractional}, 
 $L^2(B_N) \into H^{-(n+1)}(B_N)$, and all these spaces are reflexive. Therefore, the Aubin--Lions lemma~\cite{aubin} implies that
 \begin{equation}
 	\label{eq:aubinlions}
\left\lbrace  \th \in  L^2(0,T;X_0): \partial_t \th\in  L^2(0,T;X_1) \right\rbrace \overset{\text{cpt}}{\into} L^2(0,T;X)	
 \end{equation}
when the LHS is endowed with the obvious norm. 
When $\Omega \subset \R^n$ is a smooth, bounded domain, the norm of $H^{1/2}(\Om)$ satisfies
\[
\|\th\|^2_{H^{1/2}(\Om)} \sim \int_{\Om} |\th|^2\,dx +  \int_{\Om}\int_{\Om} \frac {|\th(x)-\th(y)|^2} {|x-y|^{n+1}}\,dx\,dy=: \|\th\|_{L^2(\Om)}^2 + [\th]_{H^{1/2}(\Om)}^2,
\] 
where $A \sim B$ means that $A \les B$ and $B \les A$. The term $[\theta]_{H^{1/2}(\Omega)}$ is the Gagliardo seminorm, and when $\Omega = \R^n$, it is equal to $\text{const.} \times \norm{ \Lambda^{1/2} \theta }_{L^2(\R^n)}$. See~\cite{HitchhikerGuideFractional} for further discussion. To apply~\eqref{eq:aubinlions}, one must relate $\La^{1/2} f$, restricted to a neighborhood of $\overline{\Omega}$, to the $1/2$-Sobolev norm of $f$. We also require an estimate on $\Lambda \theta$. We clarify this here before returning to the proof of Lemma~\ref{lem:stabilityL2loc}.

\begin{lemma}\label{lemma:fractionalsobolev}
Let $R \geq 1$ and $f \in X_2$ with $\Lambda^{1/2} f \in L^2(B_{2R})$. Then
\begin{equation}\label{eq:sobolevseminorm}
	[f]_{H^{1/2}(B_{3R/2})} \les R^{(n-1)/2} \norm{f}_{X_2} + \norm{\Lambda^{1/2} f}_{L^2(B_{2R})}
\end{equation}
 and
\begin{equation}\label{eq:sobolevinverse}
\| \La f \|_{H^{-1}(B_R)} \leq  R^{n/2}\|f\|_{X_2}. 
\end{equation}
\end{lemma}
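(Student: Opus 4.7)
\textbf{Proof plan for Lemma~\ref{lemma:fractionalsobolev}.} The two estimates are essentially commutator/localization statements; I would handle them independently.

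For the Gagliardo seminorm estimate \eqref{eq:sobolevseminorm}, the plan is to reduce to an estimate on $\Lambda^{1/2}$ acting on a localized version of $f$. Fix a radial cutoff $\phi\in C^\infty_0(B_2)$ with $\phi\equiv 1$ on $B_{3/2}$ and set $\phi_R(x)=\phi(x/R)$. Since $\phi_R f=f$ on $B_{3R/2}$, we have
\begin{equation*}
[f]_{H^{1/2}(B_{3R/2})} = [\phi_R f]_{H^{1/2}(B_{3R/2})} \le [\phi_R f]_{H^{1/2}(\R^n)} \sim \norm{\Lambda^{1/2}(\phi_R f)}_{L^2(\R^n)}.
\end{equation*}
Then I split $\Lambda^{1/2}(\phi_R f) = \phi_R\Lambda^{1/2}f + [\Lambda^{1/2},\phi_R]f$. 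The first term is supported in $B_{2R}$ and is dominated by $\norm{\Lambda^{1/2} f}_{L^2(B_{2R})}$, while the second is exactly the object of Lemma~\ref{lem:muhcommutatorest} (the statement there takes $\phi\in C^\infty_0(B_1)$, but the same proof applies verbatim for $\phi\in C^\infty_0(B_2)$ after trivially enlarging the constants), producing $R^{(n-1)/2}\norm{f}_{X_2}$. Adding these yields \eqref{eq:sobolevseminorm}.

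For the $H^{-1}$ estimate \eqref{eq:sobolevinverse}, the strategy is duality plus a near/far split, where the far field is controlled by the pointwise decay of $\Lambda\varphi$. Given $\varphi\in C^\infty_0(B_R)$ with $\norm{\varphi}_{H^1_0(B_R)}\le 1$, I write
\begin{equation*}
\la \Lambda f,\varphi\ra = \int_{\R^n} f\,\Lambda\varphi\,dx = \int_{B_{2R}} f\,\Lambda\varphi\,dx + \int_{\R^n\setminus B_{2R}} f\,\Lambda\varphi\,dx.
\end{equation*}
For the near piece, Plancherel gives $\norm{\Lambda\varphi}_{L^2(\R^n)}=\norm{\nabla\varphi}_{L^2(\R^n)}\le 1$, and Cauchy--Schwarz yields a bound $\norm{f}_{L^2(B_{2R})}\lesssim R^{n/2}\norm{f}_{X_2}$. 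For the far piece I use that $\varphi$ is supported in $B_R$, so when $|x|\ge 2R$,
\begin{equation*}
|\Lambda\varphi(x)| = c_n\left|\int_{B_R}\frac{\varphi(y)}{|x-y|^{n+1}}\,dy\right| \lesssim |x|^{-(n+1)}\norm{\varphi}_{L^1(B_R)} \lesssim |x|^{-(n+1)} R^{n/2+1}\norm{\nabla\varphi}_{L^2(B_R)},
\end{equation*}
using Poincaré. The resulting tail integral is estimated by a dyadic decomposition analogous to \eqref{eq:g2part} or \eqref{eq:summyseries}: $\sum_{k\ge 1}(2^kR)^{-(n+1)}(2^kR)^{n/2}\norm{f}_{L^2(B_{2^{k+1}R})}\lesssim R^{-n/2-1}\norm{f}_{X_2}$, which after combining with the $R^{n/2+1}$ factor gives again $R^{n/2}\norm{f}_{X_2}$. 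Taking the supremum over admissible $\varphi$ yields \eqref{eq:sobolevinverse}.

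The computations themselves are routine; the only subtle point is ensuring the far-field tail in each estimate is summable, which requires exploiting the $X_2$ growth of $f$ against the $|x|^{-(n+1/2)}$ (respectively $|x|^{-(n+1)}$) decay of the kernel -- the same mechanism already used in the proof of Lemma~\ref{lem:muhcommutatorest}.
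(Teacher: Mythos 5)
Your proof follows essentially the same route as the paper's: for \eqref{eq:sobolevseminorm}, localize with a cutoff $\phi_R$ at scale $R$, use equivalence of the Gagliardo seminorm with $\norm{\Lambda^{1/2}(\phi_R f)}_{L^2}$, split off the commutator $[\Lambda^{1/2},\phi_R]f$, and invoke Lemma~\ref{lem:muhcommutatorest}; for \eqref{eq:sobolevinverse}, pair against $\varphi\in H^1_0(B_R)$, split the integral at $|x|=2R$, bound the near piece via Plancherel and Cauchy--Schwarz, and estimate the far piece using the pointwise decay $|\Lambda\varphi(x)|\lesssim |x|^{-(n+1)}\norm{\varphi}_{L^1(B_R)}$ together with a dyadic sum against $\norm{f}_{X_2}$. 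One small arithmetic slip: the tail sum $\sum_{k\ge 1}(2^kR)^{-(n+1)}(2^kR)^{n/2}\norm{f}_{L^2(B_{2^{k+1}R})}$ is $\lesssim R^{-1}\norm{f}_{X_2}$ (since $\norm{f}_{L^2(B_{2^{k+1}R})}\lesssim(2^kR)^{n/2}\norm{f}_{X_2}$, making each summand $\sim(2^kR)^{-1}$), not $R^{-n/2-1}\norm{f}_{X_2}$ as you wrote; combining $R^{-1}$ with your Poincar\'e factor $R^{n/2+1}$ indeed gives the desired $R^{n/2}$, so the final bound you state is correct.
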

 
\begin{proof}
Let $\phi_R\in C_0^\I(\R^n)$ with $\phi \equiv 1$ in $B_{3R/2}$ and vanishing off of $B_{2R}$. Then
\EQN{
 [f ]_{H^{1/2}(B_{3R/2})}^2\leq [f \phi_R ]_{H^{1/2}(B_{2R})}^2 \sim \| \La^{1/2} (\phi_{R} f )\|_{L^2(\R^2)}^2 &\lesssim  \|    \La^{1/2}  f\|_{L^2(B_{2R})}^2 + \| [\Lambda^{1/2},\phi_R] f \|_{L^2(\R^2)}^2\\&\lesssim  R^{(n-1)/2} \norm{f}_{X_2} + \norm{\Lambda^{1/2} f}_{L^2(B_{2R})},
}
where we apply the commutator estimate in Lemma~\ref{lem:muhcommutatorest}. This proves~\eqref{eq:sobolevseminorm}.

We now prove \eqref{eq:sobolevinverse}. Assume $g\in H_0^1(B_R)$. Then
\[
|\la \La f, g\ra_{H^{-1}(B_R),H^1_0(B_R)}| = |\la \La f , g\ra_{H^{-1}(\R^n),H^1_0(\R^n)}|\leq \bigg|\int_{B_{2R}} f \La g\,dx\bigg|+\bigg|\int_{B_{2R}^c} f \La g\,dx\bigg|:=I_1+I_2.
\]
For the first integral, since the domain of integration is bounded, we have
\begin{equation}
	\label{eq:I1est}
	|I_1|\lesssim R^{n/2} \| f \|_{X_2}  \|\La g\|_{L^2(\R^n)}   \sim  R^{n/2} \|f\|_{X_2} \|g\|_{H^1_0(B_N)}
\end{equation}
where we used the following computation to pass from the global to local norm:
\[
\int_{\R^2} |\La g|^2\,dx\sim   \int_{\R^2} (|\xi|^2 \widehat g) \widehat g\,d\xi \sim \int_\Om(-\Delta g) g \,dx =\int_\Om |\nb g|^2\,dx.
\]
On the other hand, for $I_2$, we have  
\begin{equation}
|I_2| \leq  \sum_{i=1}^\I \int_{(2R)^i\leq |x|\leq (2R)^{i+1}} \frac {f(x)} {|x|^{n+1}} \int g(y)\,dy \,dx,	
\end{equation}
where we used $|x|\leq 2|x-y|$ whenever $y\in B_R$ and $2R\leq |x|$. This leads to 
\begin{equation}
\label{eq:I2est}
|I_2| \lesssim  R^{n/2}  \|g\|_{L^2(B_R)} \|f \|_{X_2}.	
\end{equation}
Together,~\eqref{eq:I1est} and~\eqref{eq:I2est} yield the desired estimate.
\end{proof}

\begin{proof}[Proof of Lemma \ref{lem:stabilityL2loc}] We use freely the local energy estimates from Lemma~\ref{lem:aprioriestimates}, which gives the first three lines of~\eqref{all.your.limit.are.belong.to.us}.

\textit{1. Aubin--Lions estimates}. We momentarily omit the index $k$. All estimates will be uniform in $k$. First, Lemma~\ref{lemma:fractionalsobolev} gives that $\th\in L^2(0,T;H^{1/2} (B_N))$. Next, we show that $\partial_t \th = - \Lambda \theta - \div (v \theta) \in L^2_t H^{-(n+1)}_x(B_N)$ for any $N\in \N$. To estimate $\Lambda \theta$, we use Lemma \ref{lemma:fractionalsobolev}:
\[
\|\La \th \|_{H^{-1}(B_N)}\les N^{n/2} \| \th \|_{X_2}  \in L^2(0,T).
\]
Hence, $\La\th\in L^2(0,T; H^{-1}(B_N))\subset L^2(0,T;H^{-(n+1)}(B_N))$. Using H\"older's inequality and the Sobolev embedding $H^n(B_N) \into L^\infty(B_N)$, we have for any $\gamma\in L^2(\R_+;H^{n+1}_0(B_{N}))$ that 
\EQ{
\iint (v\th )\cdot \nb \ga \,dx\,dt &\lesssim T^{1/2} \|v\|_{A_T} \norm{\theta}_{A_T}  \|\ga\|_{L^2 (0,T;H^{n+1}(B_N))},
}
The embedding $X_p \into X_2$ and our assumptions then imply  $\div (v\th)\in L^2((0,T);H^{-(n+1)}(B_N))$. 
It follows that $\partial_t \th \in L^2((0,T);H^{-(n+1)}(B_N))$, and we may now apply the Aubin-Lions lemma to obtain the advertised convergence of a subsequence (still indexed by $k$) to a scalar $\th$.  
 

\textit{2. $\theta$ solves the PDE}. We verify that $\th$ solves \eqref{eq:linear}.  Fix $\phi\in C^\infty_0(\R^2\times \R_+)$. Let $B$ be a ball centered at the origin so that $\supp\phi \subset B$ for all $t$, and let $T$ be large enough that $\phi(x,t)\equiv 0$ for all $t>T$. Weak convergence immediately implies that $\partial_t \th^{(k)}\to \partial_t\th$ in the sense of distributions. For the advective term, convergence follows because $ \th^{(k)} \nb \phi \to \theta \nabla \phi \in L^2_{t,x}(B)$ and $v^{(k)} \wstar v$ in $L^\I(0,T;L^2(B))$ for all $T$.
For the diffusive term, we have 
 \EQN{
\bigg|\iint ( \th^{(k)}-\th )\Lambda\phi \,dx\,dt\bigg| &\lesssim_\phi \int_0^T\int  \frac {|\th^{(k)}-\th |} {(|x|+1)^{n+1}} \,dx\,dt.
\\&\lesssim \| \th^{(k)}-\th\|_{L^2(0,T;L^2(B_M))} +  \int_0^T \int_{|x|>M}  \frac {|\th^{(k)}-\th |} {(|x|+1)^{n+1}} \,dx\,dt.
}
The first term  vanishes as $k \to \I$, and the second term can be made small by taking $M$ large using the uniform-in-$k$ bounds in $A_T$.  

The statement about preservation of symmetry and scaling is obvious. 

\textit{3. Local energy inequality}. Finally, we establish the local energy inequality
\EQ{\label{ineq:lei2}	- \frac{1}{2} \iint |\theta|^2 \p_t \phi \, dx \,dt + \la \Lambda \theta, \theta \phi \ra \leq \frac 1 2 \int |\th_0|^2 \phi \,dx +\frac{1}{2} \iint |\theta|^2  v \cdot \nabla \phi \, dx \,dt,}
for every non-negative function $\phi\in C_0^\I(\R^2\times [0,\I))$.  This is inherited from the corresponding inequalities for the approximating sequence. The above form of the local energy inequality is justified for the approximating sequence because the data is attained in the sense of~\eqref{eq:attaininitialdata}.
Most of the work is in analyzing the diffusive term. Note that, as in Lemma~\ref{lem:aprioriestimates}, 
\[
\la \Lambda \theta, \theta \phi \ra = \la \La^{1/2} \th , (\La^{1/2}\th) \phi \ra + \la \La^{1/2}\th, [\La^{1/2},\phi]\th\ra. 
\]
By weak convergence, we have 
\[
 \la \La^{1/2} \th , (\La^{1/2}\th) \phi \ra \leq \limsup_{k\to \I} \, \la \La^{1/2} \th^{(k)} , (\La^{1/2}\th^{(k)}) \phi \ra, 
\]
implying
\EQ{ \la \La^{1/2} \th , (\La^{1/2}\th) \phi \ra &\leq \limsup_{k\to \I } \big( \la \La \th^{(k)},\th^{(k)}\phi \ra - \la \La^{1/2}\th^{(k)}, [\La^{1/2},\phi]\th^{(k)}\ra\big)
\\&\leq \limsup_{k\to \I } \big( \frac{1}{2} \iint |\theta^{(k)}|^2  v^{(k)} \cdot \nabla \phi \, dx \,dt  + \frac{1}{2} \iint |\theta^{(k)}|^2 \p_t \phi \, dx \,dt  
\\&\quad +\frac 1 2 \int |\th_0^{(k)}|^2\phi \,dx- \la \La^{1/2}\th^{(k)}, [\La^{1/2},\phi]\th^{(k)}\ra \big).
}
By strong convergence the term with the time derivative converges to the corresponding term for $\th$. The same is true for the advective term, where we also use the convergence $v^{(k)} \wstar v$ in $(L^\infty_t L^p_x)_\loc(\R^n \times [0,+\infty))$, $p > n$, and $\theta^{(k)} \to \theta$ in $(L^2_t L^q_x)_\loc(\R^n \times [0,+\infty))$, $q < \kappa$. The initial terms converge by assumption. 
We therefore need to show
\[
\int_0^T\int \La^{1/2}\th^{(k)} [\La^{1/2},\phi]\th^{(k)} \,dx\,dt \to \int_0^T\int \La^{1/2}\th [\La^{1/2},\phi]\th\,dx\,dt.
\]
 Let $\bar \th = \th-\th^{(k)}$. For the claim about commutators we need to show
\[
\int_0^T\int \La^{1/2}\bar \th [\La^{1/2},\phi]\th \,dx\,dt+\int_0^T\int \La^{1/2}\th^{(k)}[\La^{1/2},\phi]\bar \th \,dx\,dt \to 0.
\]
Let $N\in \N$ be large enough so that $\supp\phi \subset B_N$. The terms above are the sum of the following four integrals:
\EQ{
&I_1 = \int_0^T\int \La^{1/2}\bar \th \phi_{2N} [\La^{1/2},\phi]\th \,dx\,dt,
\\&I_2=\int_0^T\int \La^{1/2}\th^{(k)} \phi_{2N} [\La^{1/2},\phi]\bar \th \,dx\,dt,
\\&J_1=\int_0^T\int \La^{1/2}\bar \th (1-\phi_{2N}) [\La^{1/2},\phi]\th \,dx\,dt,
\\&J_2=\int_0^T\int \La^{1/2}\th^{(k)} (1-\phi_{2N}) [\La^{1/2},\phi]\bar \th \,dx\,dt.
}
We show each integral above vanishes in the limit.

For $I_1$, note that $\phi_{2N} [\La^{1/2},\phi]\th\in L^2(0,T;L^2( B_{2N}))$. So, by weak convergence of $\La^{1/2}\th^{(k)}$ to $\La^{1/2}\th$ in $L^2(0,T;L^2( B_{2N}))$, we have $I_1\to 0$ as $n\to \I$.

For $I_2$, we have 
\[
I_2\lesssim_{N} \|\th\|_{E_T}\|    [\La^{1/2},\phi]\bar \th \|_{L^2(B_{2N}\times [0,T])}.
\]
This vanishes as $k\to \I$ by \eqref{eq:commutatorest} and strong convergence of $\th^{(k)}\to \th$ in $L^2(B_{4N}\times [0,T])$.

For $J_1$ we note that 
\begin{equation}
	|\Lambda^{1/2} (\theta \phi)(x)| \les  \bigg|  \int_{\R^n} \frac{ - \theta(y)\phi(y)}{|x-y|^{n+1/2}} \, dy \bigg| \les |x|^{-(n+1/2)} \norm{\theta}_{L^1(B_N)} \les |x|^{-(n+1/2)} N^{n/2} \norm{\theta}_{L^2(B_N)}.
\end{equation}
Then, by support considerations,
\EQ{
J_1 &= \int_0^T\int \La^{1/2}\bar \th (1-\phi_{2N})  \La^{1/2}(\phi \th)   \,dx\,dt
\\&=-\int_0^T   \int_{|x|\geq 2N} \La^{1/2}\bar \th(x) \int \frac {  \th(y)\phi(y)} {|x-y|^{n+1/2}}  \,dy\,dx \,dt
=:-J_1^1-J_1^2,
}
where 
\[
J_1^1=\int_0^T   \int_{2N\leq |x| < N_*} \La^{1/2}\bar \th(x) \int \frac {  \th(y)\phi(y)} {|x-y|^{n+1/2}}  \,dy\,dx \,dt,
\]
and
\[
J_1^2=\int_0^T   \int_{N_*\leq |x|  } \La^{1/2}\bar \th(x) \int \frac {  \th(y)\phi(y)} {|x-y|^{n+1/2}}  \,dy\,dx \,dt.
\]
For any $N_*$, $J_1^1\to 0$ by weak convergence  of $\La^{1/2}\th^{(k)}$ to $\La^{1/2}\th$ in $L^2(0,T;L^2( B_{2N}))$. By taking $N_*$ sufficiently  large, we can make $J_1^2$ is arbitrarily small.

For $J_2$ we have
\EQ{
J_2&\leq \int_0^T\int \La^{1/2}\th^{(k)} (1-\phi_{2N})  \frac {N^{n/2}} {|x|^{n+1/2}} \|\bar \th\|_{L^2(B_{N})} \,dx\,dt
\\& \lesssim_{N,\phi} \| \bar \th\|_{L^2(B_N\times [0,T]) }\|\th\|_{E_T},
}
which vanishes as $N \to \I$.

Convergence to the initial data in the sense \eqref{eq:attaininitialdata} now follows from standard arguments (cf.~\cite{KikuchiSeregin,KwonTsai} in the Navier-Stokes setting). One way is to use
\begin{equation}
	\int |\theta(x,t)|^2 \phi_R \, dx \leq \int |\theta_0|^2 \phi_R \, dx + o_{t \to 0^+}(1),
\end{equation}
which follows from taking $k \to +\infty$ in the estimate~\eqref{eq:importantlowersemicontinuityfact} for $\theta^{(k)}$. Here, the strong convergence $\theta^{(k)}_0 \to \theta_0$ in $L^2_\loc(\R^n)$ plays a crucial role. 
\end{proof}

\begin{corollary}
\label{cor:linftysolsattainstrongly}
The bounded solutions from Lemma~\ref{lem:Linftytheory} attain their initial data strongly in $L^p_\loc(\R^n)$ for all $p < +\infty$.
\end{corollary}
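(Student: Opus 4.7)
The plan is to reduce the statement to strong $L^2_\loc$ convergence at the initial time and then obtain that convergence by applying the stability result of Lemma~\ref{lem:stabilityL2loc} to a refined approximation of $\theta$.

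First, I would observe that by the maximum principle $\|\theta(\cdot,t)\|_{L^\infty} \le \|\theta_0\|_{L^\infty}$, so for any $R > 0$ and $p \in [2,\infty)$ the elementary bound
\begin{equation}
\|\theta(\cdot,t) - \theta_0\|_{L^p(B_R)}^p \le (2\|\theta_0\|_{L^\infty})^{p-2}\,\|\theta(\cdot,t) - \theta_0\|_{L^2(B_R)}^2,
\end{equation}
together with H\"older's inequality for $p<2$, reduces the corollary to proving convergence in $L^2_\loc$.

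Next, I would revisit the existence proof of Lemma~\ref{lem:Linftytheory} and produce an approximating family that attains its initial data strongly in $L^2_\loc$. Specifically, I would select $\theta_0^{(k)} \in C^\infty_0(\R^n)$ by mollification and truncation of $\theta_0$, ensuring $\|\theta_0^{(k)}\|_{L^\infty} \le \|\theta_0\|_{L^\infty}$ and $\theta_0^{(k)} \to \theta_0$ in $L^2_\loc$, and replace $v$ by divergence-free mollifications $v^{(k)} \in L^\infty_t C^{1/2}_x$ with $v^{(k)} \wstar v$ in $L^\infty_t (X_p)_x$ while keeping the BMO, H\"older, and $X_p$ bounds \eqref{eq:driftB}, \eqref{eq:driftN}, \eqref{eq:driftV} uniform in $k$. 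Lemma~\ref{lem:l2theory} then produces the corresponding $L^2$-solutions $\theta^{(k)} \in C([0,1];L^2(\R^n))$, which attain their initial data strongly in $L^2(\R^n)$ and satisfy the local energy equality; Lemma~\ref{lem:aprioriestimates} supplies uniform-in-$k$ bounds in $A_T \cap E_T$ controlled by $\|\theta_0\|_{L^\infty}$, so all hypotheses of Lemma~\ref{lem:stabilityL2loc} are in place.

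Finally, I would invoke Lemma~\ref{lem:stabilityL2loc} to extract a subsequential limit $\bar\theta$ solving~\eqref{eq:basiclinearpde} with drift $v$, satisfying the local energy inequality, and attaining $\theta_0$ in $L^2_\loc$ in the sense of~\eqref{eq:attaininitialdata}. The uniform $L^\infty$ bound passes to $\bar\theta$ by lower semicontinuity, and the $k$-uniform H\"older and $C^{1,\alpha_0}_x$ estimates from Lemmas~\ref{lem:Holdercontinuity} and~\ref{lem:higherreg} place $\bar\theta$ inside the uniqueness class of Lemma~\ref{lem:Linftytheory}; hence $\bar\theta = \theta$, yielding the desired $L^2_\loc$ convergence. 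The main obstacle is the bookkeeping that simultaneously enforces strong $L^2_\loc$ convergence of $\theta_0^{(k)}$, sufficient smoothness of $v^{(k)}$ for the $L^2$ theory to apply to $\theta^{(k)}$, and weak-$\ast$ convergence of $v^{(k)}$ compatible with Lemma~\ref{lem:stabilityL2loc}; none of these is individually difficult, but they must be reconciled within a single approximation scheme.
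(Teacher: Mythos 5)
Your proposal is correct and follows essentially the same route as the paper: the $p=2$ case is obtained by feeding the approximation scheme from the existence part of Lemma~\ref{lem:Linftytheory} (smooth truncated data, mollified drifts with uniform bounds~\eqref{eq:driftB},~\eqref{eq:driftN},~\eqref{eq:driftV}) into the stability Lemma~\ref{lem:stabilityL2loc}, identifying the limit with $\theta$ via the uniqueness class, and the case $p \neq 2$ follows from the $L^\infty$ bound by interpolation (and H\"older on compact sets). Your write-up simply makes explicit the bookkeeping that the paper leaves implicit.
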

\begin{proof}
The $p=2$ case follows from Lemma~\ref{lem:stabilityL2loc} (Stability) and the approximation procedure in the existence part of Lemma~\ref{lem:Linftytheory}. The $p > 2$ case follows from interpolation.
\end{proof}

We now consider solutions with initial data belonging to $X_q$, where $q > 2$. These results are corollaries of Lemmas~\ref{lem:aprioriestimates} and~\ref{lem:stabilityL2loc}.

\begin{corollary}[\emph{A priori} estimates in $X_q$]\label{cor:highernorms}
Let $\theta$ be the unique global solution from~Lemma~\ref{lem:Linftytheory}. Let $q > 2$.  
Then $|\theta|^{q/2}$ satisfies the \emph{a priori} estimates in Lemma~\ref{lem:aprioriestimates}.
\end{corollary}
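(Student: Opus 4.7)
The plan is to apply Lemma~\ref{lem:aprioriestimates} directly to $\eta := |\theta|^{q/2}$ (with the same drift $v$), so the only nontrivial task is to establish the local energy inequality \eqref{eq:linearinequality} for $\eta$.

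I would work at the level of the smooth approximations $\theta^{(k)}$ from the existence part of Lemma~\ref{lem:Linftytheory}, which have data $\theta_0^{(k)} \in L^1 \cap L^\infty$ and mollified drifts $v^{(k)} \in L^\infty_t C^{1/2}_x$. For these classical solutions, multiplying $\p_t \theta^{(k)} + v^{(k)} \cdot \nabla \theta^{(k)} + \Lambda \theta^{(k)} = 0$ by $|\theta^{(k)}|^{q-2}\theta^{(k)}$ yields the pointwise identity
\begin{equation*}
	\p_t |\theta^{(k)}|^q + \div\bigl(v^{(k)}|\theta^{(k)}|^q\bigr) + q\,|\theta^{(k)}|^{q-2}\theta^{(k)}\, \Lambda \theta^{(k)} = 0.
\end{equation*}
The C\'ordoba--C\'ordoba pointwise inequality applied to the convex function $\Phi(s) = |s|^{q/2}$ (which uses $q \geq 2$) gives $\tfrac{q}{2}|\theta^{(k)}|^{q/2-2}\theta^{(k)}\, \Lambda \theta^{(k)} \geq \Lambda|\theta^{(k)}|^{q/2}$ pointwise; multiplying by the nonnegative factor $\tfrac{2}{q}|\theta^{(k)}|^{q/2}$ produces
\begin{equation*}
	q\,|\theta^{(k)}|^{q-2}\theta^{(k)}\, \Lambda \theta^{(k)} \geq 2\,|\theta^{(k)}|^{q/2}\, \Lambda |\theta^{(k)}|^{q/2}.
\end{equation*}
Combined, the two displays give exactly \eqref{eq:linearinequality} for $\eta^{(k)} := |\theta^{(k)}|^{q/2}$ with drift $v^{(k)}$.

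The remaining hypotheses of Lemma~\ref{lem:aprioriestimates} are routine for the approximations: the drifts satisfy $\|v^{(k)}\|_{L^\infty_t (X_p)_x} \leq V$ uniformly in $k$; the data obey $\|\eta^{(k)}_0\|_{X_2} \leq \|\theta_0\|_{L^\infty}^{q/2}$; qualitative membership $\eta^{(k)} \in A_T \cap E_T$ holds by smoothness and compact support of $\theta_0^{(k)}$; and \eqref{eq:attaininitialdata} follows from the $C([0,T];L^2(\R^n))$ continuity already established for $\theta^{(k)}$. Lemma~\ref{lem:aprioriestimates} therefore supplies uniform-in-$k$ bounds
\begin{equation*}
	\|\eta^{(k)}\|_{A_{T_0}} + \|\eta^{(k)}\|_{E_{T_0}} \les_p \|\theta_0\|_{L^\infty}^{q/2},
\end{equation*}
together with the corresponding rescalings for every $T \geq T_0 = T_0(p,V)$.

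Finally, I would pass to the limit $k \to \infty$. The continuity part of Lemma~\ref{lem:Linftytheory} gives $\theta^{(k)} \to \theta$ in $L^\infty_t C^{1,\beta}_x(K)$ on every compact $K \subset \R^n \times (0,1]$, so $\eta^{(k)} \to \eta$ locally uniformly there. Lower semi-continuity of the $A_T$ and $E_T$ seminorms under weak-$\ast$ convergence then transfers the uniform bounds from $\eta^{(k)}$ to $\eta$. The only real obstacle is producing the pointwise C\'ordoba--C\'ordoba inequality in the precise nonlinear form above, which is why I perform the computation on smooth approximations; after that, the result reduces to a direct invocation of Lemma~\ref{lem:aprioriestimates} and a standard compactness limit.
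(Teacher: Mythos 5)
Your central computation is exactly the one the paper uses: multiply the equation by $q|\theta|^{q-2}\theta$ and invoke the pointwise inequality $q|\theta|^{q-2}\theta\,\Lambda\theta \geq 2|\theta|^{q/2}\Lambda(|\theta|^{q/2})$ (the paper cites Ju; your C\'ordoba--C\'ordoba derivation is equivalent) to obtain the local energy inequality~\eqref{eq:linearinequality} for $|\theta|^{q/2}$, and then feed this into Lemma~\ref{lem:aprioriestimates}. Where you diverge is that the paper runs this computation \emph{directly on the solution $\theta$ of Lemma~\ref{lem:Linftytheory}}: by the a priori $C^{1,\alpha_0}_x$ estimates (Lemma~\ref{lem:higherreg}) the equation holds pointwise a.e., so no approximation is needed; the only qualitative issue is that $|\theta|^{q/2}\in E_T$ is not obvious, which the paper handles by applying the estimates to the time-shifted function $|\theta(\cdot,t+t_0)|^{q/2}$ and letting $t_0\to 0^+$.

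The gap in your version is the limit passage. Lower semicontinuity of the $A_T$ and $E_T$ norms only transfers the closed bounds~\eqref{eq:energyestimatelesssuitable} from $\eta^{(k)}=|\theta^{(k)}|^{q/2}$ to $\eta=|\theta|^{q/2}$; it does not give the Gr\"onwall-form estimate~\eqref{eq:energyestimatesuitableforGronwall} for $\eta$, because its right-hand side contains the solution's own norms $\int_0^T\|\eta^{(k)}\|_{A_t}^2\,dt$, and weak/local convergence gives $\liminf$ control in the wrong direction for those terms. Nor does your argument show that $\eta$ itself satisfies the distributional inequality~\eqref{eq:linearinequality} or attains its data in the sense of~\eqref{eq:attaininitialdata}, which is what the conclusion "satisfies the a priori estimates in Lemma~\ref{lem:aprioriestimates}" rests on; passing the nonlocal term $\langle\Lambda\eta^{(k)},\eta^{(k)}\phi\rangle$ and the data attainment to the limit is precisely the nontrivial content of Lemma~\ref{lem:stabilityL2loc} and Corollary~\ref{cor:lpstability}, which you invoke implicitly but do not carry out. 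This matters downstream: Corollary~\ref{lemma.bounds.nonlinear.Xp} needs the Gr\"onwall form for $|\theta|^{p/2}$ with the estimate expressed through $\||\theta_0|^{p/2}\|_{X_2}=\|\theta_0\|_{X_p}^{p/2}$ so that $T_*$ is independent of $\|\theta_0\|_{L^\infty}$ and of the drift bound $V$; your final display, written in terms of $\|\theta_0\|_{L^\infty}^{q/2}$ and a time scale $T_0(p,V)$, loses exactly this. The fix is either to argue directly on $\theta$ as the paper does (with the time-shift trick for the $E_T$ membership), or to genuinely pass the local energy inequality and data attainment to the limit and only then apply Lemma~\ref{lem:aprioriestimates} to $\eta$ itself.
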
 

\begin{proof}

We multiply the equation by $q |\theta|^{q-2} \theta$. This gives
\begin{equation}
\label{eq:insertedinto}
	\p_t |\theta|^q + q |\theta|^{q-2} \theta \Lambda \theta + v \cdot \nabla |\theta|^q = 0.
\end{equation}
We recall the pointwise inequality (see Proposition~3.3 and the proof of Lemma~3.3 in Ju's paper~\cite{jumaximumprinciple})
\begin{equation}
	\label{insertmeback}
	q |\theta|^{q-2} \theta \Lambda \theta \geq 2 |\theta|^{q/2} \Lambda (|\theta|^{q/2}).
\end{equation}
Inserting~\eqref{insertmeback} into~\eqref{eq:insertedinto}, we have
\begin{equation}
	\label{lplocalenergyineq}
	\p_t |\theta|^q + 2 |\theta|^{q/2} \Lambda (|\theta|^{q/2}) + v \cdot \nabla |\theta|^q \leq 0.
\end{equation}
In other words, $|\theta|^{q/2}$ satisfies the local energy inequality~\eqref{eq:linearinequality} in~Lemma~\ref{lem:aprioriestimates} with $|\theta|^{q/2}$ replacing $\theta$. Finally, Lemma~\ref{lem:aprioriestimates} gives the required estimates.\footnote{Technically, under the regularity assumptions of Lemma~\ref{lem:Linftytheory}, it is not obvious that $|\theta|^{q/2} \in E_T$, which is needed to apply Lemma~\ref{lem:aprioriestimates}. Rather, one may apply the estimates to the time-translated function $|\theta(x,t+t_0)|^{q/2}$, which belongs to $E_T$ for all $t_0 > 0$ under the given assumptions, and then take $t_0 \to 0^+$.}
\end{proof}
 
We can specialize these bounds from the linear equation to~\eqref{eq:sqg}.

\begin{corollary}[\emph{A priori} estimates in $X_q$ for SQG]\label{lemma.bounds.nonlinear.Xp}
Let $p > 2$ and $\th$ be a solution of~\eqref{eq:sqg} satisfying the properties in the existence part of Theorem~\ref{thrm.existence}. 
Then there exists $T_*=T_*(\|\th_0\|_{X_p},p) > 0$, \emph{not depending on $\|\th_0\|_{L^\I(\R^2)}$}, so that
\EQ{
\norm{|\theta|^{p/2}}_{A_{T_*}}^2 + 	\norm{|\theta|^{p/2}}_{E_{T_*}}^2 \leq 2  \norm{|\theta_0|^{p/2}}^2_{X_2} =2 \|\th_0\|_{X_p}^p,
}
\EQ{
	\label{x2estsigh}
\norm{\theta}_{A_{T_*}}^2 + 	\norm{\theta}_{E_{T_*}}^2 \leq 2  \norm{\theta_0}^2_{X_2}.
}
Rescaling gives that, for all $T \geq T_*$,
\EQ{
\norm{|\theta|^{p/2}}_{A_{T}^{T/T_*}}^2 + 	\norm{|\theta|^{p/2}}_{E_{T}^{T/T_*}}^2 \leq 2 \|\th_0\|_{X_p}^p,
}
\EQ{
\norm{\theta}_{A_{T}^{T/T_*}}^2 + 	\norm{\theta}_{E_{T}^{T/T_*}}^2 \leq 2 \|\th_0\|_{X_2}^2.
}
\end{corollary}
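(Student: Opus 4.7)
The strategy is to treat SQG as a linear drift-diffusion equation with self-generated drift $v = \vec{R}^\perp \theta$ and to apply Corollary~\ref{cor:highernorms} to $|\theta|^{p/2}$. The one ingredient not yet in place is quantitative control of $V := \|v\|_{L^\infty_t (X_p)_x(\R^2 \times (0,T))}$ that is \emph{independent of} $\|\theta_0\|_{L^\infty}$; this is precisely what is supplied by the symmetry assumption through Lemma~\ref{lem:riesz} (for $p > 2$ in the symmetric case), which gives $\|v(\cdot,t)\|_{X_p} \les_p \|\theta(\cdot,t)\|_{X_p}$ with constant depending only on $p$. Combining this with the identity $\|\theta\|_{X_p}^p = \||\theta|^{p/2}\|_{X_2}^2$ converts $X_p$-control of $\theta$ into $A_T$-control of $|\theta|^{p/2}$, and thereby ties the drift quantity $V$ directly to the energy quantity we are trying to estimate.

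The resulting self-referential bound is then closed by a continuity/bootstrap argument. Write $M_0 := \|\theta_0\|_{X_p}$, and note that, by the smoothness properties inherited from the approximation scheme in Theorem~\ref{thrm.existence}, the map $T \mapsto \||\theta|^{p/2}\|_{A_T}^2$ is continuous with value $M_0^p$ at $T = 0$. Define
\begin{equation*}
T_* := \sup\bigl\{\, T > 0 : \||\theta|^{p/2}\|_{A_T}^2 \leq 2 M_0^p \,\bigr\}.
\end{equation*}
On $[0,T_*]$ the Riesz bound then gives $V \leq c_p\, 2^{1/p} M_0$, independently of $\|\theta_0\|_{L^\infty}$. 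Applying the refined \emph{a priori} estimate \eqref{eq:importantlowersemicontinuityfact} of Remark~\ref{rmk:furtherobservations}(2) to $|\theta|^{p/2}$ (through Corollary~\ref{cor:highernorms}), we obtain a bound of the form
\begin{equation*}
\||\theta|^{p/2}\|_{A_T}^2 \leq M_0^p + C_p \bigl( T^{1/2} + V\, T^{1-\tau} \bigr) M_0^p, \quad \tau = \tau(p) \in (0,1).
\end{equation*}
Choosing $T_0 = T_0(p, M_0) > 0$ small enough that $C_p(T_0^{1/2} + V T_0^{1-\tau}) < 1$ strictly improves the bootstrap hypothesis and, by continuity, forces $T_* \geq T_0$; the matching $E_{T_*}$ bound is read off from the same local energy identity.

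The $X_2$ estimate \eqref{x2estsigh} then follows by applying Lemma~\ref{lem:aprioriestimates} directly to $\theta$ with the drift $v$ whose $X_p$-norm is already controlled on $[0,T_*]$, at worst shrinking $T_*$ by a further $p$-dependent factor. The rescaled statements for $T \geq T_*$ are immediate consequences of the scaling invariance of \eqref{eq:sqg} together with the homogeneity of the weighted norms, exactly as in the final rescaling step of Lemma~\ref{lem:aprioriestimates}. The principal obstacle is extracting the sharp constant $2$ on the right-hand sides: the bound \eqref{eq:energyestimatelesssuitable} alone yields only an implicit $p$-dependent constant, which is why the refinement \eqref{eq:importantlowersemicontinuityfact} — in which the initial data enters with multiplicative constant $1$ modulo error terms that vanish as $T \to 0^+$ — is the appropriate tool, and why the argument must be organized as a continuity/bootstrap rather than a one-shot application of the linear estimate.
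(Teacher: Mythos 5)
Your proof is essentially sound and captures the key mechanism of the paper's argument — exploit the symmetry to deduce $\|\vec{R}^\perp\theta\|_{X_p} \lesssim_p \|\theta\|_{X_p}$ via Lemma~\ref{lem:riesz}, convert this to $\||\theta|^{p/2}\|_{X_2}$ through the identity $\|\theta\|_{X_p}^p=\||\theta|^{p/2}\|_{X_2}^2$, and close a self-referential inequality for $\||\theta|^{p/2}\|_{A_T}$. Where you diverge is the closure step. The paper's proof is a one-shot application of the nonlinear integral inequality~\eqref{eq:energyestimatesuitableforGronwall} from Lemma~\ref{lem:aprioriestimates}, fed into a nonlinear Gr\"onwall lemma (their cite \cite{BT8}), which immediately yields $T_*=T_*(\|\theta_0\|_{X_p},p)$ with the advertised bound; the $X_2$ estimate~\eqref{x2estsigh} then follows for free by another application of Lemma~\ref{lem:aprioriestimates} with $V$ now quantified. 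You instead organize the argument as a continuity/bootstrap built on the refined estimate~\eqref{eq:importantlowersemicontinuityfact} from Remark~\ref{rmk:furtherobservations}(2). This works, but it is a heavier route: the estimate~\eqref{eq:importantlowersemicontinuityfact} is itself derived by feeding \eqref{eq:energyestimatelesssuitable} back into the computations, so it presupposes the very Gr\"onwall-type bound the paper applies directly; your bootstrap hypothesis is what renders this non-circular (by pinning down $V$ and hence $T_0(p,V)$), which is fine but adds a layer.

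One point to reconsider: your final paragraph asserts that a one-shot argument \emph{cannot} deliver the constant $2$ and that the bootstrap is forced. This is not quite right. The nonlinear Gr\"onwall lemma the paper invokes is itself a ``one-shot'' tool and is claimed to produce the factor $2$ directly, so the bootstrap is a valid alternative but not a necessity. Moreover, the sharpness you attribute to \eqref{eq:importantlowersemicontinuityfact} (``initial data enters with constant $1$'') is at the level of $\int |\theta_0|^2 \phi_R\,dx$, not of $\|\theta_0\|_{X_2}^2$. Passing from the localized quantity to the $X_2$ norm costs a factor depending on the cut-off profile (roughly $(4/3)^n$ for the paper's choice, which happens to be $<2$ for $n=2$, so your bootstrap still closes, but not with constant exactly $1$). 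If you want the clean factor $2$ in the conclusion, you should either keep track of this normalization explicitly and shrink $T_0'$ to absorb the slack, or simply quote the Gr\"onwall lemma the way the authors do.
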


Again, we emphasize that the above bounds are independent of the $L^\infty$ norm.


\begin{proof}
By Lemma \ref{lem:stabilityL2loc}, we have
\begin{equation}
\begin{aligned}
	\norm{|\theta|^{p/2}}_{A_T}^2  &\les \norm{|\theta_0|^{p/2}}^2_{X_2}  +\int_0^T \||\theta|^{p/2}\|_{A_t}^2\,dt+  C_p \int_0^T \|\vec{R}^\perp \th \|_{L^\I (0,t;X_p)}^{2/(1-\tau)} \||\theta|^{p/2}\|_{A_t}^2 \,dt \\
		 &\les \norm{|\theta_0|^{p/2}}^2_{X_2}  +\int_0^T \||\theta|^{p/2}\|_{A_t}^2\,dt+  C_p  \int_0^T \|\th\|_{L^\I (0,t;X_p)}^{2/(1-\tau)} \||\theta|^{p/2}\|_{A_t}^2 \,dt \\
	 &\les  \norm{|\theta_0|^{p/2}}^2_{X_2}   +\int_0^T \||\theta|^{p/2}\|_{A_t}^2\,dt+  C_p  \int_0^T \||\th|^{p/2}\|_{A_t}^{2+4/(p(1-\tau))}   \,dt,
	\end{aligned}
\end{equation}
where we used boundedness of the Riesz transforms on $X_p$ in the symmetric case. We now use a standard Gr\"onwall inequality~\cite{BT8} to obtain a time $T_*=T_*( |\theta_0|^{p/2} , p)$ so that the advertised bound holds. The estimate~\eqref{x2estsigh} then follows from Lemma~\ref{lem:aprioriestimates}.
\end{proof}

Finally, we present an analogue of Lemma~\ref{lem:stabilityL2loc} (Stability) at the level of $|\theta|^{q/2}$, and which may be regarded as a corollary of the proof of Lemma~\ref{lem:stabilityL2loc}.

\begin{corollary}[$X_q$ stability]
\label{cor:lpstability}
Let the assumptions of Lemma~\ref{lem:stabilityL2loc} be satisfied. Let $q > 2$. Further assume that $\theta_0^{(k)} \to \theta_0$ in $L^q_\loc(\R^n)$, $|\theta|^{q/2} \in A_T \cap E_T$ for every $T > 0$, the initial data is attained in the sense that $\theta^{(k)}(\cdot,t) \to \theta_0^{(k)}$ in $L^q_\loc(\R^n)$ as $t \to 0^+$, and $|\theta^{(k)}|^{q/2}$ satisfies the local energy inequality~\eqref{lplocalenergyineq} in the sense of distributions. Then $|\theta|^{q/2} \in A_T \cap E_T$ for every $T>0$, $\theta(\cdot,t) \to \theta_0$ in $L^q_\loc(\R^n)$ as $t \to 0^+$, and $|\theta|^{q/2}$ satisfies the local energy inequality~\eqref{lplocalenergyineq} in the sense of distributions. Hence, $|\theta|^{q/2}$ satisfies the \emph{a priori} estimates in Lemma~\ref{lem:aprioriestimates}.
\end{corollary}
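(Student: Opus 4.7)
The plan is to rerun the proof of Lemma~\ref{lem:stabilityL2loc} with $w^{(k)} := |\theta^{(k)}|^{q/2}$ in place of $\theta^{(k)}$ throughout. The hypotheses are tailored so this substitution is legitimate: $w^{(k)}$ satisfies the local energy inequality~\eqref{lplocalenergyineq}, which is structurally identical to~\eqref{eq:linearinequality} with the same drift $v^{(k)}$; the initial data converge, $|\theta_0^{(k)}|^{q/2} \to |\theta_0|^{q/2}$ in $L^2_\loc$, which is equivalent to the assumed $\theta_0^{(k)} \to \theta_0$ in $L^q_\loc$; and $w^{(k)}$ attains its initial data in $L^2_\loc$ by the same equivalence.

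First, applying Lemma~\ref{lem:aprioriestimates} directly to $w^{(k)}$ yields uniform bounds
\[
\norm{w^{(k)}}_{A_{T_0}} + \norm{w^{(k)}}_{E_{T_0}} \lesssim_p \norm{|\theta_0^{(k)}|^{q/2}}_{X_2} = \norm{\theta_0^{(k)}}_{X_q}^{q/2}
\]
for a time $T_0 = T_0(p,V)$, extended to arbitrary $T \geq T_0$ by the scaling argument of that lemma. The uniformity in $k$ follows from the hypothesis that $\sup_k \norm{\theta_0^{(k)}}_{X_q} < \infty$ (implicit in the assumption that $|\theta^{(k)}|^{q/2} \in A_T \cap E_T$ with initial-data attainment). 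Next, rather than reproving Aubin--Lions for $w^{(k)}$ directly, we exploit the already-established strong $L^2_\loc$ convergence $\theta^{(k)} \to \theta$ from Lemma~\ref{lem:stabilityL2loc}: passing to a further subsequence gives pointwise a.e. convergence, and hence $w^{(k)} \to |\theta|^{q/2} =: w$ pointwise a.e. The uniform $A_T \cap E_T$ bound on $w^{(k)}$ combined with the Sobolev embedding~\eqref{eq:sobolevembeddingcontinued} yields uniform bounds on $w^{(k)}$ in $L^2_t L^\kappa_x(K)$ for any compact $K$, with $\kappa > 2$; by Vitali's convergence theorem, $w^{(k)} \to w$ strongly in $L^2_\loc(\R^n \times [0,T])$. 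Weak-$\ast$ lower semi-continuity then gives $w \in A_T \cap E_T$ with the expected bounds, extending by scaling to all $T > 0$.

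We next pass to the limit in the local energy inequality for $w^{(k)}$. The time-derivative term and the drift term converge thanks to the strong $L^2_\loc$ convergence of $w^{(k)}$ together with $v^{(k)} \wstar v$ in $L^\infty_t (X_p)_x$ (with $p > n$ providing the needed room via Hölder). For the diffusive term $\la \Lambda w^{(k)}, w^{(k)} \phi \ra$, we apply the same four-term commutator decomposition $I_1, I_2, J_1, J_2$ that appears in the proof of Lemma~\ref{lem:stabilityL2loc}, verbatim with $w^{(k)}$ replacing $\theta^{(k)}$: $I_1, I_2$ are handled by weak convergence of $\Lambda^{1/2} w^{(k)}$ in $L^2_\loc$ combined with the commutator bound~\eqref{eq:commutatorest}, while $J_1, J_2$ are controlled by tail estimates using uniform $X_2$ bounds. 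Weak lower semi-continuity handles the principal part $\la \Lambda^{1/2} w, (\Lambda^{1/2} w) \phi\ra$. Finally, for initial-data attainment, we apply the upper-semi-continuity estimate~\eqref{eq:importantlowersemicontinuityfact} (valid for $w^{(k)}$ by the same derivation as for $\theta^{(k)}$), pass $k \to \infty$ using strong $L^2_\loc$ convergence on both sides, and then take $t \to 0^+$; combined with weak convergence of $w(\cdot,t)$ to $w_0$ at the initial time (a consequence of the bound on $\p_t w^2$ in negative Sobolev spaces), this gives $w(\cdot,t) \to |\theta_0|^{q/2}$ in $L^2_\loc$, equivalent to $\theta(\cdot,t) \to \theta_0$ in $L^q_\loc$.

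The main work is bookkeeping: verifying that each step in the proof of Lemma~\ref{lem:stabilityL2loc} is insensitive to the substitution $\theta^{(k)} \leftrightarrow w^{(k)}$. The mildly delicate point, which motivates bypassing a direct Aubin--Lions bound on $\p_t w^{(k)}$, is the upgrade of a.e. convergence to strong $L^2_\loc$ convergence of $w^{(k)}$; this is achieved cleanly via Vitali, using the uniform higher integrability furnished by the $E_T$ bound and Sobolev embedding. No new analytic ideas beyond those already deployed in Lemma~\ref{lem:stabilityL2loc} and Corollary~\ref{cor:highernorms} are required.
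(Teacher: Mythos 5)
Your overall strategy matches the paper's, but the justification of initial-data attainment has a genuine gap. You assert that weak convergence $w(\cdot,t) \wto w_0$ in $L^2_\loc$ as $t \to 0^+$ (with $w = |\theta|^{q/2}$) follows from ``the bound on $\p_t w^2$ in negative Sobolev spaces.'' This is precisely what is \emph{not} available here: $w^2 = |\theta|^q$ satisfies only the one-sided inequality~\eqref{lplocalenergyineq}, so $\p_t |\theta|^q$ is bounded above by a distribution but is not bounded in any $L^2_t H^{-s}_x$ space. (This is exactly why the paper also cannot run the Aubin--Lions step at the level of $w$.) Moreover, even if such a two-sided bound were available, it would give weak time-continuity of $|\theta|^q$, not of $|\theta|^{q/2}$ in $L^2$ --- the map $f \mapsto |f|^{1/2}$ is nonlinear and does not carry weak continuity in $L^1$ to weak continuity in $L^2$.

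The paper circumvents this by working with $\theta$, not $w$, in the weak-continuity part of the argument. Since $\theta$ solves an \emph{equality}, the bound on $\p_t \theta$ used in Lemma~\ref{lem:stabilityL2loc} gives $\theta \in C_{\rm wk}([0,T];L^2(K))$; combined with the $L^\infty_t L^q_x(K)$ bound (from $w \in A_T$) and a density argument, this upgrades to $\theta(\cdot,t) \wto \theta_0$ in $L^q(K)$. Meanwhile the $\limsup$ estimate from~\eqref{eq:importantlowersemicontinuityfact} applied to $w$, together with lower semi-continuity from the weak convergence, gives norm convergence $\norm{\theta(\cdot,t)}_{L^q(K)} \to \norm{\theta_0}_{L^q(K)}$. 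Since $L^q(K)$ is uniformly convex, weak convergence plus norm convergence forces strong convergence in $L^q(K)$, i.e. $w(\cdot,t) \to w_0$ in $L^2(K)$. Your Vitali argument for upgrading a.e.~convergence of $w^{(k)}$ to strong $L^2_\loc$ convergence is a fine alternative to the paper's stated interpolation, and the commutator/limit analysis for the diffusive term transfers as you say; the missing ingredient is replacing your claimed time-derivative bound for $w^2$ with the uniform-convexity step just described.
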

\begin{proof}
The stability arguments in the proof of Lemma~\ref{lem:stabilityL2loc} hold at the level of $|\theta|^{q/2}$ with minor adjustments: (1) One does not estimate the time derivative to apply the Aubin-Lions lemma, since we are exploiting an inequality for $|\theta|^q$ rather than an equality.  Rather, one uses the known convergence $\theta^{(k)} \to \theta$ in $L^2_\loc(\R^n \times \R_+)$ and interpolates with the \emph{a priori} estimates of $\theta$ and $|\theta|^{q/2}$ in $A_T$ and $E_T$ as necessary. This allows to skip Step 1. (Step 2 is also skipped because there is no equation to verify for $|\theta|^{q/2}$).) (2) For convergence to the initial data, we recall that $L^q(K)$, $K \subset \R^n$ compact, is a uniformly convex Banach space. Hence, when (i) $\norm{\theta(\cdot,t)}_{L^q(K)} \to \norm{\theta_0}_{L^q(K)}$ and (ii) $\theta(\cdot,t) \wto \theta_0$ in $L^q(K)$ as $t \to 0^+$, then $\theta(\cdot,t) \to \theta_0$ in $L^q(K)$ as $t \to 0^+$. This appears in the following way: (i) follows from the arguments in the last paragraph of Lemma~\ref{lem:stabilityL2loc}, and (ii) follows from $\theta \in C_{\rm wk}([0,T];L^2(K))$ and a density argument in $L^\infty_t L^q_x (K \times (0,T))$.
\end{proof}

\section{Solutions with unbounded data}
\label{sec:unboundedsolutions}

In this section, we prove Theorem~\ref{thrm.existence}.

\begin{proof}[Proof of Theorem~\ref{thrm.existence}] 
We approximate the initial data by bounded truncations that preserve the symmetries of the original data. 
Let $\th_0\in X_p$ be given and let $\th_0^{(m)}$ be the truncation of $\th_0$, i.e.~$\th_0^{(m)}=m \operatorname{sgn} \th_0$ if $|\th_0|>m$ and $\th_0^{(m)}=\th_0$ otherwise. Clearly, $\theta^{(m)} \to \theta_0$ in $L^p_\loc(\R^2)$, and the truncation does not affect symmetries or self-similarity. By Theorem~\ref{thm:boundedsolutions}, we have global existence of a bounded, smooth, symmetric solution $\th^{(m)}$ for each $\th_0^{(m)}$. By Corollary~\ref{cor:highernorms} and Corollary~\ref{lemma.bounds.nonlinear.Xp}, 
we have uniform-in-$m$ bounds on $\th^{(m)}$ and $|\th^{(m)}|^{p/2}$ in $A_{T}^{T/T_*}\cap E_T^{T/T_*}$, where $T_*$ can be taken to depend only on $\|\th_0\|_{X_p}$ and $p$. By increasing $T \geq T_*$, these estimates extend to arbitrarily large times. Hence, we may apply the linear stability results of Lemma~\ref{lem:stabilityL2loc} and Corollary~\ref{cor:lpstability} to pass to the limit. We must further justify that $v$, defined to be the weak-$\ast$ limit of $v^{(m)} := \vec{R}^\perp \theta^{(m)}$ in $L^\infty_t (X_p)_x(\R^2 \times (0,T))$ for all $T > 0$, satisfies $v = \vec{R}^\perp \theta$. This follows from the convergence $\theta^{(m)} \to \theta$ in $L^2_\loc(\R^n \times [0,+\infty))$ and the decay properties of the Riesz transform in Remark~\ref{rmk:riesztransformsandconvergence}. The existence proof is complete.  Finally, the claimed estimates were the subject of Corollary~\ref{lemma.bounds.nonlinear.Xp}.  
\end{proof}

\subsubsection*{Acknowledgments}
We sincerely thank the three referees for their valuable work reviewing our manuscript. DA was supported by the NDSEG Fellowship and NSF Postdoctoral Fellowship Grant No. 2002023. ZB was supported in part by the Simons Foundation Grant No. 635438.

\bibliography{bibliography}
\bibliographystyle{amsplain}
\end{document}